\title[Carath\'eodory distance-preserving maps]
{Carath\'eodory distance-preserving maps between bounded symmetric domains}
\author{Bas Lemmens}
\thanks{School of Engineering, Mathematics and Physics,
University of Kent, Canterbury CT2 7NX, UK (\texttt{B.Lemmens@kent.ac.uk})}
\author{Cormac Walsh}
\thanks{Inria and CMAP, Ecole Polytechnique, CNRS, 91128, Palaiseau, France
(\texttt{cormac.walsh@inria.fr}).
Cormac Walsh was partially funded by ANR-23-CE40-0012-03.}
\subjclass[2020]{Primary 32M15; Secondary 53C35, 32F45.}
\date{\today}
\newcommand\after{\circ}
\newcommand\intersection{\cap} 
\newcommand\R{\mathbb{R}}
\newcommand\C{\mathbb{C}}
\newcommand\N{\mathbb{N}}
\newcommand\gromprod[3]{( #1, #2 )_{#3}}
\newcommand\triprod[3]{\{ #1, #2, #3 \}}
\newcommand\rank{\operatorname{rank}}
\newcommand\smallsquare{\mathrel{\scriptstyle \square}}
\newcommand\indexfn{J}
\newcommand\chainlinked{\sim}
\newcommand\id{\operatorname{Id}}
\newcommand\real{\operatorname{\mathrm{Re}}}
\newtheorem{theorem}{Theorem}[section]
\newtheorem{Definition}[theorem]{Definition}
\newtheorem{lemma}[theorem]{Lemma}
\newtheorem{corollary}[theorem]{Corollary}
\newtheorem{proposition}[theorem]{Proposition}
\newenvironment{definition}{\begin{Definition}\rm}{\end{Definition}}
\begin{document}

\begin{abstract}
We study the rigidity of maps between bounded symmetric domains
that preserve the Carath\'eodory/Kobayashi distance. 
We show that such maps are only possible when the rank of the co-domain
is at least as great as that of the domain.
When the ranks are equal, and the domain is irreducible, we prove that
the map is either holomorphic or antiholomorphic.
In the holomorphic case, we show that the map is in fact a triple homomorphism,
under the additional assumption that the origin is mapped to the origin.
We exploit the large-scale geometry of the Carath\'eodory distance and use the horocompactification and Gromov product to obtain these results without requiring any smoothness assumptions on the maps.
\end{abstract}

\maketitle

\newcommand\opencone{C}
\newcommand\closedcone{\closure C}
\newcommand\statespace{K}
\newcommand\filter{F}
\newcommand\filters{\mathcal{F}}

\section{Introduction}

An interesting problem in the theory of several complex variables
is to find conditions under which every map $\phi\colon D\to D'$
between two complex manifolds $D\subseteq \C^m$ and $D'\subseteq \C^n$
preserving the Carath\'eodory or Kobayashi distance is either holomorphic
or anti-holomorphic.
This problem has been considered in various settings,
often under the assumption that $\phi$ is $C^1$-smooth or $D=D'$;
see~\cite{CZ, Ed2, GS1, jarnicki_pflug_book, SV, Zw}.
Arguably, the most general known case to date is due to Antonakoudis~\cite{An},
who showed that it holds for maps between complete disc rigid domains
in $\C^n$; see also \cite{Ed1}.
It is generally believed \cite[Conjecture 5.2]{GS2} that as long as the domains
$D$ and $D'$ are not biholomorphic to a Cartesian product of domains,
the distance-preserving map $\phi\colon D\to D'$  is either holomorphic
or anti-holomorphic. 

Here, we study the case of
\emph{Hermitian symmetric spaces of non-compact type},
or equivalently \emph{bounded symmetric domains}.
In these spaces, the Carath\'eodory and Kobayashi distances coincide.
The work by Antonakoudis \cite[Theorem 1.3]{An} implies that
Carath\'eodory distance-preserving maps between rank one
bounded symmetric domains are either holomorphic or anti-holomorphic.
Distance-preserving maps between higher rank bounded symmetric domains
were studied by Kim and Seo~\cite{kim2022holomorphicity} under the additional
assumption that the map is $C^1$.
They showed that if the domain is irreducible, the map is $C^1$-smooth,
and the rank of the co-domain is no greater than the rank of the domain,
then the map is either holomorphic or anti-holomorphic.
There are examples where the irreducibility or rank conditions
do not hold and the conclusion fails.

We generalise these results to distance-preserving maps between bounded
symmetric domains of arbitrary rank,
without any smoothness condition on the map.
Such a condition was needed
in~\cite{kim2022holomorphicity} to apply techniques from differential geometry.
Here, instead we exploit the large-scale geometry of bounded symmetric domains
with their Carath\'eodory distance.
Keys tools are the horofunction boundary of this space,
which was described in detail in
Chu--Cueto-Avellaneda--Lemmens \cite{chu_cueto_avellaneda_lemmens},
and the Gromov product.

Recall that a \emph{Hermitian symmetric space} is a Riemannian symmetric space
with a compatible complex structure. It was shown by Harish-Chandra that every
non-compact type Hermitian symmetric space
can be realised as a \emph{bounded symmetric domain}.
These are the bounded domains in $\C^n$ such that every point in the domain
is an isolated fixed point of a biholomorphic involution from the domain
to itself. It was later shown \cite{Kaup1,Koecher} that every such domain
arises as the open unit ball of a \emph{JB*-triple}.
These triples provide a powerful tool for studying bounded symmetric domains,
one we will use extensively in this paper.

Our first result is a sharpening and  extension to the non-smooth case
of \cite[Theorem 1.1]{kim2022holomorphicity}.
In particular, it shows that there is no distance-preserving map from
a bounded symmetric domain to another one of lower rank.

\begin{theorem}
\label{thm:rank_inequality}
Let $D$ be a finite dimensional bounded symmetric domain with rank $r$ and genus $p$,
and  $D'$ be a finite dimensional bounded symmetric domain with rank $r'$ and genus $p'$. 
If $\phi\colon D \to D'$ is a Carath\'eodory distance-preserving map, then 
\[
r \le r'\mbox{\qquad and\qquad } rp -\dim D \leq r'p'-\dim D'.
\]
\end{theorem}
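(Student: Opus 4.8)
The plan is to work entirely with the large-scale geometry of the Carath\'eodory distance, via the horofunction boundary described in \cite{chu_cueto_avellaneda_lemmens} and the Gromov product. Recall that a bounded symmetric domain with its Carath\'eodory distance is a proper geodesic metric space, so every geodesic ray determines a Busemann point of the horofunction boundary. After composing $\phi$ with an automorphism of $D'$ I may assume that $\phi$ sends the origin $o$ of $D$ to the origin $o'$ of $D'$, and I fix these as basepoints. Since $\phi$ is distance-preserving it carries geodesics to geodesics and preserves the Gromov product, $(\phi(x),\phi(y))_{\phi(z)} = (x,y)_z$. Hence each ray $\gamma$ from $o$ is sent to a ray $\phi\after\gamma$ from $o'$, and the assignment of Busemann points $\beta\colon \xi_\gamma \mapsto \xi_{\phi\after\gamma}$ is well defined. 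Evaluating a Busemann function of $D'$ at points of $\phi(D)$ and using the isometry gives $\xi_{\phi\after\gamma}(\phi(x)) = \xi_\gamma(x)$ for all $x$, whence $\beta$ is injective; the same identity makes $\beta$ compatible with the detour cost, so it sends each part (detour-distance class) of the Busemann boundary of $D$ into a single part of that of $D'$, respecting the structure carried by the parts.

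For the inequality $r\le r'$ I would use that, by \cite{chu_cueto_avellaneda_lemmens}, the parts of the Busemann boundary are organised by the nonzero tripotents of the underlying JB*-triple, equivalently by the boundary components of $D$, with their incidence reflecting the partial order on tripotents. The rank $r$ is the length of a maximal chain of tripotents (from a minimal one to a maximal one), which is also the dimension of a maximal isometrically embedded flat $(\Delta^r,\max)$. Since $\beta$ respects the incidence relations among parts, it carries such a maximal chain to a chain of the same length in the boundary of $D'$, and therefore $r\le r'$.

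For the second inequality I would track a dimension invariant of the same boundary map. The quantity $rp-\dim D$ is intrinsic to the triple: for irreducible $D$ with root multiplicities $a,b$ one has $rp-\dim D = r + \tfrac{a}{2}r(r-1) = \dim_\C V_2(e)$, the complex dimension of the Peirce-$2$ space of a maximal tripotent $e$, equivalently the real dimension of the symmetric cone $\Omega_e$ of the Jordan algebra $V_2(e)_{\mathrm{sa}}$. In the horofunction boundary this cone governs the family of Busemann points attached to a fixed maximal tripotent, and I would show that the dimension of the corresponding stratum equals $rp-\dim D$ (after accounting for the radial normalisation built into Busemann functions). As $\beta$ is an injective, structure-preserving map of parts that cannot decrease their dimension, this stratum of $D$ injects into a part of $D'$ of dimension at most $r'p'-\dim D'$, giving $rp-\dim D \le r'p'-\dim D'$.

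The main obstacle is precisely the second step: pinning down which stratum of the Busemann boundary has dimension exactly $rp-\dim D$, and proving that $\beta$ is a topological embedding on that stratum so the dimension genuinely cannot drop. This forces one to combine the Peirce theory of JB*-triples with the explicit form of the Busemann functions from \cite{chu_cueto_avellaneda_lemmens}, to handle the contributions of the Peirce-$1$ space and of the Peirce-$0$ subdomain to each part, and to reconcile the cone $\Omega_e$ with the radial normalisation (as the rank-one case shows, a full copy of $\Omega_e$ need not survive into a single part). I would also reduce to the irreducible case by decomposing $D$ into factors and checking that both the chain and dimension invariants are additive across factors, and verify that no non-Busemann horofunctions interfere with the count. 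By contrast, the injectivity and Gromov-product/detour compatibility of $\beta$ are routine consequences of the isometry; the genuine work is the geometric identification of the invariants and the dimension-monotonicity of $\beta$ on parts.
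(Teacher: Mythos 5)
Your overall strategy coincides with the paper's: pass to the induced map on Busemann points, use the detour cost to transport the partial order on tripotents and deduce $r\le r'$ from maximal chains, and then extract the second inequality from a dimension count on the parts. The first half is essentially complete: the injectivity and exact preservation of the detour cost that you derive from $\xi_{\phi\after\gamma}(\phi(x))=\xi_\gamma(x)$ do give that $c\le e$ if and only if $\phi(c)\le\phi(e)$, and combined with the fact that a maximal chain of tripotents has exactly $r$ elements (Lemma~\ref{lem:sequence_of_tripotents}) this yields $r\le r'$. Be aware, though, that even here you are using both directions of the equivalence ``$H(\xi,\eta)<\infty \Leftrightarrow c\le e$''; only the forward implication is in \cite{chu_cueto_avellaneda_lemmens}, and the backward implication (needed to know that the image of a chain is again a chain) is proved in this paper as Lemma~\ref{lem:relate_order_and_finiteness_of_detour} via an explicit computation of the detour cost.

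The genuine gap is the step you yourself flag as ``the main obstacle'': identifying the metric structure and dimension of a part, and showing the dimension cannot drop. Your proposal stops at naming the invariant $rp-\dim D=\dim_{\C}V_2(e)$ and hoping that ``$\beta$ cannot decrease the dimension of parts''; as stated this is not an argument, since an abstract injection between parts carries no dimension information. The paper closes this by computing the detour cost exactly, $H(\xi,\eta)=\tfrac12\log M(b/a)$ with $a=\sum\lambda_i^2e_i$ and $b=\sum\mu_i^2c_i$ (Lemma~\ref{lem:relate_order_and_finiteness_of_detour}), whence the symmetrised detour metric on the part of $e$ is the Hilbert metric and the part is isometric to the cross-section $(\Sigma_e,d_H)$ of the symmetric cone, of real dimension $\dim_\C V_2(e)-1$ (Theorem~\ref{Hilbert}). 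This also resolves your worry about the radial normalisation: the normalisation removes one dimension on \emph{both} sides, so it cancels. The induced map between parts is then a distance-preserving, hence continuous and injective, map $(\Sigma_e,d_H)\to(\Sigma_c,d_H)$ between manifolds, and invariance of domain gives $\dim V_2(e)\le\dim V_2(c)\le\dim V_2(c')$ for a maximal tripotent $c'$ of $V'$, which is exactly $rp-\dim D\le r'p'-\dim D'$. Without the explicit detour-cost formula and the resulting isometric identification of the parts, your dimension-monotonicity claim remains unproved. (Two smaller points: the reduction to the irreducible case you propose is unnecessary, as the argument works directly for reducible domains; and since by Theorem~\ref{thm:busemann_points_of_bsd} every horofunction here is a Busemann point reached by an actual geodesic, your restriction to rays loses nothing.)
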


When the rank of the domain and co-domain are the same, one can say more.
Our next theorem states that, in this case, a distance-preserving map
respects factors, that is, each irreducible factor of the domain
only influences one irreducible factor of the co-domain.

If $D_1, \dots, D_m$ are sets and $I := \{i_1, \dots, i_k\}$
is a subset of $\{1, \dots, m\}$,
we use the notation $D_I$ to denote the Cartesian product
$D_{i_1} \times \dots \times D_{i_k}$, where the $i_j$ are taken in increasing
order.
Also, if $x := (x_1, \dots, x_m) \in D_1 \times \dots \times D_m$,
then $x_I : = (x_{i_1}, \dots, x_{i_k}) \in D_I$.
If $\indexfn$ is a map, then $\indexfn^{-1}(x)$ denotes the preimage
of a point $x$.

\begin{theorem}
\label{thm:respects_product}
Let $\phi\colon D \to D'$ be a Carath\'eodory distance-preserving map
between two finite-dimensional bounded symmetric domains of equal rank.
Let these domains be expressed in terms of their irreducible factors
as follows:
\[
D = D_1 \times \dots \times D_m \mbox{\quad and\quad }
D' = D'_1 \times \dots \times D'_n.
\]
Then, there exists a surjective map
$\indexfn\colon \{1, \dots, m\}\to \{1, \dots, n\}$,
and distance-preserving maps
$\phi_k$; $k\in\{1, \dots, n\}$,
with $\phi_k \colon D_{\indexfn^{-1}(k)} \to D'_k$ such that
\begin{align*}
\phi(x_1, \dots, x_m)
  = \big(\phi_1(x_{\indexfn^{-1}(1)}), \dots, \phi_n(x_{\indexfn^{-1}(n)})\big).
\end{align*}
\end{theorem}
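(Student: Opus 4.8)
My starting point is that on any Cartesian product the Carath\'eodory distance is the maximum of the factor distances, so that $D$ and $D'$ are $\ell^\infty$-products and their maximal flats are precisely the maximal polydiscs $\Delta^r\cong\Delta^{r'}$ carrying the sup-metric. The hypothesis $\rank D=\rank D'$ is what I would lean on first: since $\phi$ is distance-preserving it sends a maximal polydisc $F$ of $D$ isometrically onto an isometric copy of $\Delta^{r}$ in $D'$, and as $r=r'$ this image $\phi(F)$ is again a maximal polydisc. Because the $\ell^\infty$-decomposition of $\Delta^r$ into its disc-coordinates is essentially unique, $\phi$ therefore carries the coordinate (``singular'') directions of each maximal polydisc of $D$ bijectively onto those of its image, and hence induces a map on the part of the horofunction boundary described in \cite{chu_cueto_avellaneda_lemmens} consisting of these singular directions.

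Next I would detect the irreducible factors intrinsically from the large-scale geometry. For two singular directions $\xi,\eta$ I use the Gromov product $\gromprod{\xi}{\eta}{o}$ and declare $\xi\chainlinked\eta$ when they can be joined by a finite chain of singular directions, consecutive members of which have finite Gromov product. The two facts I need are: (i) the singular directions of an irreducible factor form a single $\chainlinked$-class---even though the coordinate directions of one polydisc are pairwise independent (infinite Gromov product), irreducibility supplies enough intermediate directions of finite Gromov product to chain them together; and (ii) in an $\ell^\infty$-product no chain crosses between factors, since directions escaping to infinity in different factors are asymptotically independent. Granting these, the $\chainlinked$-classes of singular directions are exactly the irreducible factors. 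Since an isometric embedding preserves the Gromov product exactly, $\gromprod{\phi(x)}{\phi(y)}{\phi(o)}=\gromprod{x}{y}{o}$, the induced boundary map constructed above preserves $\chainlinked$; it therefore sends the class of each domain factor $i$ into the class of a single codomain factor, which I define to be $\indexfn(i)$, giving a map $\indexfn\colon\{1,\dots,m\}\to\{1,\dots,n\}$.

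It remains to promote this matching of directions to the global splitting. Every point of $D$ lies on a maximal polydisc through the base point, and on each such polydisc the previous step shows that $\phi$ is ``block diagonal'': the disc-coordinates belonging to the domain factors in $\indexfn^{-1}(k)$ are the only ones whose movement is felt by the $k$-th codomain coordinate. Patching these block descriptions over the family of maximal polydiscs covering $D$, I would conclude that $\phi(x)_k$ depends only on $x_{\indexfn^{-1}(k)}$; setting $\phi_k(x_{\indexfn^{-1}(k)}):=\phi(x)_k$ then yields the displayed formula, and each $\phi_k$ is distance-preserving because the max defining the distance on $D'$ reduces, on a one-block variation, to the $k$-th factor. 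Finally, $\indexfn$ is surjective: were some $D'_k$ not in its image, the splitting would make $\phi(\cdot)_k$ constant, exhibiting a distance-preserving map of $D$ into $\prod_{l\ne k}D'_l$, a domain of rank $r'-\rank D'_k<r=\rank D$, contradicting Theorem~\ref{thm:rank_inequality}.

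The hard part is to establish (i) and (ii) with sufficient precision, together with the promotion step. The more serious difficulty is (ii): I must rule out that a ``mixed'' horofunction, escaping simultaneously in several factors, has finite Gromov product with pure directions of different factors and so bridges them; controlling these mixed directions is exactly where the fine description of the horofunction boundary of a sup-product is indispensable, and where the irreducibility underlying (i) must be shown not to leak across factors. The second difficulty is the promotion step: concentration at the level of directions does not by itself force the global splitting---the map $z\mapsto(z,z/2)$ of the disc into the bidisc is distance-preserving and escapes into a single factor while genuinely moving the other---so here the equality of ranks, through the flats-onto-flats property established above, must be used essentially to exclude such shearing.
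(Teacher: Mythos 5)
Your overall architecture is the same as the paper's: flats go to flats, a chain relation on the singular boundary directions detected through the Gromov product picks out the irreducible factors, the isometry preserves that relation, and the splitting is then read off on flats. Your surjectivity argument via Theorem~\ref{thm:rank_inequality} is a legitimate alternative to the paper's (which instead observes that the image of a frame is a frame and hence meets every factor of $D'$). But the proposal leaves genuine gaps at exactly the two places you flag, and at one place you do not flag. First, the assertion in your opening paragraph that an isometric copy of $\Delta^r$ inside an equal-rank target ``is again a maximal polydisc'' is not a consequence of equal rank alone; it is one of the main technical results of the paper (Lemmas~\ref{lem:new_flats_mapped_to_flats} and~\ref{lem:flats_mapped_to_flats}), and it rests on a horofunction characterisation of flats (Propositions~\ref{prop:characterise_both_horofunctions_equal_zero} and~\ref{prop:characterise_sum_equal_zero}) together with the fact that singletons go to singletons and that oppositeness and orthogonality of minimal tripotents are detected by the Gromov product. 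Moreover, for your ``promotion step'' you need more than ``flats to flats'': you need the exact coefficient preservation $\phi(\sum_i\lambda_ie_i)=\sum_i\lambda_i\phi(e_i)$ of Lemma~\ref{lem:new_flats_mapped_to_flats}, which is what rules out the shearing you worry about and makes well-definedness of $\phi_k$ immediate by projecting onto $D'_k$. The ``essential uniqueness of the $\ell^\infty$-decomposition'' you invoke only pins the coordinates down up to permutation and coordinatewise isometry, which is not enough.

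Second, your fact (i) --- that the singular directions of an irreducible factor form a single chain class --- is the real content and you do not prove it. The paper avoids proving connectivity of a given irreducible factor directly: it defines the relation $\chainlinked$ on minimal tripotents by non-orthogonality of consecutive elements and proves (Lemma~\ref{lem:in_same_component_if_non_ortho_chain}, a purely algebraic Jordan-identity argument) that the spans of the classes are mutually orthogonal irreducible subtriples decomposing $V$, so the classes \emph{are} the factors. Note also that the correct Gromov-product translation of ``$u$ not orthogonal to $v$'' is that $\gromprod{\Xi_u}{\Xi_v}{0}$ and $\gromprod{\Xi_u}{\Xi_{-v}}{0}$ are not both infinite (Proposition~\ref{prop:characterise_orthogonality}), not finiteness of $\gromprod{\Xi_u}{\Xi_v}{0}$ alone; your relation happens to generate the same classes because $\gromprod{\Xi_v}{\Xi_{-v}}{0}=0$, but this needs saying. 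By contrast, your fact (ii) and the spectre of ``mixed'' horofunctions bridging factors is a non-issue in this framework: the relation is only ever evaluated on singleton Busemann points, which correspond to minimal tripotents and are preserved under the induced boundary map (Lemma~\ref{lem:singletons_to_singletons}, via the detour cost), and minimal tripotents lying in different factors are orthogonal, so both relevant Gromov products are infinite. So the proposal identifies the right skeleton but omits the proofs of the three load-bearing lemmas (flats with coefficients, the algebraic identification of chain classes with factors, and singleton preservation), without which the argument does not close.
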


Note that it follows that $D$ has at least as many irreducible
factors as $D'$.

\begin{theorem}
\label{thm:holomorphic_or_antiholomorphic}
Let $\phi\colon D \to D'$ be a Carath\'eodory distance-preserving map
between two finite-dimensional bounded symmetric domains of equal rank.
Assume that $D$ is irreducible. Then, $\phi$ is either holomorphic
or antiholomorphic.
\end{theorem}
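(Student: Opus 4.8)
The plan is to convert the metric data encoded in the horofunction boundary into the complex structure, after first reducing the co-domain to the irreducible case. Applying Theorem~\ref{thm:respects_product} with $D$ having a single irreducible factor (so $m = 1$), the surjection $\indexfn\colon\{1\}\to\{1,\dots,n\}$ forces $n=1$; hence $D'$ is irreducible and $\phi$ is a Carath\'eodory distance-preserving map between two irreducible bounded symmetric domains of the same rank $r$. Realising $D$ and $D'$ as the open unit balls of JB*-triples, I would post-compose $\phi$ with an automorphism of $D'$ to arrange $\phi(0)=0$; since automorphisms are holomorphic, this alters neither the hypothesis nor the desired conclusion.

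Next I would pass to the boundary. Because $\phi$ preserves distances, it preserves the Gromov product $\gromprod{x}{y}{o}$ for every basepoint $o$, as this quantity depends on distances alone, and it sends almost-geodesic rays to almost-geodesic rays with the same defect. Consequently $\phi$ carries Busemann points of $D$ to Busemann points of $D'$ and induces a map $\partial\phi$ that respects the partition of the horofunction boundary into parts and does not increase the detour metric on each part. Here I would invoke the explicit description of the Carath\'eodory horofunction boundary from~\cite{chu_cueto_avellaneda_lemmens}: the parts are indexed by the boundary components of the domain (equivalently, by the tripotents of the triple), and the detour metric on each part recovers, up to normalisation, the Carath\'eodory metric of the corresponding boundary component, which is itself a bounded symmetric domain of lower rank.

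I would then descend to rank one. Every point of $D$ lies on a complex geodesic through the origin, that is, a totally geodesic holomorphic disc; its two boundary endpoints are Shilov boundary points, which correspond to singleton parts coming from regular geodesics. Using $\partial\phi$ together with the preserved Gromov product, I would show that $\phi$ sends each such disc to a complex geodesic through the origin of $D'$: the boundary endpoints of a complex geodesic are characterised by an extremal value of the Gromov product, a condition $\phi$ preserves, and this pins the image down to a totally geodesic holomorphic or antiholomorphic disc. On each such disc $\phi$ is a Carath\'eodory isometry between rank-one domains, so Antonakoudis' theorem~\cite{An} applies and shows that $\phi$ is holomorphic or antiholomorphic there.

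Finally I would globalise. The induced boundary map varies continuously with the disc, so the holomorphic/antiholomorphic alternative cannot switch between nearby directions without contradicting the preserved Gromov products of their shared boundary data; hence one type holds on all complex geodesics through the origin simultaneously. Since these discs cover $D$, a uniqueness argument then assembles the disc-wise restrictions into a single holomorphic (resp.\ antiholomorphic) map equal to $\phi$. I expect the main obstacle to lie precisely in this passage from metric to analytic data: upgrading the one-sided detour inequality to genuine rigidity on the flats that $\phi$ preserves, proving that the image of a complex geodesic is complex-analytic rather than merely a metric geodesic, and controlling the possible non-surjectivity of $\phi$ together with the non-Riemannian, polyhedral nature of the Carath\'eodory metric on maximal flats---features that obstruct any direct appeal to classical Riemannian symmetric-space rigidity and make the horofunction analysis indispensable.
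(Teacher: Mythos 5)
Your reduction to $\phi(0)=0$ and your use of the boundary machinery (Busemann points, parts, preserved Gromov product) match the paper's set-up, but the core of your argument has a genuine gap. The decisive step in your plan is the claim that $\phi$ sends each complex geodesic disc through the origin to a \emph{holomorphic or antiholomorphic} disc, justified by saying that ``the boundary endpoints of a complex geodesic are characterised by an extremal value of the Gromov product.'' No such characterisation is established, and it is exactly the metric-to-analytic rigidity that constitutes the theorem; you acknowledge this yourself at the end, which means the proof is deferred rather than given. Two further problems compound this. First, even granting that a disc $\mathbb{D}u$ maps to some totally geodesic disc, its restriction is a Carath\'eodory isometry from $\mathbb{D}$ \emph{into the higher-rank domain} $D'$, not a map between rank-one domains, so the cited rank-one result of Antonakoudis does not apply as stated. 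Second, you conflate singleton parts with Shilov boundary points: the singletons correspond to \emph{minimal} tripotents (Proposition~\ref{prop:singletons_of_bsd}), whereas Shilov boundary points are maximal tripotents, and a generic complex geodesic through $0$ has endpoints that are not tripotents at all; likewise the detour metric on a part is the Hilbert metric on a cross-section of the symmetric cone in $V_2(e)$ (Theorem~\ref{Hilbert}), not the Carath\'eodory metric of the boundary component.

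For comparison, the paper avoids discs entirely. It first shows that flats map to flats with the same frame coordinates (Lemma~\ref{lem:new_flats_mapped_to_flats}), deduces that $\phi$ preserves the Bergman distance and is therefore $C^\infty$ and in fact the restriction of a \emph{real} linear map (Lemma~\ref{lem:real_linear}). The complex structure is then recovered algebraically: the Gromov product characterisations of $v=-u$, of orthogonality, and of $v=\pm\mathrm{i}u$ (Propositions~\ref{prop:characterise_opposite}, \ref{prop:characterise_orthogonality} and Lemma~\ref{lem:characterise_multiplication_by_i}) force $\phi(\mathrm{i}e)=\pm\mathrm{i}\phi(e)$ for every minimal tripotent $e$, with a sign that is a continuous function on the connected set of minimal tripotents (here irreducibility of $D$ enters), hence constant; combined with real linearity this yields complex linearity or conjugate linearity. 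If you want to salvage your approach, you would need to supply the missing rigidity statement for discs, but the paper's route through real linearity plus the $\pm\mathrm{i}$ dichotomy on minimal tripotents is the mechanism that actually closes the gap.
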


In the case where $D$ and $D'$ have the same number of irreducible
factors, the component maps $\phi_k$ of Theorem~\ref{thm:respects_product}
map irreducible factors to irreducible factors,
and so Theorem~\ref{thm:holomorphic_or_antiholomorphic} can be applied
to obtain that each of them is either holomorphic or antiholomorphic.
In particular, this is true when $D'$ is identical to $D$.
In this case we get the following immediate corollary.

\begin{corollary}
If $D$ is a finite-dimensional irreducible bounded symmetric domain,
then $\mathrm{Aut}(D):=\{\phi\colon D\to D \mid \mbox{$\phi$ biholomorphic}\}$
is an index two subgroup of
$\mathrm{Isom}(D) :=
    \{\phi\colon D\to D
 \mid \mbox{$\phi$ is a bijective  Carath\'eodory distance-preserving map}\}$.  
\end{corollary}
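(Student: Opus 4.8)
The plan is to deduce the corollary from Theorem~\ref{thm:holomorphic_or_antiholomorphic} together with elementary coset bookkeeping and the existence of a single antiholomorphic symmetry of $D$. First I would record that both sets are groups under composition: $\mathrm{Isom}(D)$ is closed under inverses since, for a bijective isometry $\phi$, substituting $a=\phi(p)$ and $b=\phi(q)$ into $c_D(\phi(p),\phi(q))=c_D(p,q)$ gives $c_D(\phi^{-1}(a),\phi^{-1}(b))=c_D(a,b)$; and $\mathrm{Aut}(D)\subseteq\mathrm{Isom}(D)$ because every biholomorphism is a bijection preserving the Carath\'eodory distance. Hence $\mathrm{Aut}(D)$ is a subgroup of $\mathrm{Isom}(D)$.

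Next I would pin down which isometries are biholomorphic. Applying Theorem~\ref{thm:holomorphic_or_antiholomorphic} with $D'=D$ (equal rank, and $D$ irreducible), every $\phi\in\mathrm{Isom}(D)$ is holomorphic or antiholomorphic. A bijective holomorphic self-map of a domain in $\C^n$ is automatically biholomorphic, by the standard fact that injective holomorphic maps between equidimensional domains have nowhere-vanishing Jacobian; thus the holomorphic members of $\mathrm{Isom}(D)$ are exactly $\mathrm{Aut}(D)$. Since $\dim D\ge 1$, no bijection can be simultaneously holomorphic and antiholomorphic, for its differential would then be both $\C$-linear and conjugate-linear, hence zero. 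So $\mathrm{Isom}(D)$ is the disjoint union of $\mathrm{Aut}(D)$ and the set $A$ of antiholomorphic isometries.

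To conclude, I would show that $A$ is a single coset. Fix any $\tau\in A$; then $\phi\mapsto\tau\after\phi$ interchanges the two classes, because a holomorphic map composed with an antiholomorphic one is antiholomorphic, two antiholomorphic maps compose to a holomorphic one, and the inverse of an antiholomorphic bijection is again antiholomorphic (its differential is the inverse of a conjugate-linear isomorphism). Hence $A=\tau\after\mathrm{Aut}(D)$, so $\mathrm{Isom}(D)=\mathrm{Aut}(D)\sqcup\tau\after\mathrm{Aut}(D)$ and the index is two, provided $A\neq\emptyset$.

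The one genuinely geometric point, and the step I expect to be the main obstacle, is establishing $A\neq\emptyset$, i.e.\ that $D$ admits an antiholomorphic isometry. This is classical: every bounded symmetric domain carries a conjugation, a conjugate-linear triple automorphism of the associated JB*-triple, which restricts to an antiholomorphic self-map of $D$ and preserves the Carath\'eodory distance because it is a triple automorphism. The cleanest way to exhibit it without invoking the full classification is to observe that entrywise complex conjugation is an antiholomorphic isometric involution of each Cartan factor in its standard bounded realization, and that such involutions pass to products, so $A$ is nonempty and the index equals exactly two.
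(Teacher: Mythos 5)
Your proof is correct and is essentially the argument the paper intends: the corollary is stated there as an immediate consequence of Theorem~\ref{thm:holomorphic_or_antiholomorphic} applied with $D'=D$, and your coset bookkeeping (holomorphic bijections are biholomorphic, the two classes are disjoint, and composition with a fixed antiholomorphic isometry swaps them) is exactly the routine part the authors leave to the reader. The one substantive point you rightly isolate is the nonemptiness of the antiholomorphic class --- without which the index would be one rather than two --- and your appeal to the standard conjugation of the Cartan factors in their bounded realizations settles it; the paper does not comment on this step at all.
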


Recalling that every bounded symmetric domain can be realised as the unit ball
of a JB*-triple, we can strengthen the conclusion of
Theorem~\ref{thm:holomorphic_or_antiholomorphic}
to get the following rigidity result.
A \emph{triple homomorphism} between JB*-triples is a complex-linear map
that preserves the triple product.

\begin{theorem}
\label{thm:triple_homomorphism}
Let $\phi\colon D \to D'$ be a Carath\'eodory distance-preserving map
between two finite-dimensional bounded symmetric domains of equal rank.
If $\phi$ is holomorphic and $\phi(0) = 0$,
then $\phi$  is the restriction to the unit ball of a triple homomorphism.
\end{theorem}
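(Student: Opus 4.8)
The plan is to realise $D$ and $D'$ as the open unit balls $B_V$ and $B_W$ of JB*-triples $V$ and $W$, and to prove that $\phi$ coincides on $D$ with the complex-linear map $T := d\phi_0\colon V\to W$, which will turn out to be a triple homomorphism. The first and easiest step is to record that $\phi$ preserves the JB*-norm. On the unit ball of a JB*-triple the Carath\'eodory distance from the centre is $c_D(0,x)=\tanh^{-1}\|x\|$, and similarly for $D'$; since $\phi$ is distance-preserving with $\phi(0)=0$, the monotonicity of $\tanh^{-1}$ gives $\|\phi(x)\|=\|x\|$ for every $x\in D$. By homogeneity this already shows that $T$ is a complex-linear isometry of $V$ into $W$.

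The heart of the matter, and the step I expect to be the main obstacle, is to upgrade this to $\phi=T$ on all of $D$. This is exactly where equality of ranks must enter: without it there are non-linear distance-preserving holomorphic maps fixing the origin, for instance $\lambda\mapsto(\lambda,\lambda^2)$ from the disc into the bidisc, so norm preservation alone is not enough. I would argue flat by flat. Every $x\in D$ lies in some maximal polydisc $P\cong\Delta^{r}$ through the origin, obtained by extending to a full frame of $r$ orthogonal minimal tripotents a frame adapted to the spectral decomposition of $x$. Since maximal polydiscs are totally geodesic, $\phi|_P$ is a holomorphic isometric embedding of $\Delta^{r}$ into $D'$, and its image is a totally geodesic flat of rank $r=r'$, hence a maximal flat, which therefore fills a maximal polydisc $P'\cong\Delta^{r}$ through $0$. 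Thus $\phi|_P\colon P\to P'$ is a bijective holomorphic isometry fixing the origin. Applying Theorem~\ref{thm:respects_product} to $\phi|_P$ forces the index map to be a bijection and expresses $\phi|_P$ through $r$ distance-preserving holomorphic self-maps of the disc fixing $0$; by the Schwarz lemma each of these is a rotation, so $\phi|_P$ is complex-linear. A complex-linear map equals its own derivative at $0$, whence $\phi|_P=d\phi_0|_P=T|_P$. As the maximal polydiscs through $0$ cover $D$, this yields $\phi=T$ on $D$.

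It then remains to show that the linear isometry $T$ is a triple homomorphism, and here I would invoke the rigidity of JB*-triples. Let $M:=\phi(D)=T(B_V)$, the open unit ball of the closed complex subspace $T(V)\subseteq W$. Being linear, $T$ makes $M$ invariant under the central symmetry $z\mapsto -z$, and as the image of the holomorphic isometric embedding $\phi$ it is a totally geodesic complex submanifold through $0$; hence $M$ is a bounded symmetric subdomain of $D'$, i.e. the open unit ball of a JB*-subtriple $W'':=T(V)$ of $W$. Now $T\colon V\to W''$ is a \emph{surjective} linear isometry between JB*-triples, so by Kaup's theorem it is a triple isomorphism. Composing with the inclusion $W''\hookrightarrow W$, which is itself a triple homomorphism, shows that $T\colon V\to W$ is a triple homomorphism, and $\phi$ is its restriction to $B_V$.

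The two delicate points I would need to be most careful about are, first, the flat-to-flat reduction in the second paragraph, namely that equality of ranks together with total geodesy of maximal polydiscs forces $\phi$ to carry a maximal polydisc \emph{onto} another maximal polydisc; and second, the identification of the image $M$ as the unit ball of a genuine JB*-subtriple, so that the surjective form of Kaup's isometry theorem can legitimately be applied. Everything else reduces to one-variable facts (the Schwarz lemma) and the structural results already available in Theorems~\ref{thm:rank_inequality} and~\ref{thm:respects_product}.
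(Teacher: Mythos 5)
Your overall architecture --- show $\phi$ is a complex-linear norm-isometry, then upgrade to a triple homomorphism using the symmetry of the image --- matches the paper's, and your first step (norm preservation from $d(0,x)=\tanh^{-1}\|x\|$ and homogeneity) is correct. However, the two points you flag as delicate are genuine gaps, not details. First, the polydisc step: your justification that ``its image is a totally geodesic flat of rank $r=r'$, hence a maximal flat, which therefore fills a maximal polydisc'' conflates the complex $r$-dimensional polydisc with the real $r$-dimensional flat. The image of $\Delta^r$ under $\phi$ is not a flat, and the assertion that it is a maximal polydisc of $D'$ is essentially equivalent to the complex linearity you are trying to extract from it; nothing in the proposal establishes it. The paper avoids polydiscs entirely: Lemma~\ref{lem:new_flats_mapped_to_flats} (proved via the Gromov product and Proposition~\ref{prop:characterise_sum_equal_zero}) shows that on each \emph{real} flat one has $\phi(\sum_i\lambda_i e_i)=\sum_i\lambda_i\phi(e_i)$, whence $\phi$ agrees on $D$ with its derivative at $0$ (Lemma~\ref{lem:real_linear}); complex linearity is then immediate from holomorphy, with no need for the Schwarz lemma or for identifying the image of a polydisc.

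Second, the identification of $T(V)$ as a JB*-subtriple. You need (a) that $M=T(B_V)$ is totally geodesic, which requires uniqueness of geodesics and hence the \emph{Bergman} metric --- but the fact that $\phi$ preserves the Bergman distance is itself a nontrivial consequence of the flats machinery (Lemma~\ref{lem:distance_preserving_for_bergman_metric}), not of Carath\'eodory-isometry alone; and (b) that a linearly embedded, symmetric, totally geodesic complex submanifold through $0$ is the unit ball of a subtriple. Point (b) is essentially the content of the Chu--Mackey criterion (Theorem~\ref{thm:chu_mackey}) that the paper relies on, so asserting it as a known ``hence'' and then applying Kaup's surjective-isometry theorem begs the question. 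The paper's route is to verify the Chu--Mackey condition directly: it writes $g_{\phi(x)}=S_{\phi(y)}\circ S_0$ as a composition of geodesic symmetries (Lemma~\ref{lem:relation_mobius_flip}) and uses that $\phi$, being Bergman-distance-preserving, intertwines geodesic symmetries (Lemma~\ref{lem:preserves_reflection}), so that $\phi(D)$ is invariant under every $g_{\phi(x)}$. To repair your proof you would need to supply precisely these ingredients.
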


As mentioned earlier, we use ideas from metric geometry to prove the results.
In particular, we will we analyse the extension of the distance-preserving map
to the horofunction boundary and the way it behaves on the parts of this
boundary.
We will show, for any bounded symmetric domain with its Carath\'eodory
distance, that each part of the horofunction boundary with the detour metric
is isometric to a Hilbert metric space on a symmetric cone;
see Theorem~\ref{Hilbert}. This result, which
extends~\cite[Propositions 8.4 and 8.5]{chu_cueto_avellaneda_lemmens},
will be used to prove Theorem~\ref{thm:rank_inequality}.

\section{Bounded symmetric domains and JB*-triples}
We will recall some results for JB*-triples that are needed in this paper.
Most of them can be found
in~\cite{chu_Jordan_structures_in_geometry_and_analysis},
\cite{chu_bounded_symmetric_domains_in_banach_spaces},
and~\cite{loos_bounded_symmetric_domains_and_jordan_pairs}. Throughout the paper the bounded symmetric domains and the JB*-triples will be finite dimensional.

A JB*-triple is a complex Banach space $V$ endowed with a triple product
\[
\{\cdot,\cdot,\cdot\} \colon V\times V \times V \to V,
\]
satisfying the following axioms, for $a,b,x,y,z\in V$:
\begin{enumerate}
\item[(i)]
$\{\cdot,\cdot,\cdot\}$ is linear and symmetric in the outer variables,
and conjugate linear in the middle variable;
\item[(ii)]
$\bigl\{a,b,\{x,y,z\}\bigr\}
    = \bigl\{\{a,b,x\},y,z\bigr\}
     -\bigl\{x, \{b,a,y\},z\bigr\}
    + \bigl\{x,y,\{a,b,z\}\bigr\}$;
\item[(iii)]
The operator $a\smallsquare a := \{a, a, \cdot\,\}$ from $V$ to $V$
is Hermitian, and has non-negative spectrum;
\item[(iv)]
$\|a \smallsquare a\| = \| a \|^2$.
\end{enumerate}

The \emph{box operator} $a\smallsquare b \colon V\to V$ is defined by
\begin{equation*}
a \smallsquare b(x) := \{a,b,x\},
\qquad\text{for all $x\in V$}.
\end{equation*}
A \emph{tripotent} of a JB*-triple $V$ is an element $e$ such that
$\{e,e,e\} = e$.
Each tripotent $e$ induces a decomposition of $V$ into eigenspaces of the
box operator $e \smallsquare e$.
The eigenvalues of this operator lie in the set $\{0, 1/2, 1\}$. Let
\begin{equation*}
V_k(e)
  := \Big\{ x \in V \Big\arrowvert e\smallsquare e(x) = \frac {k} {2} x \Big\},
\qquad\text{for $k \in \{0, 1, 2\}$},
\end{equation*}
be the corresponding eigenspaces,
which are known as the \emph{Peirce $k$-spaces} of $e$.
We have the algebraic direct sum
\begin{equation*}
V = V_0(e) \oplus V_1(e) \oplus V_2(e).
\end{equation*}
This is the \emph{Peirce decomposition} associated to $e$.
We have the following \emph{Peirce calculus}:
\begin{equation*}
\big\{ V_{i}(e), V_{j}(e), V_{k}(e) \big\} \subseteq V_{i-j+k} (e),
\qquad\text{if $i-j+k \in \{ 0,1,2\}$},
\end{equation*}
and
\begin{equation*}
\big\{ V_{i}(e), V_{j}(e), V_{k}(e) \big\} = \{0\},
\qquad\text{otherwise}.
\end{equation*}
Moreover,
\begin{equation*}
\big\{ V_{2}(e), V_{0}(e), V \big\}
    = \big\{ V_{0}(e), V_{2}(e), V \big\}
    = \{0\}.
\end{equation*}

Each Peirce $k$-space $V_k(e)$ is the range of the \emph{Peirce $k$-projection}
$P_k(e) \colon V \to V$, defined by
\begin{equation*}
P_2(e) := Q_e^2,
\qquad
P_1(e) := 2(e \smallsquare e - Q_e^2),
\qquad
P_0(e) := B(e,e).
\end{equation*}
Here, $Q_a \colon V \to V$ is, for $a\in V$, the \emph{quadratic operator}
\begin{equation*}
Q_a(x) = \{a,x,a\},
\qquad\text{for all $x\in V$},
\end{equation*}
and $B(a,b)\colon V\to V$, with $a, b\in V$, is the \emph{Bergman operator}
\begin{equation}
\label{eqn:bergman_definition}
B(a,b)(x) := x - 2(a\smallsquare b)(x) + \big\{ a,\{b,x,b\},a \big\},
\qquad\text{for all $x\in V$}.
\end{equation}

For each $a\in D$, the \emph{M\"obius transformation} $g_a \colon D\to D$
is defined to be
\begin{equation*}
g_a(x) := a + B(a,a)^{1/2} (\id + x \smallsquare a)^{-1}(x),
\qquad\text{for all $x\in D$}.
\end{equation*}
Here $\id$ denotes the identity operator on $V$.
The inverse operator in this definition exists because
$\|x\smallsquare a\| \le \|x\| \|a\| < 1$.
Observe that, for each $a\in D$, the M\"obius transformation $g_a$
maps $0$ to $a$. Moreover, $g_a$ is a bijection from $D$ to itself, and its inverse is $g_{-a}$.

It can be shown that, for two elements $a$ and $b$ of $V$,
we have $a \smallsquare b =0$ if and only if $b \smallsquare a =0$.
In this case the two elements are said to be \emph{orthogonal}.
Another equivalent condition is that $\{a,a,b\} =0$.
For orthogonal elements $a$ and $b$,
\begin{equation*}
\|a+ b\|= \max \big\{ \|a\|, \|b\| \big\};
\end{equation*}
see~\cite[Corollary 3.1.21]{chu_Jordan_structures_in_geometry_and_analysis}.

One can define an ordering on the set of tripotents by writing
$c \le e$ if
$e = c + c'$, where $c'$ is a tripotent
orthogonal to $c$.
A tripotent is \emph{minimal} if it is non-zero and minimal with respect
to this ordering.
This is the case precisely when the tripotent $e$ satisfies
$V_2(e)=\mathbb{C}e$.

An orthogonal set of non-zero tripotents
is linearly independent, and every tripotent can be written as a sum of
orthogonal minimal tripotents. The maximum number of mutually orthogonal tripotents in $V$ is called the \emph{rank} of $V$.  
A \emph{frame} is a maximal orthogonal system of minimal tripotents.
The \emph{rank} of a tripotent $e$ is the rank of the sub-triple $V_2(e)$, and $e$ is said to be a \emph{maximal} tripotent if the rank of $V_2(e)$ is equal to the rank of $V$.
If $e = e_1 + \dots + e_s$ is a decomposition of $e$ into a sum of orthogonal minimal tripotents, then the rank of $e$ is $s$.

The {\em genus} $p$ of a JB*-triple $V$ with rank $r$ is defined as
\[
p:= \frac{2}{r}\mathrm{Trace} ( e\smallsquare e),
\]
where $e$ is a maximal tripotent.  It is known, see \cite[p.194]{chu_bounded_symmetric_domains_in_banach_spaces}, that 
\[
rp = \dim V_2(e) + \dim V,
\]
for any maximal tripotent $e\in V$.

Each family $e_{1}, \dots, e_{n}$ of orthogonal tripotents defines a
decomposition of $V$ as follows.
For $i,j \in \{0,1, \ldots, n\}$, define the \emph{joint Peirce space}
\begin{equation*}
%\label{ij}
V_{ij}(e_{1},\dots,e_{n})
    := \Big\{z \in V \mid
          \{e_{k}, e_{k}, z\} = \frac {1} {2} (\delta_{ik} + \delta_{jk}) z,
               \,\text{for all $k=1,\ldots,n$} \Big\}.
\end{equation*}
Here $\delta_{ij}$ is the Kronecker delta,
which equals $1$ if $i = j$ and is zero otherwise.
The \emph{joint Peirce decomposition} of $V$ is
\begin{equation*}
V = \bigoplus_{0 \le i \le j \leq n} V_{ij}.
\end{equation*}
The joint Peirce spaces satisfy the multiplication rules
\begin{align*}
\{ V_{ij}, V_{jk}, V_{kl}\} &\subseteq V_{il},
    \qquad\,\, \text{for all $i, j, k, l$},
    \quad \text{and} \\
    V_{ij} \smallsquare V_{kl} &=\{0\},
    \qquad \text{for $i,j \notin \{k, l\}$}.
\end{align*}
For each $i$ and $j$, there is a contractive projection
$P_{ij} (e_{1}, \dots, e_{n})$ from $V$ to $V_{ij} (e_{1}, \ldots, e_{n})$
called the \emph{joint Peirce projection}.
We will occasionally denote these simply by $P_{ij}$
if it is clear which tripotents $e_1, \ldots, e_n$ are involved.

\newcommand\iso{f}
\newcommand\mapclass{h}
\newcommand\distfn{\psi}
\newcommand\dist{d}
\newcommand\closure{\operatorname{cl}}
\newcommand{\Rplus}{\mathbb{R}_+}

\section{The horofunction boundary}

In the section we discuss the relevant metric geometry concepts. We will work in metric spaces $(X,\dist)$ that are proper, i.e., all closed balls are compact, and $(X,\dist)$ is geodesic, meaning that every pair of points can be connected
by a geodesic arc. 
Associate to each point $z\in X$ the function $\distfn_z\colon X\to \R$,
\begin{equation*}
\distfn_z(x) := \dist(x,z)-\dist(b,z),
\end{equation*}
where $b\in X$ is some base-point.
The map $\distfn\colon X\to \mathcal{C}(X),\, z\mapsto \distfn_z$
is injective and continuous.
Here, $\mathcal{C}(X)$ denotes the space of continuous
real-valued functions on $X$ with the topology of pointwise convergence.
The closure $\closure\distfn(X)$ is compact.
As $(X,\dist)$ is proper and geodesic, $\psi$ is a homeomorphism between $X$ and $\psi(X)$,
and hence $\closure\distfn(X)$ is a compactification of $X$.
We call it the \emph{horofunction compactification}.
As $(X,\dist)$ is proper, it is also separable, hence the topology of pointwise convergence on $\closure\distfn(X)$ is metrisable. This implies that each horofunction is the limit of a sequence of points in $X$.   

We define the \emph{horofunction boundary} of $(X,\dist)$ to be
\begin{equation*}
X(\infty) :=\big(\closure\distfn(X)\big)\backslash\distfn(X).
\end{equation*}
The elements of this set are the \emph{horofunctions} of $(X,\dist)$.
They may be thought of as ``points at infinity'' of the metric space.
The definition of the horofunction boundary is essentially 
due to Gromov~\cite{gromov:hyperbolicmanifolds},
although he used a different topology.

Although the definition appears to depend on the choice of base-point,
one can check that horofunction boundaries coming from different base-points
are homeomorphic, and that corresponding horofunctions differ only by an
additive constant.

\subsection{Busemann points and the detour cost}

\label{def:almost_almost_geodesic} A sequence $(z_n)$ in a metric space is called an~\emph{almost geodesic} if,
for all $\epsilon>0$,
\begin{equation*}
d(z_0,z_j) \ge d(z_0,z_i) + d(z_i,z_j) - \epsilon,
\end{equation*}
for $i$ and $j$ large enough, with $i\le j$.
This definition is similar to Rieffel's~\cite{rieffel_group},
except that here the almost geodesics are unparameterised.
Note that any subsequence of an almost geodesic is also an almost geodesic.

Rieffel~\cite{rieffel_group} showed that
every almost-geodesic converges to a limit in the horofunction boundary.
A horofunction is said to be a \emph{Busemann point}
if there is an almost-geodesic converging to it.

We define the \emph{detour cost}
for any two horofunctions $\xi$ and $\eta$ in $X(\infty)$ to be
\begin{equation*}
H(\xi,\eta)
    := \liminf_{x\to\xi} \big( \dist(b,x) + \eta(x) \big)
     = \sup_{V\in \mathcal{U}_\xi}
           \Big( \inf_{x\in V\cap X}  \dist(b,x) + \eta(x) \Big),
\end{equation*}
where $\mathcal{U}_\xi$ is the collection of neighbourhoods of $\xi$.
This concept appeared first in~\cite{AGW-m}.
Intuitively, it is an extension to the boundary of the excess of the triangle
inequality $\dist(b, x) + \dist(x, y) - \dist(b, y)$, where $y$ tends to $\eta$,
and $x$ tends to $\xi$. Thus, it measures the cost of taking a detour
close to $\xi$ on the way from $b$ to $\eta$.

In the case where $\xi$ is a Busemann point, it suffices to calculate
the limit along any almost-geodesic $(z_n)$ converging to it, that is,

\begin{equation*}
 H(\xi,\eta) = \lim_{n\to\infty} \big( \dist(b,z_n) + \eta(z_n) \big),
\end{equation*}
for any horofunction $\eta$;
see \cite[Lemma 2.6]{walsh_hilbert_and_thompson_metrics_isometric_to_infinite_dimensional_banach_spaces}.

The Busemann points can be characterised as follows:
a horofunction $\xi$ is Busemann if and only if $H(\xi,\xi)=0$.

The detour cost is non-negative and satisfies the triangle inequality,
but it is not necessarily symmetric and may take the value zero between
two distinct points.
We obtain better properties, however, when we symmetrise.
.
For Busemann points $\xi$ and $\eta$, define the \emph{detour metric}
\begin{equation*}
\delta(\xi,\eta) := H(\xi,\eta) +H (\eta,\xi).
\end{equation*}

This function is a (possibly $\infty$-valued) metric
on the set of Busemann points.
It is independent of the choice of basepoint.

We may consider a pair of Busemann points to be related if the distance between
them in the detour metric is finite. This is an equivalence relation,
and so partitions the set of Busemann points into what we call
\emph{parts}; these are the maximal subsets on which the detour metric
is a genuine metric.
When a part consists of a single Busemann point, we call that point a
\emph{singleton}. These are of particular interest because they tend to
be the simplest and most tractable horofunctions.

\section{The Gromov product on the horofunction boundary}

We define the Gromov product of a pair of points $x$ and $y$
with respect to a basepoint $b$ as follows:
\begin{equation*}
\gromprod {x} {y} {b} := d(x, b) + d(b, y) - d(x, y).
\end{equation*}
This product may be extended to the horofunction boundary:
\begin{equation*}
\gromprod {\xi} {\eta} {b}
    := \liminf_{x\to\xi,\, y\to \eta} \gromprod {x} {y} {b}
     = \sup_{V\in \mathcal{U}_\xi,W\in \mathcal{U}_\eta}
         \Big(\inf_{x\in V\cap X,\, y\in W\cap X} \gromprod {x} {y} {b} \Big),
\end{equation*}
for all $\xi$ and $\eta$ in $X(\infty)$,
where $\mathcal{U}_\xi$ and $\mathcal{U}_\eta$ are the collections
of neighbourhoods of $\xi$ and $\eta$, respectively. 

It is similar to the detour cost in that it is an extension to the boundary
of the excess of the triangle inequality;
however, the choice of the two points that go to infinity is different.
Like the detour cost, it is invariant under distance-preserving maps,
provided the basepoint is mapped to the basepoint.
The two quantities provide complementary information.
The Gromov product can distinguish for example between hyperbolic space
and Euclidean space:
in the former, it is finite for all pairs of distinct points,
whereas in the latter it is infinite for all pairs that are not opposite
one another.
In contrast, in both spaces, the detour cost is always infinite
for pairs of distinct points.

When the horofunctions are Busemann points,
we have the following alternative expressions for the Gromov product.
Recall that a sequence of real-valued functions $f_n\colon X\to\mathbb{R}$
is \emph{almost non-increasing} if,
for every $\epsilon>0$, there exists $N$ such that
$f_j(x) \le f_i(x) + \epsilon$, for all $N\le i\le j$ and all $x\in X$.
Such sequences are closely related to almost geodesics,
as the following proposition shows.

\begin{proposition}
[\cite{walsh_hilbert_and_thompson_metrics_isometric_to_infinite_dimensional_banach_spaces}]
\label{prop:almost_non_inc_busemann}
A sequence $(z_n)$ in a metric space $(X,d)$ is an almost geodesic
if and only if $\psi_{z_n}(\cdot) := d(\cdot,z_n)-d(b,z_n)$ is an
almost non-increasing sequence.
\end{proposition}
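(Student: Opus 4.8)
The plan is to prove both implications by directly manipulating the two defining inequalities, using only the triangle inequality. The guiding observation is that the almost-non-increasing inequality $\psi_{z_j}(x)\le\psi_{z_i}(x)+\epsilon$, once specialised to the single test point $x=z_i$, collapses exactly onto the almost-geodesic inequality, whereas for a general $x$ it is simply forced by the triangle inequality. I would fix the basepoint $b$ throughout and first establish the equivalence in the form ``$(z_n)$ is almost geodesic with reference point $b$ $\iff$ $(\psi_{z_n})$ is almost non-increasing'', returning to the reconciliation with $z_0$ at the end.

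For the implication from almost non-increasing to almost geodesic, suppose that for a given $\epsilon>0$ there is an $N$ with $\psi_{z_j}(x)\le\psi_{z_i}(x)+\epsilon$ for all $N\le i\le j$ and all $x\in X$. Evaluating at $x=z_i$, and using $\psi_{z_i}(z_i)=-d(b,z_i)$ together with $\psi_{z_j}(z_i)=d(z_i,z_j)-d(b,z_j)$, I obtain
\[
d(z_i,z_j)-d(b,z_j)\le -d(b,z_i)+\epsilon,
\]
that is, $d(b,z_j)\ge d(b,z_i)+d(z_i,z_j)-\epsilon$, which is precisely the almost-geodesic inequality anchored at $b$.

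For the converse, given $\epsilon>0$ I would take the $N$ supplied by the almost-geodesic condition and, for $N\le i\le j$ and an arbitrary $x\in X$, write
\[
\psi_{z_j}(x)-\psi_{z_i}(x)=\big(d(x,z_j)-d(x,z_i)\big)-\big(d(b,z_j)-d(b,z_i)\big).
\]
The first bracket is at most $d(z_i,z_j)$ by the triangle inequality, while the almost-geodesic inequality bounds the second bracket below by $d(z_i,z_j)-\epsilon$; subtracting yields $\psi_{z_j}(x)-\psi_{z_i}(x)\le\epsilon$. The essential point is that this estimate is uniform in $x$, since $x$ enters only through $d(x,z_j)-d(x,z_i)\le d(z_i,z_j)$, which holds for every $x$; hence $(\psi_{z_n})$ is almost non-increasing.

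The step that needs the most care, and the one I regard as the main obstacle, is the bookkeeping of basepoints: the almost-geodesic condition is stated relative to the reference point $z_0$, whereas $\psi_{z_n}$ is built from the basepoint $b$, and the two computations above most naturally yield the almost-geodesic inequality anchored at $b$. I would close this gap by showing that neither property depends on the choice of reference point. Each condition forces, for every fixed $x$, the real sequence $\psi_{z_n}(x)$ to converge, since an almost-non-increasing sequence of reals that is bounded below (here by $-d(b,x)$) has $\limsup$ equal to $\liminf$. Consequently the shift $c_n:=d(b,z_n)-d(z_0,z_n)=-\psi_{z_n}(z_0)$ converges and is therefore almost constant; adding such a sequence to $(\psi_{z_n})$ preserves the almost-non-increasing property, so the characterisation is insensitive to swapping $b$ for $z_0$. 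This reconciles the two reference points and completes the equivalence.
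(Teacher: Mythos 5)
Your proof is correct. The paper itself gives no proof of this proposition, citing it from the reference instead; your argument (evaluating the almost-non-increasing inequality at $x=z_i$ for one direction, and combining the triangle inequality with the almost-geodesic inequality for the other) is the standard one and matches what one finds in the cited source. The one genuine subtlety --- that the paper's definition of almost geodesic is anchored at $z_0$ while $\psi_{z_n}$ is built from the basepoint $b$ --- you identify and resolve correctly, since under either hypothesis the bounded sequence $c_n = d(b,z_n)-d(z_0,z_n)$ is almost non-increasing (or the negative of one) and hence convergent, so the anchoring point can be changed at the cost of an extra $\epsilon$.
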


\begin{proposition}
\label{prop:alt_formula_for_gromov_prod}
Let $(x_i)$ and $(y_j)$ be two almost-geodesic sequences
in a metric space $(X, d)$, converging respectively to Busemann points
$\xi$ and $\eta$. Then,
\begin{equation*}
\gromprod {\xi} {\eta} {b}
    = -\inf_{z\in X} \big[\xi(z) + \eta(z)\big]
    = \lim_{i,j\to\infty}\,
          \gromprod {x_i} {y_j} {b}.
\end{equation*}
\end{proposition}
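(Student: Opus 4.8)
The plan is to establish both equalities at once by sandwiching all three quantities between a common upper and lower bound. Write $m := -\inf_{z\in X}\big[\xi(z)+\eta(z)\big]$. I will prove the chain
\[
m \;\le\; \gromprod{\xi}{\eta}{b} \;\le\; \liminf_{i,j\to\infty}\gromprod{x_i}{y_j}{b} \;\le\; \limsup_{i,j\to\infty}\gromprod{x_i}{y_j}{b} \;\le\; m,
\]
which forces every inequality to be an equality and shows that the double limit exists. The outer two bounds carry the content; the inner two are immediate.

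For the lower bounds, fix $z\in X$ and any $x,y\in X$. The triangle inequality $d(x,y)\le d(x,z)+d(z,y)$ gives
\[
\gromprod{x}{y}{b} \;\ge\; -\big[d(x,z)-d(b,x)\big]-\big[d(z,y)-d(b,y)\big] \;=\; -\distfn_x(z)-\distfn_y(z).
\]
Since the horofunction topology is that of pointwise convergence, $\distfn_x(z)\to\xi(z)$ as $x\to\xi$ and $\distfn_y(z)\to\eta(z)$ as $y\to\eta$; letting $x\to\xi$ and $y\to\eta$ and taking the supremum over $z$ yields $\gromprod{\xi}{\eta}{b}\ge m$. Moreover, because $(x_i)$ and $(y_j)$ eventually enter every neighbourhood of $\xi$ and of $\eta$, the $\sup$--$\inf$ form of the boundary Gromov product directly gives $\gromprod{\xi}{\eta}{b}\le\liminf_{i,j}\gromprod{x_i}{y_j}{b}$. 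I note that neither of these two steps uses the Busemann hypothesis.

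The upper bound on the $\limsup$ is where the almost-geodesic structure is essential, and it is the main obstacle. The starting point is the identity, valid for all $i,j$,
\[
\gromprod{x_i}{y_j}{b} \;=\; d(b,y_j)-\distfn_{x_i}(y_j),
\]
obtained by substituting $d(x_i,y_j)=\distfn_{x_i}(y_j)+d(b,x_i)$ into the definition of the Gromov product. The hypothesis enters through Proposition~\ref{prop:almost_non_inc_busemann}: the sequences $(\distfn_{x_i})$ and $(\distfn_{y_j})$ are almost non-increasing, so, given $\epsilon>0$, there are $M,N$ such that $\distfn_{x_i}(z)\ge\xi(z)-\epsilon$ for every $i\ge M$ \emph{uniformly in $z$}, and, evaluating the almost non-increasing property of $(\distfn_{y_j})$ at $z=y_j$ together with $\distfn_{y_j}(y_j)=-d(b,y_j)$, the bound $d(b,y_j)\le-\eta(y_j)+\epsilon$ for every $j\ge N$. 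Feeding these two estimates into the identity gives, for all $i\ge M$ and $j\ge N$,
\[
\gromprod{x_i}{y_j}{b}\;\le\;-\eta(y_j)+\epsilon-\big(\xi(y_j)-\epsilon\big)\;\le\; m+2\epsilon.
\]

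The decisive point, and the one requiring care, is that these estimates must hold uniformly in the pair $(i,j)$ so that the double $\limsup$ is controlled; this uniformity (in particular the uniformity in $z$ in the first estimate) is exactly what the almost non-increasing characterisation supplies, and is why the Busemann assumption cannot be dropped. Since $\epsilon>0$ is arbitrary, we obtain $\limsup_{i,j}\gromprod{x_i}{y_j}{b}\le m$, completing the chain and hence the proof.
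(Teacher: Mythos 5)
Your proof is correct, and while the overall sandwich
\[
-\inf_{z}\big[\xi(z)+\eta(z)\big]\le \gromprod{\xi}{\eta}{b}\le \liminf_{i,j}\gromprod{x_i}{y_j}{b}\le\limsup_{i,j}\gromprod{x_i}{y_j}{b}\le -\inf_{z}\big[\xi(z)+\eta(z)\big]
\]
is the same skeleton the paper uses, your treatment of the decisive inequality $\limsup_{i,j}\gromprod{x_i}{y_j}{b}\le m$ is genuinely different and cleaner. The paper first rewrites $\gromprod{x_i}{y_j}{b}=-\inf_{x}\big[\distfn_{x_i}(x)+\distfn_{y_j}(x)\big]$, picks a near-minimiser $z_{ij}$ for each pair $(i,j)$, applies the almost non-increasing property to push from indices $(i,j)$ to larger indices $(k,l)$, and then chooses $k,l\ge N_{z_{ij}}$ to invoke pointwise convergence at $z_{ij}$ --- a small diagonal argument. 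You instead evaluate everything at the single point $y_j$ via the identity $\gromprod{x_i}{y_j}{b}=d(b,y_j)-\distfn_{x_i}(y_j)$, and control both terms with the two uniform-in-$z$ consequences of Proposition~\ref{prop:almost_non_inc_busemann}, namely $\distfn_{x_i}\ge\xi-\epsilon$ eventually and $\eta(y_j)\le\distfn_{y_j}(y_j)+\epsilon=-d(b,y_j)+\epsilon$ eventually; this dispenses with the near-minimisers entirely. Your first inequality is also phrased slightly differently (bounding below by $-\distfn_x(z)-\distfn_y(z)$ for each fixed $z$ and then taking suprema, rather than fixing an almost-optimal $z^*$ first), but that is cosmetic. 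Both arguments use the Busemann hypothesis only through the almost non-increasing characterisation, and your remark that the first two inequalities hold without it matches the paper. The only thing the paper's longer route buys is the intermediate identity $\gromprod{x_i}{y_j}{b}=-\inf_x\big[\distfn_{x_i}(x)+\distfn_{y_j}(x)\big]$, a finite analogue of the boundary formula, which is not needed for the statement itself.
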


\begin{proof}
First we note that for each $x$ and $y$ in $X$,
we have $d(x, y) = \inf_z [d(x, z) + d(z, y)]$. Thus, 
\begin{equation}
\label{eqn:finite_gromov_prod_as_infimum}
\gromprod {x} {y} {b}
    = -\inf_{z\in X} \big[d(x, z) + d(z, y) - d(x, b) - d(b, y)\big].
\end{equation}

Let $\epsilon>0$ be given, and let $z^*\in X$ be such that 
\begin{equation}
\label{eqn:almost_optimal_x}
\xi(z^*)+\eta(z^*)<\inf_{z\in X}\big[\xi(z)+\eta(z)\big] + \epsilon.
\end{equation}
Taking $V\in\mathcal{U}_\xi$ and $W\in\mathcal{U}_\eta$ such that 
\begin{align*}
d(x, z^*) - d(x,b) &< \xi(z^*)+\epsilon,
\qquad\mbox{for all $x\in V \intersection X$}, \\
\qquad\text{and}\quad
d(z^*, y) - d(b, y) &< \eta(z^*)+\epsilon,
\qquad\mbox{for all $y\in W \intersection X$},
\end{align*}
we find for $x\in V \intersection X$ and $y\in W \intersection X$ that 
\begin{align*}
\inf_{z\in X} \big[ d(x, z) + d(z, y) - d(x, b) - d(b, y) \big]
    &\leq d(x, z^*) + d(z^*, y) - d(x, b) - d(b, y)\\ 
    &<    \xi(z^*)+\eta(z^*)+2\epsilon\\
    &<    \inf_{z\in X}\big[\xi(z)+\eta(z)\big] + 3\epsilon.
\end{align*}
Using~\eqref{eqn:finite_gromov_prod_as_infimum}, we get
\begin{equation*}
\inf_{x\in V\cap X,\, y\in W\cap X} \gromprod {x} {y} {b}
    \geq -\inf_{z\in X}\big[\xi(z)+\eta(z)\big] -3\epsilon,
\end{equation*}
and so
$\gromprod {\xi} {\eta} {b} \geq -\inf_{z\in X} \big[\xi(z) + \eta(z)\big]$.

Next we show that the second expression is no less than the third.
Note that for each $z\in X$ there exists $N_z\geq 1$ such that 
\[
\psi_{x_i}(z)\geq \xi(z)-\epsilon
\qquad\mbox{and}\qquad
\psi_{y_j}(z)\geq \eta(z)-\epsilon,
\]
for all $i,j\geq N_z$. Thus,
\[
\psi_{x_i}(z)+\psi_{y_j}(z)
    \geq \xi(z)+\eta(z) -2 \epsilon
    \geq \inf_{x\in X} \big[\xi(x)+\eta(x)\big] -2\epsilon,
\]
for all $i,j\geq N_z$.
As $\psi_{x_i}$ and $\psi_{y_j}$ are both almost non-increasing sequences
of functions, there exists an $N_1\geq 1$ such that,
\[
\psi_{x_k}(x) -\epsilon\leq \psi_{x_i}(x)
\qquad\mbox{and}\qquad
\psi_{y_l}(x) -\epsilon\leq \psi_{y_j}(x),
\]
for all $x\in X$, and for all $k\geq i\geq N_1$ and $l\geq j\geq N_1$. 

For each $i,j\geq N_1$, let $z_{ij}\in X$ be such that 
\[
\inf_{x\in X} \big[\psi_{x_i}(x) +\psi_{y_j}(x)\big]
    \geq \psi_{x_i}(z_{ij})+\psi_{y_j}(z_{ij}) -\epsilon.
\]
Using the previous inequalities, we find that
\[
\inf_{x\in X}\big[\psi_{x_i}(x) +\psi_{y_j}(x)\big]
    \geq \psi_{x_k}(z_{ij})+\psi_{y_l}(z_{ij}) -3\epsilon,
\]
for each $k\geq i\geq N_1$ and each $l\geq j\geq N_1$. 

Thus, for each $k\geq i\geq N_1$ and each $l\geq j\geq N_1$ with $k,l\geq N_{z_{ij}}$, we have
\[
\inf_{x\in X}\big[\psi_{x_i}(x) +\psi_{y_j}(x)\big]
    \geq  \inf_{x\in X}\,\big[\xi(x)+\eta(x)\big] -5\epsilon.
\]
This implies that 
\[
\limsup_{i,j\to\infty}\, \gromprod {x_i} {y_j} {b}
    = \limsup_{i,j\to\infty}
          \Big(-\inf_{x\in X} \big[\psi_{x_i}(x) + \psi_{y_j}(x)\big]\Big)
    \leq -\inf_{x\in X}\, \big[ \xi(x)+\eta(x) \big].
\]

To complete the proof, observe that
\begin{equation*}
\gromprod {\xi} {\eta} {b}
    = \liminf_{x\to\xi,\, y\to \eta} \gromprod {x} {y} {b}
    \leq \liminf_{i,j\to\infty} \gromprod {x_i} {y_j} {b}.
\qedhere
\end{equation*}
\end{proof}

\section{distance-preserving maps and the horofunction boundary}
\label{sec:isometric_embeddings}

\begin{figure}
\input{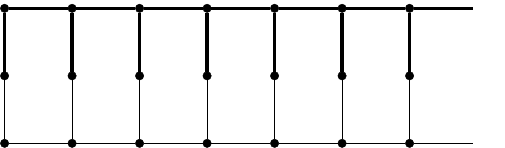_t}
\caption{A metric space (bold) embedded in another (bold and non-bold).
Two sequences, along the top and middle respectively, converge to the same
horofunction in the smaller space, but not in the larger one.}
\label{fig:halfladder}
\end{figure}

When there is a {\em surjective} distance-preserving map between
two metric spaces, the map extends to a homeomorphism between
their horofunction compactifications.
In the absence of surjectivity, however, the situation is more complicated,
as the map does not necessarily extend continuously to a map between the boundaries.

Consider for example the metric spaces depicted in Figure~\ref{fig:halfladder}.
The space in bold is isometrically embedded in a larger space.
The metric in both cases is the \emph{path length metric},
where the distance between two points is the Euclidean length
of the shortest path joining the two points
that remains within the metric space.
The smaller space has only one horofunction,
and both sequences $(a_n)$, along the top, and $(b_n)$, along the middle,
converge to it.
On the contrary, the same sequences in the larger space converge
to different points.
Note that $(a_n)$ is an almost-geodesic (in fact a geodesic) in both spaces,
while $(b_n)$ is not an almost-geodesic in either space.

When we restrict our attention to almost-geodesic sequences,
the situation becomes more satisfying.
Of course, an almost-geodesic is mapped by a distance-preserving map 
to an almost-geodesic.
We shall see in addition that, if two almost-geodesics converge to the same
Busemann point in the domain, then their images converge to the same point.

\begin{definition}
A map $\phi \colon X \to Y$ between two metric spaces $(X, d_X)$ and $(Y, d_Y)$
is \emph{distance-preserving} if $d_Y(\phi(x), \phi(y)) = d_X(x, y)$,
for all $x, y\in X$.
\end{definition}

Recall the following~\cite[Lemma 3.18]{haettel_schilling_walsh_wienhart}.

\begin{proposition}
\label{prop:back_and_forth}
Two almost-geodesic sequences $(x_n)$ and $(y_n)$ in a metric space
converge to the same Busemann point if and only if
there exists an almost-geodesic sequence $(z_n)$
that has infinitely many points in common with both $(x_n)$ and $(y_n)$.
\end{proposition}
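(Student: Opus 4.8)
The plan is to prove the two implications separately; the reverse direction is a short subsequence argument, while the forward direction needs an explicit interleaving construction. For the reverse implication, suppose $(z_n)$ is an almost geodesic having infinitely many terms in common with each of $(x_n)$ and $(y_n)$. By Rieffel's result that every almost geodesic converges, $(z_n)$ tends to some Busemann point $\zeta$. The terms shared with $(x_n)$ form a subsequence of $(z_n)$, hence converge to $\zeta$; but this is also a subsequence of the almost geodesic $(x_n)$, which converges to a unique horofunction, forcing that horofunction to be $\zeta$. Thus $(x_n) \to \zeta$, and the identical argument gives $(y_n)\to\zeta$, so the two sequences share the same limit.

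For the forward implication, assume $(x_n)$ and $(y_n)$ both converge to the Busemann point $\xi$. Since changing the base-point alters each horofunction only by an additive constant and preserves all the notions involved, I would fix the base-point $b := x_1$ and build an almost geodesic $(z_n)$ with $z_0 := x_1$, drawing the odd-indexed terms from $(x_n)$ and the even-indexed terms from $(y_n)$, with strictly increasing indices within each sequence (so that infinitely many terms of each are used). Fixing a sequence $\epsilon_k \downarrow 0$, I would choose the terms inductively: at stage $k$, pick $z_k$ far enough along its sequence that both (a) $d(z_0, z_k) + \xi(z_k) \le \epsilon_k$, and (b) $|d(z_i, z_k) - d(z_0, z_k) - \xi(z_i)| \le \epsilon_k$ for every $i < k$. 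Condition (b) is achievable because, with base-point $z_0$, the quantity $d(z_i, z_m) - d(z_0, z_m)$ tends to $\xi(z_i)$ as $z_m$ runs to infinity along either sequence, and only finitely many $i$ must be controlled. Condition (a) is achievable precisely because $\xi$ is Busemann: along any almost geodesic converging to $\xi$ one has $d(z_0, z_m) + \xi(z_m) \to H(\xi,\xi) = 0$.

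It then remains to check that $(z_n)$ is an almost geodesic. For $0 < i < j$, combining (b) at stage $j$ with (a) at stage $i$, and noting $d(z_0,z_i)+\xi(z_i)\ge 0$, yields
\[
d(z_0, z_i) + d(z_i, z_j) - d(z_0, z_j)
  = \big(d(z_0, z_i) + \xi(z_i)\big) + \theta_{ij}
  \le \epsilon_i + \epsilon_j,
\]
where $|\theta_{ij}| \le \epsilon_j$, the case $i=0$ being trivial. Given $\epsilon > 0$, choosing $K$ with $2\epsilon_K \le \epsilon$ makes this excess at most $\epsilon$ for all $K\le i\le j$, which is exactly the defining inequality. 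Since $(z_n)$ contains infinitely many terms of each of $(x_n)$ and $(y_n)$, it is the sequence sought.

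The main obstacle is the forward direction: one must guarantee the almost-geodesic inequality for \emph{all} pairs $i \le j$, not merely for the consecutive transitions between the two sequences. The resolution is the observation that, once $z_j$ is taken far enough, the excess $d(z_0, z_i) + d(z_i, z_j) - d(z_0, z_j)$ depends, up to arbitrarily small error, only on the single quantity $d(z_0, z_i) + \xi(z_i)$, and that this quantity tends to $H(\xi,\xi) = 0$ exactly because $\xi$ is a Busemann point. Recognising that controlling this one quantity at each stage suffices, and that its vanishing is precisely the Busemann property, is the crux of the argument.
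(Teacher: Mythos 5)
Your argument is correct. Note, however, that the paper does not prove this proposition at all: it is quoted verbatim as Lemma~3.18 of Haettel--Schilling--Walsh--Wienhard, so there is no internal proof to compare against; what you have supplied is a self-contained proof of a cited result. Both directions hold up. In the reverse direction, the only point you gloss over is that the infinitely many common terms, enumerated as a subsequence of $(z_n)$, need not appear in the same order as they do in $(x_n)$; this is harmless because convergence in the horofunction compactification is a tail property and is therefore invariant under rearrangement, so the common set forces $\xi = \zeta$ regardless of ordering. In the forward direction, your construction is the standard interleaving, and the key observation is exactly right: once condition (b) pins $d(z_i,z_j)-d(z_0,z_j)$ to within $\epsilon_j$ of $\xi(z_i)$, the excess of the triangle inequality reduces to the single quantity $d(z_0,z_i)+\xi(z_i)$, which condition (a) controls, and (a) is achievable along the tails of $(x_n)$ and $(y_n)$ precisely because $H(\xi,\xi)=0$ for a Busemann point and the detour cost can be computed as a limit along any almost geodesic converging to $\xi$ (the paper's quoted Lemma~2.6 of Walsh). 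The choice of base-point $b=x_1$ is legitimate since both the Busemann property and the almost-geodesic condition are base-point independent.
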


\begin{corollary}
\label{prop:mapping_of_busemann_points}
Let $\phi \colon X \to Y$ be a distance-preserving map between metric spaces.
Then, there exists an injective map, which we also denote by $\phi$,
from the set of Busemann points of $X$ to those of $Y$,
with the property that every almost-geodesic $(x_n)$
converging to a Busemann point $\xi$ gets mapped to an almost-geodesic
$(\phi(x_n))$ converging to the Busemann point $\phi (\xi)$.
\end{corollary}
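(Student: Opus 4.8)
The plan is to build the boundary map by transporting the construction from domain to co-domain along almost-geodesic sequences, using Proposition~\ref{prop:back_and_forth} to guarantee that the definition does not depend on the chosen sequence. First I would fix a Busemann point $\xi \in X(\infty)$ and choose an almost-geodesic $(x_n)$ in $X$ converging to $\xi$; such a sequence exists by the definition of a Busemann point. Since $\phi$ preserves distances, the image sequence $(\phi(x_n))$ satisfies exactly the same inequalities defining an almost geodesic, so it is an almost geodesic in $Y$. By Rieffel's result, recalled in the excerpt, every almost geodesic converges to a limit in the horofunction boundary, and that limit is by definition a Busemann point; call it $\phi(\xi) \in Y(\infty)$. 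I would then \emph{define} the boundary map by $\xi \mapsto \phi(\xi)$.

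The first thing to verify is that this is well defined, i.e.\ independent of the almost-geodesic chosen to represent $\xi$. Suppose $(x_n)$ and $(x'_n)$ are two almost geodesics in $X$ both converging to $\xi$. By Proposition~\ref{prop:back_and_forth}, there is a third almost geodesic $(z_n)$ sharing infinitely many points with each of them. Applying $\phi$, the sequence $(\phi(z_n))$ is an almost geodesic in $Y$ sharing infinitely many points with both $(\phi(x_n))$ and $(\phi(x'_n))$; note here that $\phi$ being distance-preserving (hence injective) means common points are genuinely preserved. Invoking Proposition~\ref{prop:back_and_forth} in the reverse direction in $Y$, the images $(\phi(x_n))$ and $(\phi(x'_n))$ converge to the same Busemann point. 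Hence $\phi(\xi)$ depends only on $\xi$, and simultaneously this argument establishes the stated property that any almost geodesic converging to $\xi$ is mapped to an almost geodesic converging to $\phi(\xi)$.

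For injectivity, suppose $\phi(\xi) = \phi(\eta)$ for Busemann points $\xi, \eta \in X(\infty)$. Pick almost geodesics $(x_n) \to \xi$ and $(y_n) \to \eta$; their images $(\phi(x_n))$ and $(\phi(y_n))$ are almost geodesics converging to the same Busemann point in $Y$. By Proposition~\ref{prop:back_and_forth} applied in $Y$, there is an almost geodesic in $Y$ meeting both image sequences in infinitely many points. Because $\phi$ is injective, I would pull this back: the common points lie in $\phi(X)$, so their $\phi$-preimages form a sequence in $X$ that is almost geodesic (distances are preserved) and shares infinitely many points with both $(x_n)$ and $(y_n)$. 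A final application of Proposition~\ref{prop:back_and_forth}, now in $X$, forces $\xi = \eta$.

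The step I expect to require the most care is the pull-back in the injectivity argument, and more generally the bookkeeping around ``infinitely many points in common.'' The subtlety is that $\phi$ is not assumed surjective, so a sequence living in $Y$ need not come from $X$; the argument works only because the relevant points are precisely the shared points $\phi(z_n)$, which \emph{do} lie in the image. The cautionary Figure~\ref{fig:halfladder} shows why one cannot hope to extend $\phi$ continuously to the whole boundary --- arbitrary convergent sequences may split apart in the larger space --- so the entire construction must be routed through almost geodesics, where Proposition~\ref{prop:back_and_forth} provides the rigidity that ordinary convergence lacks.
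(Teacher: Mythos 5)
Your proposal is correct and follows essentially the same route as the paper: define $\phi(\xi)$ via the image of an almost-geodesic, use Proposition~\ref{prop:back_and_forth} in both directions for well-definedness and injectivity, and handle the lack of surjectivity by restricting the connecting almost-geodesic to the shared points (a subsequence, hence still an almost-geodesic) before pulling back. The paper makes this last point by noting the connecting sequence ``may furthermore be chosen to consist entirely of points of'' the two image sequences, which is exactly the fix you identify.
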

\begin{proof}
Given any Busemann point $\xi$ of $X$, take an almost-geodesic $(x_n)$
converging to it, and define $\phi (\xi)$ to be the limit of $\phi(x_n)$.
That this is independent of the almost-geodesic chosen follows from
Proposition~\ref{prop:back_and_forth}.
For the injectivity, we take two almost-geodesics $(y_n)$ and $(z_n)$
in $\phi(X)$ converging to the same Busemann point.
We then apply Proposition~\ref{prop:back_and_forth} to get an almost-geodesic
that has infinitely many points in common with both; this sequence may
furthermore be chosen to consist entirely of points of $(y_n)$ and $(z_n)$.
As such, it has a preimage, which is also an almost-geodesic and so converges
to a Busemann point of $X$.
This shows that the preimages of $(y_n)$ and $(z_n)$ have the same limit.
\end{proof}

As well as this, distance-preserving maps preserve the detour cost
of every pair of Busemann points and  their Gromov product,
assuming that the basepoint is mapped to the basepoint.
The latter statement follows from
Proposition~\ref{prop:alt_formula_for_gromov_prod}.

\section{The horofunction boundary of bounded symmetric domains}

Chu--Cueto-Avellaneda--Lemmens~\cite{chu_cueto_avellaneda_lemmens}
have determined the horofunction boundary of a finite dimensional bounded symmetric domain $D$
under the Carath\'eodory distance. 
Recall that the Carath\'eodory  distance on a domain
$\Omega\subseteq \mathbb{C}^n$ is given by 
\[
d(x,y) := \sup \Big\{ \omega \big (f(x),f(y) \big) \mid  f \in H(\Omega, \mathbb{D}) \Big\},
\qquad \text{for all $x,y\in \Omega$},
\]
where $H(\Omega,\mathbb{D})$ is the set of all holomorphic functions
$f\colon  \Omega\longrightarrow \mathbb{D}$
and $\omega$ is the hyperbolic distance on the unit disc
$\mathbb{D}:=\{z\in \mathbb{C}\mid |z|<1\}$, namely 
\[
\omega(z,w) := \tanh^{-1}\left( \left|\frac{z-w}{1-z\overline{w}}\right| \right) = \frac{1}{2}\log\left(\frac{ 1+ \left |\frac{z-w}{1-z\overline{w}}\right |}{1- \left|\frac{z-w}{1-z\overline{w}}\right|}\right).
\]
In the case of a bounded symmetric domain $D$, we have the following formula
(see~\cite[Theorem 3.5.9]{chu_Jordan_structures_in_geometry_and_analysis}):
\[
d(x,y) = \tanh^{-1}\|g_{-y}(x)\|,
\qquad\text{for all $x,y\in D$}.
\]

  .
It is known that, for bounded symmetric domains,
all the horofunctions are Busemann points and take the following form.

\begin{theorem}[Chu--Cueto-Avellaneda--Lemmens]
\label{thm:busemann_points_of_bsd}
Let $D$ be a finite dimensional bounded symmetric domain represented as the open unit ball of a
JB*-triple of rank $r$.
Every horofunction $\xi$ is a Busemann point and is of the form
\begin{equation}\label{xi}
\xi(z)
    = \frac {1} {2} \log \Big\lVert
      \sum_{1\le i\le j\le p}
          \lambda_i \lambda_j B(z, z)^{-1/2} B(z, e) P_{ij}
      \Big\rVert,
\qquad\text{for all $z\in D$},
\end{equation}
where $p \in \{1, \dots, r\}$, $0<\lambda_1, \dots, \lambda_p\leq 1$
with $\max_i \lambda_i = 1$,
and $e_1, \dots, e_p$ are mutually orthogonal nonzero minimal tripotents
with $e := e_1+\cdots +e_p\in\partial D$
and the $P_{ij} \colon V \to V$ are the corresponding joint Peirce projections.
\end{theorem}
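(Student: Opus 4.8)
The plan is to take the basepoint $b=0$ and exploit the explicit formula $d(x,y)=\tanh^{-1}\|g_{-y}(x)\|$. A horofunction is the limit $\xi(z)=\lim_n\big(d(z,z_n)-d(0,z_n)\big)$ along some sequence $(z_n)$ with $z_n\to\partial D$; since $d(z,z_n)\to\infty$ we have $\|g_{-z_n}(z)\|\to 1$ and $\|z_n\|\to 1$. Writing $\tanh^{-1}t=\tfrac12\log\frac{1+t}{1-t}$, the two factors $1+\|\cdot\|$ cancel in the limit, so
\[
\xi(z)=\lim_n \tfrac12\log\frac{1-\|z_n\|^2}{1-\|g_{-z_n}(z)\|^2}.
\]
I would then rewrite each factor $1-\|w\|^2$ operator-theoretically: using the spectral decomposition $w=\sum_i\sigma_i u_i$ and that $B(w,w)$ acts on $V_{ij}(u_1,\dots)$ as multiplication by $(1-\sigma_i^2)(1-\sigma_j^2)$, its smallest eigenvalue is $(1-\|w\|^2)^2$, whence $1-\|w\|^2=\|B(w,w)^{-1/2}\|^{-1}$. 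Next I would invoke the transformation rule for the Bergman operator under Möbius maps,
\[
B(g_{-y}(x),g_{-y}(x))=B(y,y)^{1/2}B(x,y)^{-1}B(x,x)\,B(y,x)^{-1}B(y,y)^{1/2},
\]
a standard JB*-triple identity (verified directly in the disc). As $B(x,y)^*=B(y,x)$, the right side is $MM^*$ with $M=B(y,y)^{1/2}B(x,y)^{-1}B(x,x)^{1/2}$, so $\|B(g_{-y}(x),g_{-y}(x))^{-1/2}\|=\|M^{-1}\|=\|B(x,x)^{-1/2}B(x,y)B(y,y)^{-1/2}\|$. With $y=z_n$, $x=z$ this reduces everything to identifying the limit of a single operator:
\[
\frac{1-\|z_n\|^2}{1-\|g_{-z_n}(z)\|^2}=\big\|B(z,z)^{-1/2}B(z,z_n)\,S_n\big\|,\qquad S_n:=\frac{B(z_n,z_n)^{-1/2}}{\|B(z_n,z_n)^{-1/2}\|}.
\]

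The heart of the argument is the asymptotic analysis of $S_n$, and this is the step I expect to require the most care. Writing $z_n=\sum_i\sigma_i^{(n)}u_i^{(n)}$ with $\sigma_1^{(n)}\ge\sigma_2^{(n)}\ge\cdots$, I would pass to a subsequence along which all minimal tripotents $u_i^{(n)}$ and singular values converge. Let $e_1,\dots,e_p$ be the limits of those tripotents whose singular value tends to $1$, set $e:=e_1+\cdots+e_p$, and define $\lambda_i:=\lim_n\big((1-(\sigma_1^{(n)})^2)/(1-(\sigma_i^{(n)})^2)\big)^{1/2}$ for $i\le p$; then $0<\lambda_i\le 1$, $\lambda_1=1$, and $p\le r$. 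On $V_{ij}(u^{(n)})$ the operator $S_n$ is the scalar $(1-(\sigma_1^{(n)})^2)/\sqrt{(1-(\sigma_i^{(n)})^2)(1-(\sigma_j^{(n)})^2)}$, which tends to $\lambda_i\lambda_j$ when $i,j\le p$ and to $0$ otherwise. Using that the joint Peirce projections of a convergent system of orthogonal tripotents converge, this gives the operator-norm limit $S_n\to\Pi:=\sum_{1\le i\le j\le p}\lambda_i\lambda_jP_{ij}$, where the $P_{ij}$ are the joint Peirce projections of $e_1,\dots,e_p$. The delicate point here is the bookkeeping by which the sub-unit directions and the index-$0$ directions all collapse into the index-$0$ Peirce space of $e$.

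It then remains to take the limit of the other factors and to simplify. By continuity $B(z,z_n)\to B(z,w_\infty)$, where $w_\infty=\lim_n z_n=e+w_0$ with $w_0\in V_0(e)$ the residual contribution of the sub-unit directions. A clean cancellation disposes of $w_0$: the range of $\Pi$ lies in $\bigoplus_{1\le i\le j\le p}V_{ij}\subseteq V_2(e)$, and for $x\in V_2(e)$, $w_0\in V_0(e)$ the Peirce rules $\{V_2(e),V_0(e),V\}=\{V_0(e),V_2(e),V\}=\{0\}$ force $\{z,w_0,x\}=0$, $\{e,x,w_0\}=0$ and $\{w_0,x,w_0\}=0$, so that $B(z,e+w_0)x=B(z,e)x$. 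Hence $B(z,w_\infty)\Pi=B(z,e)\Pi$, and passing to the limit gives
\[
\xi(z)=\tfrac12\log\big\|B(z,z)^{-1/2}B(z,e)\,\Pi\big\|=\tfrac12\log\Big\|\sum_{1\le i\le j\le p}\lambda_i\lambda_j\,B(z,z)^{-1/2}B(z,e)P_{ij}\Big\|,
\]
which is precisely \eqref{xi}; since $\|e\|=1$ we have $e\in\partial D$, and the stated constraints on $p$, the $\lambda_i$ and the $e_i$ all hold.

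Finally, to establish that every horofunction is a Busemann point, I would realize each admissible datum $(e,(\lambda_i))$ by an explicit geodesic ray. Consider the radial curve $z(t):=\sum_{i=1}^p\tanh\!\big(t+\log\lambda_i\big)e_i$. A direct computation from the distance formula, using that $g_{-z(s)}(z(t))$ stays in the polydisc spanned by $e_1,\dots,e_p$ and has norm equal to the supremum of its disc components, shows $d(0,z(t))=t$ and $d(z(s),z(t))=t-s$ for $s\le t$; thus $z(\cdot)$ is a genuine geodesic ray. Its singular values approach $1$ at exactly the rates encoding the prescribed $\lambda_i$, so by the computation above it converges to the horofunction \eqref{xi}. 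Since every horofunction coincides with such a function and is therefore the limit of a geodesic ray, all horofunctions are Busemann points.
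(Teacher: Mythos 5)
This theorem is imported by the paper: it is stated as a result of Chu--Cueto-Avellaneda--Lemmens and cited from \cite{chu_cueto_avellaneda_lemmens} with no proof given here, so there is no internal proof to compare against. Your reconstruction follows essentially the same strategy as that reference: reduce $\xi(z)$ to the limit of $\tfrac{1}{2}\log\big((1-\|z_n\|^2)/(1-\|g_{-z_n}(z)\|^2)\big)$, convert the ratio into the norm of $B(z,z)^{-1/2}B(z,z_n)B(z_n,z_n)^{-1/2}/\|B(z_n,z_n)^{-1/2}\|$ via the Bergman-operator transformation rule, analyse the normalised operator through the joint Peirce decomposition of the spectral frame of $z_n$, and finally realise each admissible datum by the geodesic ray $t\mapsto\sum_i\tanh(t+\log\lambda_i)e_i$ --- the very ray the paper later quotes as \cite[Lemma~8.3]{chu_cueto_avellaneda_lemmens}. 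The overall architecture is sound, including the cancellation $B(z,e+w_0)\Pi=B(z,e)\Pi$ via the Peirce rule $\{V_2(e),V_0(e),V\}=\{0\}$.

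One step needs repair. You define $e_1,\dots,e_p$ as the limits of those tripotents whose singular values tend to $1$, and then assert $0<\lambda_i$. As written this is false: if $\sigma_1^{(n)}=1-e^{-n}$ while $\sigma_2^{(n)}=1-1/n$, both tend to $1$ but your $\lambda_2$ equals $0$, contradicting the constraint $\lambda_i>0$ in the statement. The correct bookkeeping is to pass to a subsequence along which every ratio $\big((1-(\sigma_1^{(n)})^2)/(1-(\sigma_i^{(n)})^2)\big)^{1/2}$ converges in $[0,1]$ and to let $p$ count the indices with strictly positive limit; the discarded indices, whether or not their singular values tend to $1$, contribute $0$ to the limit of $S_n$ and their limit tripotents still land in $V_0(e)$, so the remainder of your argument is unchanged. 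Two smaller points deserve a sentence each: first, that $P_{ij}(u_1,\dots,u_r)=P_{ij}(e_1,\dots,e_p)$ for $1\le i\le j\le p$ (true, but it uses minimality of the $e_i$ and is not a tautology, since a priori the joint Peirce spaces of the sub-family could be larger); second, that the operator norm your $MM^*$ computation produces is the trace-form Hilbert norm, whereas \eqref{xi} is read with the operator norm induced by the JB*-triple norm --- the two agree on the operators in question because both compute $(1-\|g_{-y}(x)\|^2)^{-1}$, but this should be said rather than left implicit.
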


In \cite[Proposition 8.5]{chu_cueto_avellaneda_lemmens},
it was shown that two horofunctions $\xi$ and $\eta$ with tripotents
$e$ and $c$, respectively, are in the same part if and only if $e=c$.
In this section, we will give an explicit formula for the detour cost
and the detour metric. In fact, we shall show that each part
with the detour metric is isometric to a Hilbert metric space
on a symmetric cone.  

Recall from~\cite[3.13]{loos_bounded_symmetric_domains_and_jordan_pairs} that,
given a tripotent $e\in V$, its Peirce 2-space $V_2(e)$ is a JB*-algebra
with product $x\bullet y = \{x,e,y\}$, involution $x^* =\{e,x,e\}$,
and unit $e$.
Moreover, if we let $A=A(e):= \{x\in V_2(e) \mid \{e,x,e\} =x\}$
be the self-adjoint part of $V_2(e)$,
then $A$ is a real closed subalgebra of the JB*-algebra $V_2(e)$
that forms a JB-algebra with cone of squares $A_+=\{x^2\colon x\in A\}$,
and $V_2(e)=A+\mathrm{i}A$. 

We write $D_e := V_2(e) \cap D$ and denote the \emph{Cayley transform}
by $c_e\colon D_e \to A+\mathrm{i}A_+^\circ$, so 
\[
c_e(z) := \mathrm{i}(e+z)(e-z)^{-1},
\qquad \text{for all $z\in D_e$}.
\]
The Cayley transform is a biholomorphic map;
see \cite[Example 2.4.18 and Section 3.6]
{chu_bounded_symmetric_domains_in_banach_spaces}, 
and hence a Carath\'eodory distance isometry,
and it maps $D_e\cap A$ onto $\mathrm{i}A_+^\circ$.
Here, $A_+^\circ$ denotes the interior of the cone  $A_+$,
and $a^{-1}$ is the (unique) element in the JB*-algebra $V_2(e)$
such that $a\bullet a^{-1}= e$ and $a^2\bullet a^{-1} = a$. 

On the open cone $A_+^\circ$ in the JB-algebra $A$,
there is a natural metric $d_T$ known as the {\em Thompson metric},
which is defined in terms of the partial order on $A$ induced by $A_+$,
namely, $x\leq y$ if $y-x\in A_+$.
More specifically, for $x\in A_+$ and $y\in A_+^\circ$, let 
\[
M(x/y) := \inf\{\lambda>0\mid x\leq \lambda y\}
       =\sup\left\{ \frac {\rho(x)} {\rho(y)}
            \mid \text{$\rho\in A^*$ with $\rho(e)=1$} \right\},   
\]
and define
\[
d_T(x,y) := \max \big\{\log M(x/y), \log M(y/x) \big\},
\qquad \text{for all $x,y\in A_+^\circ$}.
\]
It was shown by Vesentini~\cite{Ve} that
the Thompson distance on $A_+^\circ$
is related to
the restriction of the Carath\'eodory distance $d$ on
$A+\mathrm{i}A_+^\circ$ to $\mathrm{i}A_+^\circ$
as follows: 
\begin{equation}
\label{Ves}
d_T(x,y) = 2d(\mathrm{i}x,\mathrm{i}y),
\qquad \text{for all $x,y\in A_+^\circ$}.
\end{equation}
(Note that the factor 2 does not appear in~\cite{Ve}, as the factor $1/2$ in the hyperbolic metric on $\mathbb{D}$
in the definition of the Carath\'eodory distance is omitted there.) 

Let $\xi$ and $\eta$ be two horofunctions, with $\xi$ given by \eqref{xi}
and $\eta$ given by 
\begin{equation}
\label{eta}
\eta(z)
    = \frac {1} {2} \log \Big\lVert
      \sum_{1\le i\le j\le q}
          \mu_i \mu_j B(z, z)^{-1/2} B(z, c) P'_{ij}
      \Big\rVert,
\qquad\text{for all $z\in D$},
\end{equation}
where $q \in \{1, \dots, r\}$, $c=c_1+\cdots +c_p$ is a nonzero tripotent,
the $P'_{ij} \colon V \to V$ are the joint Peirce projections induced by
the mutually orthogonal minimal tripotents $c_1, \dots, c_q$, 
and $0<\mu_1, \dots, \mu_q\leq 1$ with $\max_i \mu_i = 1$.
Our next lemma, which determines the detour cost $H(\xi,\eta)$,
is an extension
of~\cite[Propositions 8.4 and 8.5]{chu_cueto_avellaneda_lemmens}.
Recall that if $c\leq e$, then  $e-c$ is a tripotent
and $(e-c)\smallsquare c =0$.

\begin{lemma}
\label{lem:relate_order_and_finiteness_of_detour}
Let $\xi$ and $\eta$ be horofunctions of a bounded symmetric domain,
where $\xi$ is given by \eqref{xi} and $\eta$ is given by \eqref{eta}. 
Then, $H(\xi, \eta) < \infty$ if and only if $c \le e$.
Moreover, if $c \le e$, then 
\begin{equation}
\label{H}
H(\xi,\eta) = \frac{1}{2}\log M(b/a),
\end{equation}
where $a := \sum_{i=1}^p \lambda_i^2e_i\in A_+^\circ$
and  $b := \sum_{i=1}^q \mu_i^2 c_i\in A_+$.
\end{lemma}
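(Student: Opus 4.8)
The plan is to compute the detour cost $H(\xi,\eta)$ directly from its definition as a limit along an almost-geodesic converging to $\xi$. First I would recall from Theorem~\ref{thm:busemann_points_of_bsd} and the structure of the horofunctions that each $\xi$ of the form \eqref{xi} is a Busemann point, so there is an almost-geodesic $(z_n)$ converging to it, and along such a sequence the detour cost simplifies to $H(\xi,\eta) = \lim_{n\to\infty}\big(d(b,z_n) + \eta(z_n)\big)$. A natural candidate for the almost-geodesic is the radial-type sequence running to the tripotent $e$ through the cone direction determined by $a = \sum_i \lambda_i^2 e_i$; concretely, after applying the Cayley transform $c_e$ one expects this to correspond to a Thompson-geodesic ray in $A_+^\circ$ heading to infinity in the direction of $a$. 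I would use the identity $d(x,y)=\tanh^{-1}\lVert g_{-y}(x)\rVert$ together with the Peirce/joint-Peirce calculus to evaluate the two terms.

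The central mechanism I anticipate is the reduction to the Thompson metric via Vesentini's formula \eqref{Ves}, $2d(\mathrm{i}x,\mathrm{i}y) = d_T(x,y)$. The tripotents $c$ and $e$ control which Peirce spaces are ``seen'' by $\eta$ and $\xi$ respectively, and the condition $c\le e$ should emerge precisely as the requirement that the sub-cone structure attached to $c$ is compatible with that of $e$ — if $c\not\le e$, the detour cost forces a term going to $+\infty$ because the almost-geodesic toward $e$ cannot control $\eta$'s contribution coming from a Peirce space transverse to $V_2(e)$. I would make this dichotomy rigorous by splitting $V$ according to the joint Peirce decomposition of the combined tripotent system and showing that the off-diagonal contributions either vanish (when $c\le e$, using $(e-c)\smallsquare c = 0$ and the orthogonality relations) or blow up (otherwise). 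Having localised everything inside $V_2(e)=A+\mathrm{i}A$, the computation becomes a statement about the JB-algebra $A$, its order unit $e$, and the functional $M(\cdot/\cdot)$.

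The key algebraic step, assuming $c\le e$, is to identify the limit $\lim_n (d(b,z_n)+\eta(z_n))$ with $\tfrac12\log M(b/a)$. Here I would transport the almost-geodesic and the horofunction $\eta$ through $c_e$ into the cone $A_+^\circ$, where $\xi$ becomes a Busemann point of the Thompson metric associated to the direction $a$, and $\eta$ restricts to a horofunction governed by $b=\sum_i \mu_i^2 c_i$. The expression $M(b/a)=\inf\{\lambda>0 : b\le \lambda a\}=\sup\{\rho(b)/\rho(a) : \rho(e)=1\}$ is exactly the quantity that controls the asymptotic excess of the triangle inequality along a Thompson-metric ray, via the formula for $d_T$ in terms of $M$, and the factor $\tfrac12$ comes straight from \eqref{Ves}. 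I would extract \eqref{H} by combining the sup-formula for $M$ with the horofunction expansion, where the $\liminf$ defining $H$ is realised along the chosen almost-geodesic.

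The hard part will be two-fold. First, justifying the explicit choice of almost-geodesic converging to $\xi$ and verifying it is genuinely almost-geodesic in the Carath\'eodory metric — the norm identity $d(x,y)=\tanh^{-1}\lVert g_{-y}(x)\rVert$ is not additive, so I would need the Peirce-space orthogonality $\lVert a+b\rVert=\max\{\lVert a\rVert,\lVert b\rVert\}$ for orthogonal elements to turn the composite M\"obius/Bergman expressions into a tractable maximum that behaves additively along the ray. Second, and more delicate, is controlling the \emph{off-diagonal} joint Peirce blocks $V_{ij}$ with $i\ne j$ in the horofunction formula: showing that when $c\le e$ these contribute nothing to the limiting value of $H$, while when $c\not\le e$ they are exactly what drives $H$ to infinity. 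This requires carefully tracking how $B(z,z)^{-1/2}B(z,e)P_{ij}$ behaves as $z=z_n\to e$, and the cleanest route is to pass everything through the Cayley transform so that the whole statement collapses to the already-understood horofunction geometry of the symmetric cone $A_+^\circ$ under the Thompson metric.
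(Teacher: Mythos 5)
Your proposal follows essentially the same route as the paper: evaluate $H(\xi,\eta)=\lim_t\bigl(d(0,\gamma(t))+\eta(\gamma(t))\bigr)$ along the radial almost-geodesic $\gamma(t)=\sum_i\tanh(t-\alpha_i)e_i$ converging to $\xi$, transport everything into the cone $A_+^\circ$ via the Cayley transform and Vesentini's formula \eqref{Ves}, and identify the limit with $\tfrac12\log M(b/a)$ using the known Thompson-metric horofunctions. The only divergence is that the paper does not reprove the forward implication ($H(\xi,\eta)<\infty\Rightarrow c\le e$) by the Peirce-block analysis you sketch, but simply cites it from the proof of Proposition~8.4 of \cite{chu_cueto_avellaneda_lemmens}, and likewise imports the almost-geodesic property of $\gamma$ from Lemma~8.3 there, so the two ``hard parts'' you flag are handled by reference rather than from scratch.
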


\begin{proof}
The forward implication appears in the proof of~\cite[Proposition 8.4]{chu_cueto_avellaneda_lemmens}.
To prove the backward implication, it suffices to show~\eqref{H}.

We know from~\cite[Lemma~8.3]{chu_cueto_avellaneda_lemmens} that the  path
\begin{equation*}
\gamma(t) := \sum_{i=1}^p \tanh(t - \alpha_i) e_i,
\end{equation*}
where  $\alpha_i = -\log \lambda_i$,
is a geodesic converging to $\xi$. Likewise, 
\begin{equation*}
\gamma'(t) := \tanh(0)(e-c) + \sum_{i=1}^q \tanh(t - \beta_i) c_i,
\end{equation*}
where  $\beta_i = -\log \mu_i$, is a geodesic converging to $\eta$. 

Using the fact that
$\tanh^{-1}(s) = \frac{1}{2}\log \left(\frac{1+s}{1-s}\right)$ for $-1<s<1$,
we get 
\[
c_e \big(\gamma'(t)\big)
    = \mathrm{i}\Big((e-c)+\sum_{i=1}^q e^{2t-2\beta_i}c_i \Big)
    \in \mathrm{i}A_+^\circ,
\qquad \text{for all $t>0$},
\]
which is a geodesic of the Carath\'eodory distance.
So, by \eqref{Ves} we see that 
\[
\sigma(t):= (e-c) +\sum_{i=1}^q e^{2t-2\beta_i}c_i 
\]
is a geodesic in $A_+^\circ$ with respect to $\frac{1}{2}d_T$.
It follows that 
\[
\psi_{\sigma(t)}(x)
    =\frac{1}{2}d_T \big(x,\sigma(t) \big)
         -\frac{1}{2}d_T \big(e, \sigma(t) \big)
\]
converges, as $t$ tends to infinity, to a horofunction, say $\eta_T$,
in $(A_+^\circ,\frac{1}{2}d_T)$.
These horofunctions have been analysed in~\cite{Le2}.
Following the proof of~\cite[Theorem 3.2]{Le2}, we find that 
\[
\eta_T(x) = \frac{1}{2}\log M(b/x),
\qquad \text{for all $x\in A_+^\circ$},
\]
where $b= \sum_{i=1}^q e^{-2\beta_i}c_i = \sum_{i=1}^q \mu_i^2 c_i$.
Indeed,
\[
\hat{\sigma}(t)
    := \frac {\sigma(t)} {e^{2t}}
    \to  \sum_{i=1}^q e^{-2\beta_i}c_i
    = b
\quad \text{and} \quad
\hat{\rho}(t)
     :=\frac {\sigma(t)^{-1}} {e^{2t}}
     \to 0,
\qquad
\text{as $t\to\infty$}.
\]
Thus, for $x\in A_+^\circ$, 
\begin{align*}
\psi_{\sigma(t)}(x)
    &= \frac{1}{2} \Big(d_T \big(x,\sigma(t) \big)- 2t \Big) \\
    &= \frac{1}{2} \max \Big\{ \log M \big(\hat{\sigma}(t)/x \big),
                          \log M \big( \hat{\rho}(t)/x^{-1} \big)\Big\} \\
    &\to \frac{1}{2}\log M(b/x).
\end{align*}
Here, we are using the fact that $M(x/y) = M(y^{-1}/x^{-1})$,
for all $x,y\in A_+^\circ$;
see for instance~\cite[p.~1518]{LRW}. 

Write $c_e \big(\gamma(t)\big) =: \mathrm{i}\tau(t)$, for $t>0$. 
So,
\[
\tau(t)= \sum_{i=1}^p e^{2t-2\alpha_i}e_i
\qquad\text{and} \quad
e^{-2t}\tau(t)\to \sum_{i=1}^p e^{-2\alpha_i} e_i = a.
\]
The function $x\in A^\circ_+\mapsto M(y/x)$ is continuous on $A_+^\circ$,
for each $y\in A_+$. Thus, 
\[
d\big(0, \gamma(t) \big) + \eta \big(\gamma(t)\big)
    = t + \frac{1}{2}\log M \big(b/\tau(t) \big)
    = \frac{1}{2}\log M \Big(b/\big(e^{-2t}\tau(t)\big) \Big)
    \to \frac{1}{2}\log M(b/a),
\]
as $t$ tends to infinity, and hence~\eqref{H} holds.
Note that $M(b/a)<\infty$, since $a\in A_+^\circ$ and $b\in A_+$.  
\end{proof}

On $A_+^\circ$ we also have the \emph{Hilbert metric}, 
\[
d_H(x,y) := \frac{1}{2}\log \big(M(x/y)M(y/x)\big),
\qquad \text{for all $x,y\in A_+^\circ$}.
\]
This is a metric between pairs of rays in $A_+^\circ$,
as $d_H(\lambda x,\mu y) = d_H(x,y)$,
for all $\lambda,\mu>0$ and $x,y\in A_+^\circ$. 
So, $d_H$ is a genuine metric on the cross-section
$\Sigma_e:=\{x\in A_+^\circ\mid M(x/e)=1\}$. 
As a consequence of the previous lemma,
each part of the horofunction boundary
of $(D,d)$ is isometric to a Hilbert metric space on symmetric cone.

\begin{theorem}
\label{Hilbert}
If $\mathcal {P}_\xi$ is the part containing the Busemann point $\xi$,
where $\xi$ is given by~\eqref{xi} with tripotent $e$,
then $(\mathcal{P}_\xi,\delta)$ is isometric to $(\Sigma_e, d_H)$. 
\end{theorem}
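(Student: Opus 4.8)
The plan is to establish the isometry $(\mathcal{P}_\xi, \delta) \to (\Sigma_e, d_H)$ by building on Lemma~\ref{lem:relate_order_and_finiteness_of_detour}, which already computes the asymmetric detour cost $H$. The first step is to identify which Busemann points lie in the part $\mathcal{P}_\xi$. By~\cite[Proposition 8.5]{chu_cueto_avellaneda_lemmens}, a horofunction $\eta$ of the form~\eqref{eta} lies in $\mathcal{P}_\xi$ precisely when its tripotent $c$ equals $e$; combined with Lemma~\ref{lem:relate_order_and_finiteness_of_detour}, the condition $c \le e$ together with the symmetric finiteness $e \le c$ forces $c = e$. So every point of $\mathcal{P}_\xi$ has the same tripotent $e$, hence uses the same minimal tripotents $e_1, \dots, e_p$ and differs only in the weights $\mu_i$. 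I would thus define a map $\Phi\colon \mathcal{P}_\xi \to \Sigma_e$ sending the horofunction $\eta$ with weights $\mu_i$ to (the ray through) $b := \sum_{i=1}^p \mu_i^2 e_i \in A_+^\circ$, normalised so that $M(b/e) = 1$.

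The second step is to verify that $\Phi$ is a well-defined bijection onto $\Sigma_e$. Since within the part both $\xi$ and $\eta$ have tripotent $e$, the elements $a = \sum \lambda_i^2 e_i$ and $b = \sum \mu_i^2 e_i$ both lie in the open cone $A_+^\circ$ (all coefficients are strictly positive). The normalisation $\max_i \lambda_i = 1$ in Theorem~\ref{thm:busemann_points_of_bsd} corresponds, after passing to rays, to a point on the cross-section $\Sigma_e$; conversely any element of $A_+^\circ$ diagonal in the frame $e_1, \dots, e_p$ arises this way, and an arbitrary element of $\Sigma_e$ can be diagonalised with respect to some frame of $A$. Here I should be careful: the horofunction is attached to a specific ordered system of minimal tripotents, so I must check that the map does not depend on the choice of diagonalising frame and that spectral theory in the JB-algebra $A$ lets me realise every ray in $A_+^\circ$. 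This is where the Jordan-algebraic structure of $A = A(e)$ as a formally real (euclidean) Jordan algebra enters, giving the spectral decomposition that puts an arbitrary positive element in diagonal form.

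The third and central step is the isometry computation. Using~\eqref{H} from Lemma~\ref{lem:relate_order_and_finiteness_of_detour}, for two points of $\mathcal{P}_\xi$ with associated cone elements $a$ and $b$ we have $H(\xi, \eta) = \tfrac12 \log M(b/a)$ and, by symmetry of the roles, $H(\eta, \xi) = \tfrac12 \log M(a/b)$. Adding these gives
\[
\delta(\xi, \eta) = H(\xi,\eta) + H(\eta,\xi)
    = \tfrac12 \log M(b/a) + \tfrac12 \log M(a/b)
    = \tfrac12 \log\big(M(a/b) M(b/a)\big) = d_H(a, b),
\]
which is exactly the definition of the Hilbert metric. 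The scale-invariance $d_H(\lambda a, \mu b) = d_H(a,b)$ shows this descends to the cross-section $\Sigma_e$, so $\Phi$ is distance-preserving for $\delta$ and $d_H$, completing the proof.

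I expect the main obstacle to be the second step rather than the third: the metric computation is essentially a bookkeeping consequence of the already-proven Lemma, but correctly matching the parametrisation of horofunctions (ordered minimal tripotents with a $\max_i = 1$ normalisation) against points of the cross-section $\Sigma_e$ requires invoking the spectral theory of the formally real Jordan algebra $A(e)$ to show surjectivity and to check that distinct frames giving the same positive element yield the same horofunction. In particular I must confirm that Lemma~\ref{lem:relate_order_and_finiteness_of_detour} applies symmetrically — it is stated for $\xi$ and $\eta$ with possibly different tripotents, and I need both $H(\xi,\eta)$ and $H(\eta,\xi)$ with the tripotents coinciding — and that the value $M(b/a)$ is the genuine cone quantity appearing in $d_H$, not merely its diagonal restriction.
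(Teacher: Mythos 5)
Your proposal follows the same route as the paper's proof: use Lemma~\ref{lem:relate_order_and_finiteness_of_detour} to see that every $\eta$ in the part has tripotent $c=e$, send $\eta\mapsto b=\sum_i\mu_i^2c_i$, and obtain $\delta=d_H$ by symmetrising the detour-cost formula~\eqref{H}. The metric computation in your third step is correct and is exactly the paper's argument.

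The one genuine error is in your first step: from $c=e$ you conclude that every point of $\mathcal{P}_\xi$ ``uses the same minimal tripotents $e_1,\dots,e_p$ and differs only in the weights $\mu_i$''. That is false in general: the tripotent $e$ admits many decompositions into orthogonal minimal tripotents (for the $2\times 2$ matrix ball with $e=I$, every rank-one projection $P$ gives a frame $\{P,\,I-P\}$ summing to $e$). If your claim were true, the image of $\Phi$ would consist only of the elements of $\Sigma_e$ that are diagonal in the fixed frame $e_1,\dots,e_p$ --- a simplex cross-section, not the cross-section of the full symmetric cone --- so $\Phi$ would not be surjective and the theorem would be false as stated. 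What is true, and what you actually need, is that points of the part share only the sum $e$, while the orthogonal families $c_1,\dots,c_q$ range over all frames of $V_2(e)$ decomposing $e$; surjectivity onto $\Sigma_e$ then follows from the spectral theorem in the JB-algebra $A(e)$, exactly as you anticipate in your second step. Your first and second steps therefore contradict one another; once the ``same minimal tripotents'' claim is dropped, the argument coincides with the paper's. As for your final worry, Lemma~\ref{lem:relate_order_and_finiteness_of_detour} is already stated with $b=\sum_i\mu_i^2c_i$ for a possibly different orthogonal family $c_i$, and $M(b/a)$ there is the genuine order-theoretic quantity of the cone $A_+$, not a diagonal restriction, so the formula applies verbatim to an arbitrary pair of points of the part.
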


\begin{proof}
Let $\eta\in P_\xi$ be given by \eqref{eta}.
Since $\xi$ and $\eta$ are in the same part, we get from
Lemma~\ref{lem:relate_order_and_finiteness_of_detour} that $c=e$.
Defining $b:=\sum_i \mu_i^2 c_i$, we have that $M(b/e)=1$,
since $\max_i \mu_i =1$.
So the map defined in this way that sends each $\eta \mapsto b$
is a surjective isometry
between $(\mathcal{P}_e,\delta)$ and $(\Sigma_e,d_H)$,
using Lemma~\ref{lem:relate_order_and_finiteness_of_detour} again.
\end{proof}

The singleton Busemann points, that is, the ones having no other point in their part, have a particularly simple form.

\begin{proposition}
\label{prop:singletons_of_bsd}
The singleton Busemann points of a bounded symmetric domain $D$
are the functions of the form
\begin{equation*}
\xi(z) = \frac {1} {2} \log \big\| B(z, z)^{-1/2} B(z, e) e \big\|,
\qquad\text{for all $z\in D$},
\end{equation*}
where $e$ is a minimal tripotent.
\end{proposition}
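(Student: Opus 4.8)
The plan is to read the singleton condition directly off Theorem~\ref{Hilbert}. Given a Busemann point $\xi$ with tripotent $e$, as in \eqref{xi}, that theorem identifies its part $(\mathcal{P}_\xi,\delta)$ with the cross-section $(\Sigma_e,d_H)$ of the symmetric cone $A_+^\circ$, where $A=A(e)$ is the self-adjoint part of $V_2(e)$. Since $d_H$ is a genuine metric on $\Sigma_e$ and isometric spaces have the same cardinality, $\xi$ is a singleton precisely when $\Sigma_e$ consists of a single point. The first step is therefore to determine when this occurs.

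Next I would show that $\Sigma_e$ is a single point exactly when $\dim_{\mathbb{R}} A(e)=1$. If $\dim_{\mathbb{R}}A(e)\ge 2$, then $A_+^\circ$ is an open cone in a real vector space of dimension at least two, so it meets more than one ray through the origin, and its cross-section $\{x\in A_+^\circ\mid M(x/e)=1\}$ accordingly contains more than one point; conversely, if $A(e)=\mathbb{R}e$, then $A_+^\circ=\{te\mid t>0\}$ and $\Sigma_e=\{e\}$. Because $V_2(e)=A(e)\oplus\mathrm{i}A(e)$, the condition $\dim_{\mathbb{R}}A(e)=1$ is the same as $V_2(e)=\mathbb{C}e$, which, by the characterization of minimal tripotents recalled earlier, holds if and only if $e$ is minimal. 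Since $e=e_1+\cdots+e_p$ is a sum of mutually orthogonal minimal tripotents, minimality of $e$ forces $p=1$, and then $\lambda_1=1$ because $\max_i\lambda_i=1$. This establishes that $\xi$ is a singleton if and only if its tripotent $e$ is minimal.

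It then remains to simplify \eqref{xi} in the minimal case. With $p=1$ and $\lambda_1=1$, the double sum collapses to the single term with coefficient $1$ and projection $P_{11}=P_2(e)$, so that $\xi(z)=\frac12\log\|B(z,z)^{-1/2}B(z,e)P_2(e)\|$. Writing $T:=B(z,z)^{-1/2}B(z,e)$, I would show $\|T P_2(e)\|=\|Te\|$, which yields exactly the asserted formula $\xi(z)=\frac12\log\|B(z,z)^{-1/2}B(z,e)e\|$. The lower bound $\|TP_2(e)\|\ge\|Te\|$ is immediate from $P_2(e)e=e$ and $\|e\|=1$. For the upper bound, I use that $P_2(e)$ is a contractive Peirce projection onto $V_2(e)=\mathbb{C}e$: for any $x$ with $\|x\|\le 1$ one may write $P_2(e)x=\mu e$ with $|\mu|=\|P_2(e)x\|\le 1$, whence $\|TP_2(e)x\|=|\mu|\,\|Te\|\le\|Te\|$.

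The one point requiring care is the norm identity $\|TP_2(e)\|=\|Te\|$, but even this is routine once one invokes contractivity of the Peirce projection together with the one-dimensionality of $V_2(e)$; the remainder of the argument is a direct translation through Theorem~\ref{Hilbert}, so I do not anticipate a genuine obstacle.
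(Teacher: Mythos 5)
Your proposal is correct and follows essentially the same route as the paper: it reads the singleton condition off Theorem~\ref{Hilbert} (singleton $\Leftrightarrow$ $\Sigma_e$ a point $\Leftrightarrow$ $V_2(e)=\mathbb{C}e$ $\Leftrightarrow$ $e$ minimal, forcing $p=1$ and $\lambda_1=1$), and then reduces $\|B(z,z)^{-1/2}B(z,e)P_2(e)\|$ to $\|B(z,z)^{-1/2}B(z,e)e\|$ using contractivity of $P_2(e)$ and one-dimensionality of $V_2(e)$. You merely make explicit the cardinality argument that the paper leaves implicit in the phrase ``From Theorems \ref{thm:busemann_points_of_bsd} and \ref{Hilbert} we see\dots''; the norm identity is handled identically.
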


\begin{proof}
From Theorems~\ref{thm:busemann_points_of_bsd}
and~\ref{Hilbert},
we see that the singletons are exactly the functions of the form
\begin{equation*}
\xi(z) = \frac {1} {2} \log \big\| B(z, z)^{-1/2} B(z, e) P_{2}(e) \big\|,
\qquad\text{for all $z\in D$},
\end{equation*}
where $e$ is a minimal tripotent.
Here we are using that $P_{11}(e) = P_2(e)$.
Recall that $P_2(e)$ is a contractive mapping,
that is, $\|P_2(e) x\| \le \|x\|$ for all $x\in V$.
Hence the supremum of
\begin{equation*}
\frac {\|B(z, z)^{-1/2} B(z, e) P_2(e) x\|} {\|x\|} \\
\end{equation*}
is attained in $V_2(e) \backslash\{0\}$.
But each $x\in V_2(e) \backslash\{0\}$ can be written $x = \lambda e$ for some
$\lambda \in \C \backslash\{0\}$, and so the supremum is equal to
$\| B(z, z)^{-1/2} B(z, e) e \|$.
\end{proof}

So, there is a one-to-one correspondence between the minimal tripotents
and the singleton Busemann points. This correspondence will play a key role
in what follows.

\section{Parts of the boundary, and tripotents}

In this section, we prove Theorem~\ref{thm:rank_inequality}. 
We also show that, if the ranks are equal, then a distance-preserving map 
takes singleton Busemann points to singleton Busemann points.
Throughout the section, $D$ is a bounded symmetric domain represented
as the unit ball of a finite-dimensional JB*-triple.

The following lemma is stated
in~\cite[Result 2.4]{bharali_janardhanan_proper_holomorphic_maps_between_bounded_symmetric_domains_revisited},
although no proof or reference is given.

\begin{lemma}
\label{lem:orthogonal_to_parts}
Let $e_1$ and $e_2$ be orthogonal tripotents of $D$,
and let $u\in D$ be orthogonal to $e = e_1 + e_2$.
Then, $u$ is orthogonal to both $e_1$ and $e_2$.
\end{lemma}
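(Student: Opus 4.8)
The plan is to work with the joint Peirce decomposition of $V$ with respect to the orthogonal tripotents $e_1$ and $e_2$. First I would translate the hypothesis into Peirce-space language: by one of the equivalent formulations of orthogonality recalled in the excerpt, $u$ being orthogonal to $e$ is the same as $\{e,e,u\}=0$, that is $e\smallsquare e(u)=0$, so $u$ lies in the Peirce $0$-space $V_0(e)$.

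Next I would rewrite $e\smallsquare e$ in terms of $e_1$ and $e_2$. Expanding by linearity in the outer variables and conjugate-linearity in the middle,
\[
e\smallsquare e = (e_1+e_2)\smallsquare(e_1+e_2) = e_1\smallsquare e_1 + e_1\smallsquare e_2 + e_2\smallsquare e_1 + e_2\smallsquare e_2.
\]
Since $e_1$ and $e_2$ are orthogonal, the cross terms $e_1\smallsquare e_2$ and $e_2\smallsquare e_1$ vanish, leaving $e\smallsquare e = e_1\smallsquare e_1 + e_2\smallsquare e_2$.

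The key step is then to locate $V_0(e)$ inside the joint Peirce decomposition $V=\bigoplus_{0\le i\le j\le 2} V_{ij}(e_1,e_2)$. On each summand $V_{ij}$, the operator $e_k\smallsquare e_k$ acts as the scalar $\frac{1}{2}(\delta_{ik}+\delta_{jk})$, so $e\smallsquare e=e_1\smallsquare e_1+e_2\smallsquare e_2$ acts as the scalar $\lambda_{ij}:=\frac12(\delta_{i1}+\delta_{j1}+\delta_{i2}+\delta_{j2})$; a short inspection of the six pairs $(i,j)$ shows $\lambda_{ij}=0$ precisely on $V_{00}$ and $\lambda_{ij}>0$ on every other summand. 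Writing $u=\sum_{i\le j}u_{ij}$ in this decomposition, the image $e\smallsquare e(u)=\sum_{i\le j}\lambda_{ij}u_{ij}$ is again decomposed along the $V_{ij}$, so $e\smallsquare e(u)=0$ forces $\lambda_{ij}u_{ij}=0$, and hence $u_{ij}=0$, for every $(i,j)\ne(0,0)$; thus $u\in V_{00}(e_1,e_2)$.

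Finally I would read off the conclusion: on $V_{00}$ both $e_1\smallsquare e_1$ and $e_2\smallsquare e_2$ act as zero, so $\{e_1,e_1,u\}=0$ and $\{e_2,e_2,u\}=0$, which is exactly orthogonality of $u$ to $e_1$ and to $e_2$. There is no serious obstacle here: the argument amounts to the eigenvalue bookkeeping that identifies $V_0(e)$ with the single joint Peirce space $V_{00}(e_1,e_2)$, and the one point that must be handled carefully is the vanishing of the cross box operators $e_1\smallsquare e_2$ and $e_2\smallsquare e_1$, which is precisely where the orthogonality of $e_1$ and $e_2$ is used.
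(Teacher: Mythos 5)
Your proof is correct, but it takes a different route from the paper's. The paper stays with the single Peirce decomposition relative to $e$: it computes $\{e,e,e_1\}=e_1$ (the cross terms vanishing by orthogonality of $e_1$ and $e_2$), concludes $e_1\in V_2(e)$, and then invokes the rule $\{V_0(e),V_2(e),V\}=\{0\}$ together with $u\in V_0(e)$ to get $\{u,e_1,e_1\}=0$, hence orthogonality. You instead pass to the joint Peirce decomposition of the pair $(e_1,e_2)$, split $e\smallsquare e$ as $e_1\smallsquare e_1+e_2\smallsquare e_2$ (again using orthogonality to kill the cross box operators), and identify $V_0(e)$ with $V_{00}(e_1,e_2)$ by checking that the scalar $\tfrac12(\delta_{i1}+\delta_{j1}+\delta_{i2}+\delta_{j2})$ vanishes only for $(i,j)=(0,0)$; the conclusion is then immediate since $e_1\smallsquare e_1$ and $e_2\smallsquare e_2$ both act as zero there. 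The paper's argument is shorter and needs only the single-tripotent Peirce calculus plus one triple-product computation; yours proves the slightly stronger structural fact $V_0(e_1+e_2)=V_{00}(e_1,e_2)$, generalises verbatim to any finite orthogonal family of tripotents, and replaces the multiplication rule $\{V_2(e),V_0(e),V\}=\{0\}$ by elementary eigenvalue bookkeeping. Both arguments hinge on the same essential input, namely that $e_1\smallsquare e_2=e_2\smallsquare e_1=0$.
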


\begin{proof}
The orthogonality of $u$ and $e$ is equivalent to $u\in V_0(e)$.
Observe that
\begin{equation*}
\{e, e, e_1\}
    = \{e_1, e, e\}
    = \{e_1, e_1, e_1\} + \{e_1, e_1, e_2\} + \{e_1, e_2, e\}.
\end{equation*}
The first term on the right-hand-side equals $e_1$; the other two terms
are zero because $e_1$ and $e_2$ are orthogonal.
So, we see that $e_1$ is in $V_2(e)$.
Therefore, by the Peirce calculus, $\{u, e_1, e_1\} = 0$, which implies that
$u$ and $e_1$ are orthogonal. That the same holds for $u$ and $e_2$
can be proved similarly.
%$\{e_1, e_1, u\} 
\end{proof}

Recall that a \emph{chain} in a partially ordered set is a subset
that is totally ordered in the sense that every pair $x$, $y$ of its elements
satisfies either $x \le y$ or $y \le x$.
A chain is \emph{maximal} if it is not a subset of any other chain.

\begin{lemma}
\label{lem:intervals_of_a_chain_are_orthogonal_tripotents}
Let $e_1, \dots, e_s$ be the elements of a chain of non-zero tripotents,
written in ascending order, and take $e_0 := 0$.
Then, the differences $e_{i} - e_{i-1};$ $i\in\{1, \dots, s\}$,
form an orthogonal family of tripotents.
\end{lemma}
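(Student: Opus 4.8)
The plan is to prove by induction on $i$ that each consecutive difference $c_i := e_i - e_{i-1}$ is a tripotent, and that the family $c_1, \dots, c_s$ is mutually orthogonal. The key structural input is the definition of the order on tripotents: since $e_1, \dots, e_s$ form a chain written in ascending order, for each $i$ we have $e_{i-1} \le e_i$, which by definition means that $e_i = e_{i-1} + c_i$ where $c_i$ is a tripotent orthogonal to $e_{i-1}$. Thus the statement ``$c_i$ is a tripotent orthogonal to $e_{i-1}$'' is essentially immediate from the definition of the ordering, once we check that $c_i = e_i - e_{i-1}$ really is the complement supplied by that definition. (For $i=1$ we have $c_1 = e_1 - e_0 = e_1$, which is a tripotent orthogonal to $e_0 = 0$ trivially.)

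The substantive part is to upgrade ``$c_i$ orthogonal to $e_{i-1}$'' to ``$c_i$ orthogonal to every earlier $c_j$ for $j < i$''. First I would observe that $e_{i-1} = c_1 + c_2 + \cdots + c_{i-1}$, which follows by telescoping the definition $c_k = e_k - e_{k-1}$. Now $c_i$ is orthogonal to $e_{i-1} = c_1 + \cdots + c_{i-1}$, and by the inductive hypothesis $c_1, \dots, c_{i-1}$ are themselves mutually orthogonal non-zero tripotents. The goal is to conclude that $c_i$ is orthogonal to each individual summand $c_j$. This is exactly the kind of statement supplied by Lemma~\ref{lem:orthogonal_to_parts}: that lemma says an element orthogonal to a sum $e_1 + e_2$ of orthogonal tripotents is orthogonal to each of $e_1$ and $e_2$ separately. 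Iterating it (or applying a straightforward induction based on it) across the decomposition $e_{i-1} = c_1 + \cdots + c_{i-1}$ of $e_{i-1}$ into orthogonal tripotents yields that $c_i$ is orthogonal to each $c_j$, $j < i$.

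I expect the main obstacle to be a technical but minor point: Lemma~\ref{lem:orthogonal_to_parts} is stated for an element $u \in D$ orthogonal to a sum of \emph{two} orthogonal tripotents, whereas here I need to split off orthogonality to each of $i-1$ summands simultaneously. I would handle this by grouping: to show $c_i \perp c_j$, write $e_{i-1} = c_j + (e_{i-1} - c_j)$, note that $c_j$ and $e_{i-1} - c_j = \sum_{k \neq j} c_k$ are orthogonal tripotents (using the inductive orthogonality of the $c_k$ together with the additivity of orthogonality for orthogonal families), and apply the lemma to conclude $c_i \perp c_j$. One must check that $c_i$ indeed lies in the domain $D$, or more precisely that the orthogonality statement of Lemma~\ref{lem:orthogonal_to_parts} applies; since orthogonality $a \smallsquare b = 0$ is purely algebraic and the lemma's proof only uses the Peirce calculus, the condition $u \in D$ is not genuinely needed and the argument goes through for any tripotent $c_i$.

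A cleaner alternative, which avoids repeated invocation of the two-summand lemma, would be to argue directly at the level of Peirce spaces: orthogonality of $c_i$ to $e_{i-1}$ means $c_i \in V_0(e_{i-1})$, and one shows by the joint Peirce calculus associated to the orthogonal family $c_1, \dots, c_{i-1}$ that $V_0(e_{i-1}) = V_0(c_1 + \cdots + c_{i-1})$ is orthogonal to each $c_j$. Either route reduces the claim to the already-established Lemma~\ref{lem:orthogonal_to_parts} and the definition of the tripotent ordering, so the only real work is the bookkeeping of the induction and the telescoping identity $e_{i-1} = \sum_{k<i} c_k$.
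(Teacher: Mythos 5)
Your proof is correct and takes essentially the same route as the paper's: each difference $e_i - e_{i-1}$ is a tripotent orthogonal to $e_{i-1}$ by the definition of the order, and one then iterates Lemma~\ref{lem:orthogonal_to_parts} to split orthogonality to the sum $e_{i-1} = c_1 + \dots + c_{i-1}$ into orthogonality to each summand. Your remark that the hypothesis $u \in D$ in Lemma~\ref{lem:orthogonal_to_parts} is not genuinely needed (a tripotent lies on the boundary, not in $D$) is a fair observation, and indeed the paper itself applies that lemma to tripotents.
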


\begin{proof}
Since $e_1 \le e_2$, we have that $e_2 - e_1$ is a tripotent orthogonal to
$e_1$. Similarly, $e_3 - e_2$ is a tripotent orthogonal to $e_2$,
and hence to both $e_2 - e_1$ and $e_1$,
by Lemma~\ref{lem:orthogonal_to_parts}.
We continue inductively to get the result.
\end{proof}

\begin{lemma}
\label{lem:the_rank_of_a_tripotent_is_its_index_in_every_maximal_chain}
Let $e_1, \dots, e_s$ be the elements of a maximal chain, written in ascending
order. Then, the rank of $e_i$ is $i$.
\end{lemma}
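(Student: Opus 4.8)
The plan is to reduce the statement to the claim that each consecutive difference along the chain is a \emph{minimal} tripotent, and then read off the rank from the definition of rank as the number of orthogonal minimal tripotents in a decomposition.

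First I would set $e_0 := 0$ and put $f_i := e_i - e_{i-1}$ for $i \in \{1, \dots, s\}$. By Lemma~\ref{lem:intervals_of_a_chain_are_orthogonal_tripotents}, the $f_i$ form an orthogonal family of non-zero tripotents, and telescoping gives $e_i = f_1 + \dots + f_i$. Thus, once I know that every $f_j$ is minimal, the decomposition $e_i = f_1 + \dots + f_i$ exhibits $e_i$ as a sum of exactly $i$ orthogonal minimal tripotents, whence $\rank e_i = i$ by definition.

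The heart of the argument is therefore to show that each $f_j$ is minimal, and I would do this by contradiction using maximality of the chain. Suppose some $f_j$ is not minimal, so that $f_j = g + g'$ with $g$ and $g'$ orthogonal non-zero tripotents. Since $e_{j-1} = f_1 + \dots + f_{j-1}$ is a sum of tripotents each orthogonal to $f_j$, it is itself orthogonal to $f_j$; hence Lemma~\ref{lem:orthogonal_to_parts} yields that $e_{j-1}$ is orthogonal to both $g$ and $g'$. Consequently $c := e_{j-1} + g$ is a tripotent satisfying $e_{j-1} \le c$ (because $g$ is orthogonal to $e_{j-1}$) and $c \le e_j$ (because $g'$ is orthogonal to both $g$ and $e_{j-1}$, hence to $c$, and $e_j = c + g'$). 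As $g$ and $g'$ are non-zero and orthogonal non-zero tripotents are linearly independent, $c$ differs from both $e_{j-1}$ and $e_j$, so $c$ lies strictly between two consecutive elements of the chain. Moreover $c$ is comparable to every chain element, since $e_k \le e_{j-1} \le c$ for $k < j$ and $c \le e_j \le e_k$ for $k \ge j$. Adjoining $c$ therefore produces a strictly larger chain, contradicting maximality.

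The only subtlety to watch is the orthogonality bookkeeping: one must confirm that the inserted tripotent $g$ is orthogonal to all of $e_{j-1}$, which is exactly what Lemma~\ref{lem:orthogonal_to_parts} delivers, and that $c$ remains comparable to the \emph{entire} chain rather than merely to its two neighbours. With minimality of each $f_j$ established, the rank count follows immediately.
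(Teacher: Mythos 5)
Your proof is correct and follows essentially the same route as the paper's: both show each consecutive difference $e_j - e_{j-1}$ is minimal by splitting a hypothetical non-minimal difference into two orthogonal tripotents and inserting $e_{j-1}+g$ into the chain to contradict maximality, then count the orthogonal minimal tripotents in the telescoping decomposition. Your version is slightly more explicit than the paper's in checking that the inserted tripotent is comparable to the entire chain, but this is a matter of detail rather than of method.
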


\begin{proof}
Fix $j \in \{1, \dots, i-1\}$. We can write $e_{j+1} = e_j + c$,
where $c$ is a non-zero tripotent orthogonal to $e_j$.
If $c$ were not minimal, then we could decompose it as $c = c_1 + c_2$,
into the sum of two non-zero orthogonal tripotents.
By Lemma~\ref{lem:orthogonal_to_parts},
$e_j$ would then be orthogonal to both $c_1$ and $c_2$,
and it would follow that $e_j \le e_j + c_1 \le e_j + c_1 + c_2 =  e_{j+1}$.
Thus $e_j + c_1$ could be added to the chain, contradicting its maximality.
So, $c$ must be minimal.

We conclude that each $e_i$ can be written as the sum of the minimal tripotents
$e_j - e_{j-1}$; $j\in\{1, \dots, i\}$. Here, we are taking $e_0 := 0$.
These tripotents are orthogonal,
using Lemma~\ref{lem:intervals_of_a_chain_are_orthogonal_tripotents}.
The conclusion follows.
\end{proof}

Every set of orthogonal tripotents is linearly independent.
So, for any finite-dimensional bounded symmetric domain, there is
an upper bound on the number of mutually orthogonal tripotents.
By Lemma~\ref{lem:intervals_of_a_chain_are_orthogonal_tripotents} then,
there is the same bound on the number of elements of a chain.
Therefore every chain is a subset of a maximal chain.

\begin{lemma}
\label{lem:sequence_of_tripotents}
Let $V$ be a JB*-triple of rank $r$. A chain of non-zero tripotents is maximal if and only if it contains exactly
$r$ elements. 
\end{lemma}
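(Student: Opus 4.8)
The plan is to prove Lemma~\ref{lem:sequence_of_tripotents} by combining the structural results already established about chains and maximal chains. The statement has two directions, and both rely on the fact that in a finite-dimensional JB*-triple of rank $r$, the rank equals the maximum number of mutually orthogonal non-zero tripotents.

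First I would prove the forward implication: if a chain of non-zero tripotents is maximal, then it has exactly $r$ elements. Let $e_1, \dots, e_s$ be the elements of such a maximal chain, written in ascending order. By Lemma~\ref{lem:the_rank_of_a_tripotent_is_its_index_in_every_maximal_chain}, the rank of $e_i$ is $i$; in particular, the top element $e_s$ has rank $s$. But $e_s$ is a tripotent in $V$, and the rank of any tripotent is at most the rank of $V$, which is $r$. Conversely, I must rule out $s < r$. If $e_s$ had rank $s < r$, then $e_s$ would not be maximal, so there would exist a non-zero tripotent $c$ orthogonal to $e_s$; then $e_s \le e_s + c$ would extend the chain, contradicting maximality. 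More carefully, $e_s$ being of rank $s$ means $V_2(e_s)$ has rank $s$; if $s<r$ there is room in $V_0(e_s)$ for a further orthogonal tripotent, which can be appended. Hence $s = r$.

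For the reverse implication, suppose a chain of non-zero tripotents $e_1, \dots, e_s$ has exactly $r$ elements; I want to show it is maximal. By Lemma~\ref{lem:intervals_of_a_chain_are_orthogonal_tripotents}, the differences $e_i - e_{i-1}$ (with $e_0 := 0$) form an orthogonal family of $r$ non-zero tripotents. Since every orthogonal family of non-zero tripotents has at most $r$ elements (as the rank of $V$ is $r$), this family is a maximal orthogonal system. If the chain were not maximal, it could be extended by inserting some tripotent, and the reasoning in the proof of Lemma~\ref{lem:the_rank_of_a_tripotent_is_its_index_in_every_maximal_chain} shows this would produce an orthogonal family with more than $r$ elements, a contradiction. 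Thus the chain is maximal.

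I expect the main obstacle to be making precise, in the forward direction, the claim that a tripotent of rank $s < r$ can always be strictly extended upward in the chain. The clean way to handle this is to note that a tripotent $e$ has rank equal to $r$ (is maximal) exactly when $V_0(e)$ contains no non-zero tripotent; if $e$ has rank $s < r$ then, writing $e$ as a sum of $s$ orthogonal minimal tripotents and comparing with a frame of $r$ minimal tripotents, there must be a non-zero tripotent orthogonal to $e$, which allows the extension. This interplay between rank, the Peirce decomposition, and the existence of orthogonal complements is the crux; once it is in hand, both implications follow directly from the preceding lemmas.
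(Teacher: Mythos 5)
Your proposal is correct and follows essentially the same route as the paper: the forward direction uses Lemma~\ref{lem:the_rank_of_a_tripotent_is_its_index_in_every_maximal_chain} to see that the top element of a maximal chain has rank equal to the chain's length and must be a maximal tripotent, and the reverse direction is a counting argument via the orthogonal differences from Lemma~\ref{lem:intervals_of_a_chain_are_orthogonal_tripotents} (the paper phrases this as ``a longer chain would sit inside a maximal chain with more than $r$ elements,'' but it is the same bound). Your extra discussion of why a non-maximal tripotent admits a non-zero orthogonal tripotent addresses a point the paper leaves implicit.
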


\begin{proof}
Let the elements of a maximal chain be $e_1, \dots, e_s$,
written in increasing order.
By Lemma~\ref{lem:the_rank_of_a_tripotent_is_its_index_in_every_maximal_chain},
the rank of $e_s$ is $s$.
But $e_s$ is a maximal tripotent, for otherwise
we could add another tripotent to the end of the sequence.
We conclude that $s=r$.

Now suppose that a chain $e_1, \dots, e_r$ has $r$ elements.
If it were contained in a larger chain, then the larger chain would itself
be contained in a maximal chain having strictly more than $r$ elements,
which is impossible by the first part.
\end{proof}

\begin{proof}[Proof of Theorem~\ref{thm:rank_inequality}]
We have seen in Section~\ref{sec:isometric_embeddings} that
the map $\phi$ induces a map, which we again denote by $\phi$,
from the set of Busemann points of $D$ to those of $D'$
in such a way that the image under $\phi$ of every almost geodesic converging
to a Busemann point $\xi$ of $D$ converges to $\phi (\xi)$.
This map preserves the detour cost in the sense that
$H(\phi( \xi), \phi (\eta)) = H(\xi, \eta)$ for all Busemann points $\xi$ and $\eta$
of $D$.
Therefore, two Busemann points lie in the same part if and only if
their images lie in the same part.
Since there is a one-to-one correspondence between parts of the
horofunction boundary and tripotents, we get a map,
again denoted by $\phi$, from the tripotents of $D$ to those of $D'$.
By Lemma~\ref{lem:relate_order_and_finiteness_of_detour},
the order on the set of tripotents is preserved; in fact,
$e \le c$ for two tripotents if and only if $\phi(e) \le \phi(c)$,
for any two tripotents $e$ and $c$ of $D$.
In particular, the map $\phi$ on the set of tripotents is injective.

Let $e_1, \dots, e_r$ be a maximal chain of tripotents in $D$.
Its image $\phi(e_1), \dots, \phi(e_r)$ is contained in a maximal chain
of tripotents of $D'$. So, by Lemma~\ref{lem:sequence_of_tripotents},
$\rank D = r \le \rank D'$.

To prove the second inequality,
we note that if $e$ is a maximal tripotent in $V$, then $\dim V_2(e) = rp -\dim V = rp-\dim D$; see \cite[p.194]{chu_bounded_symmetric_domains_in_banach_spaces}. As $V_2(e) = A+\mathrm{i}A_+^\circ$, we have that the dimension of the real vector space $A$ is equal to the dimension of the complex vector space $V_2(e)$.  By Theorem \ref{thm:busemann_points_of_bsd},
there exists a horofunction $\xi$ with tripotent $e$. The part $(\mathcal{P}_\xi,\delta)$ is isometric to the Hilbert metric space $(\Sigma_e,d_H)$ by Theorem \ref{Hilbert}.  Consider the horofunction $\phi(\xi)$ with tripotent, say $c$, in $V'$. Then, $\phi$ induces a distance-preserving map from $(\Sigma_e,d_H)$ into $(\Sigma_c,d_H)$. As $\dim \Sigma_e = \dim V_2(e) -1$ and  $ \dim \Sigma_c = \dim V_2(c) -1$, it follows from the invariance of domain theorem that $\dim V_2(e)\leq \dim V_2(c)$.  If $c'$ is a maximal tripotent  in $V'$, then $\dim V_2(c)\leq \dim V_2(c')$, and hence
\[
rp-\dim D
     = rp-\dim V
     = \dim V_2(e)
     \leq \dim V_2(c')
     = r'p' -\dim V'
     = r'p' -\dim D'.
\qedhere
\]

\end{proof}

The next lemma will be crucial to studying the equal rank case.

\begin{lemma}
\label{lem:singletons_to_singletons}
Assume that we have a Carath\'eodory distance-preserving map $\phi \colon D \to D'$
between two finite dimensional bounded symmetric domains,
such that $\rank D = \rank D'$.
Then, the induced map on the set of Busemann
points (see Section~\ref{sec:isometric_embeddings})
takes singletons to singletons.
\end{lemma}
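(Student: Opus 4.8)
The plan is to reduce the statement to a purely order-theoretic fact about the induced map on tripotents. Recall from Proposition~\ref{prop:singletons_of_bsd} that the singleton Busemann points of $D$ are in one-to-one correspondence with the minimal tripotents of $V$, and that a tripotent is minimal precisely when it has rank one. In the proof of Theorem~\ref{thm:rank_inequality} it was shown that $\phi$ induces an injective, order-preserving and order-reflecting map on the set of tripotents: for tripotents $e$ and $c$ of $D$ one has $e \le c$ if and only if $\phi(e) \le \phi(c)$. Under this correspondence, the induced map on Busemann points sends the singleton $\Xi_e$ associated to a minimal tripotent $e$ to the Busemann point whose part has tripotent $\phi(e)$. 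So the lemma reduces to showing that the induced tripotent map sends minimal (rank one) tripotents to minimal tripotents.

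To this end, I would fix a minimal tripotent $e$ of $V$ and extend the chain $\{e\}$ to a maximal chain $e_1 \le \dots \le e_r$ of non-zero tripotents of $D$; such an extension exists because every chain lies in a maximal one, and $r = \rank D$ by Lemma~\ref{lem:sequence_of_tripotents}. Since $e$ has rank one, Lemma~\ref{lem:the_rank_of_a_tripotent_is_its_index_in_every_maximal_chain} forces $e = e_1$, the bottom of the chain.

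Next I would transport this chain across $\phi$. Because the induced map is injective and order-preserving, the elements $\phi(e_1), \dots, \phi(e_r)$ are distinct and form a chain of non-zero tripotents in $D'$. Here the equal-rank hypothesis is essential: since $\rank D' = r$ as well, a chain with $r$ elements is already maximal by Lemma~\ref{lem:sequence_of_tripotents}. Applying Lemma~\ref{lem:the_rank_of_a_tripotent_is_its_index_in_every_maximal_chain} once more, this time in $D'$, the rank of $\phi(e_i)$ equals $i$; in particular $\phi(e) = \phi(e_1)$ has rank one and is therefore minimal. Consequently the part of $\phi(\Xi_e)$ corresponds to a minimal tripotent, so $\phi(\Xi_e)$ is a singleton, as required.

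The argument is short given the machinery already in place, and I do not expect a serious obstacle. The one point that genuinely needs the hypotheses is the maximality of the image chain: without equal rank, a chain of $r$ elements could be properly extended in $D'$, and then $\phi(e_1)$ might have rank greater than one, so that a singleton could be mapped to a non-singleton. The only care required is to confirm that a rank-one tripotent sits at the bottom of every maximal chain containing it, which is immediate from Lemma~\ref{lem:the_rank_of_a_tripotent_is_its_index_in_every_maximal_chain}.
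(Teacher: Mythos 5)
Your proposal is correct and follows essentially the same route as the paper: both arguments extend the minimal tripotent to a maximal chain, use the equal-rank hypothesis together with Lemma~\ref{lem:sequence_of_tripotents} to conclude that the image chain is again maximal, and then read off the rank of the image from Lemma~\ref{lem:the_rank_of_a_tripotent_is_its_index_in_every_maximal_chain}. The only cosmetic difference is that the paper proves the slightly more general statement that the induced map preserves the rank of every tripotent before specialising to rank one, whereas you argue directly for the rank-one case.
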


\begin{proof}
As in the proof of Theorem~\ref{thm:rank_inequality}, the map $\phi$
induces an injective map between the tripotents of $D$ and those of $D'$
that preserves the order.

Let $e$ be a tripotent of $D$ of rank $s$.
Then, $e$ is contained in a maximal chain $e_1, \dots, e_r$, and
by Lemma~\ref{lem:the_rank_of_a_tripotent_is_its_index_in_every_maximal_chain},
$e = e_s$.
By Lemma~\ref{lem:sequence_of_tripotents},
the chain $\phi (e_1), \dots, \phi (e_r)$ is also maximal,
and hence the rank of $\phi (e)$ is $s$.
We have shown that $\phi$ preserves the rank of every tripotent.

The conclusion now follows on observing that the singleton parts are precisely
those corresponding to minimal tripotents, that is, tripotents of rank 1.
\end{proof}

\section{The Gromov product in a bounded symmetric domain}
Recall that singleton Busemann points correspond exactly to
the minimal tripotents, and play a central role in our analysis.
We introduce the following notation: given a minimal tripotent $e$,
we denote by $\Xi_e$ the associated singleton Busemann point.

We have the following expression, involving the Bergman operator
$B(\cdot,\cdot)$ defined in~(\ref{eqn:bergman_definition}),
for the Gromov product of two singleton Busemann points
of a bounded symmetric domain.
We take the origin $0$ to be the base point.

\begin{theorem}
\label{thm:formula_for_the_gromov_product_of_singletons}
Let $u$ and $v$ be minimal tripotents in a JB*-triple $V$.
Then,
\begin{equation*}
\gromprod {\Xi_u} {\Xi_v} {0}
    = \frac {1} {2} \log \frac {4} {\|P_2(u) B(u, v) v\|}.
\end{equation*}
\end{theorem}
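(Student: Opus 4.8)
The plan is to exploit the two descriptions of the Gromov product in Proposition~\ref{prop:alt_formula_for_gromov_prod}. Since $\Xi_u$ and $\Xi_v$ are Busemann points, I may evaluate the product along any almost-geodesics converging to them. Specialising the geodesics of Lemma~\ref{lem:relate_order_and_finiteness_of_detour} to $\lambda=1$, the curves $\gamma_u(t):=(\tanh t)\,u$ and $\gamma_v(s):=(\tanh s)\,v$ converge to $\Xi_u$ and $\Xi_v$ respectively. Using $d(0,\gamma_u(t))=t$, $d(0,\gamma_v(s))=s$, and $d(x,y)=\tanh^{-1}\|g_{-y}(x)\|$, and writing $\rho_{t,s}:=\|g_{-\gamma_v(s)}(\gamma_u(t))\|$, I obtain
\[
\gromprod{\gamma_u(t)}{\gamma_v(s)}{0}
  = t+s-\tanh^{-1}\rho_{t,s}
  = \tfrac12\log\frac{e^{2t+2s}(1-\rho_{t,s})}{1+\rho_{t,s}}.
\]
As $t,s\to\infty$ we have $\rho_{t,s}\to 1$, so $\frac{1-\rho_{t,s}}{1+\rho_{t,s}}\sim\tfrac14(1-\rho_{t,s}^2)$, and the whole statement reduces to evaluating the single limit $L:=\lim_{t,s\to\infty}e^{2t+2s}(1-\rho_{t,s}^2)$; the claimed formula is equivalent to $L=16/\|P_2(u)B(u,v)v\|$.

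To compute $1-\rho_{t,s}^2=1-\|w\|^2$ with $w:=g_{-\gamma_v(s)}(\gamma_u(t))$, I would pass through the spectral structure of the Bergman operator. Writing $w=\sum_i\tau_i w_i$ in spectral form and using the joint Peirce decomposition of the $w_i$, the operator $B(w,w)$ acts on $V_{ij}$ as the scalar $(1-\tau_i^2)(1-\tau_j^2)$ (with $\tau_0:=0$), so its smallest eigenvalue is $(1-\|w\|^2)^2$ and hence $1-\|w\|^2=1/\|B(w,w)^{-1/2}\|$. This converts the scalar asymptotics into an operator computation whose essential tool is the transformation (``cocycle'') formula for the Bergman operator under M\"obius maps, which gives $B(g_{-y}(x),g_{-y}(x))=B(y,y)^{1/2}B(x,y)^{-1}B(x,x)B(y,x)^{-1}B(y,y)^{1/2}$; see~\cite{loos_bounded_symmetric_domains_and_jordan_pairs}. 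Substituting $x=(\tanh t)u$, $y=(\tanh s)v$ and using $e^{2t}(1-\tanh^2 t)\to 4$, the degeneracy of $B(x,x)$ on $\C u$ and of $B(y,y)$ on $\C v$ produces the factor $16$, while the surviving contribution is controlled by the cross term $B(x,y)\to B(u,v)$. I have verified this whole mechanism in the rank-one case $V=\C$: there $1-\|w\|^2=(1-\rho^2)(1-\sigma^2)/|1-\rho\sigma\,u\bar v|^2$, giving $L=16/|1-u\bar v|^2=16/\|B(u,v)v\|$, exactly as predicted.

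The final step is to identify the surviving contribution with $\|P_2(u)B(u,v)v\|$. Because $u$ is \emph{minimal}, $V_2(u)=\C u$, so $P_2(u)B(u,v)v$ is a scalar multiple of $u$ and its norm is a single number; I would extract precisely this number from the leading term of the operator expansion through the Peirce calculus, checking that the projection $P_2(u)$ appears because only the block of $B(w,w)^{-1/2}$ on which $B(x,x)$ degenerates (the $V_{11}$-block carrying $u$) contributes to $\|B(w,w)^{-1/2}\|$ in the limit. I expect the main obstacle to be exactly this matching in the general, noncommutative setting: unlike the rank-one model, the positive operators $B(x,x)$, $B(y,y)$, $B(x,y)^{-1}$ do not commute and $B(x,x)$, $B(y,y)$ become singular as $x\to u$, $y\to v$, so the asymptotic analysis of $\lambda_{\min}$ of their product must be organised so that these singularities cancel against the normalising factors $e^{2t}$, $e^{2s}$ and isolate $P_2(u)B(u,v)v$ together with the correct constant.
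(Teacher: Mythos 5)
Your reduction of the statement to the limit $L:=\lim_{t,s\to\infty}e^{2t+2s}(1-\rho_{t,s}^2)=16/\|P_2(u)B(u,v)v\|$ is correct, as is the identity $1-\|w\|^2=1/\|B(w,w)^{-1/2}\|$, and your rank-one check is consistent with the claimed formula. But the proposal has a genuine gap, and you name it yourself: the joint asymptotic analysis of the smallest spectral value of $B(y,y)^{1/2}B(x,y)^{-1}B(x,x)B(y,x)^{-1}B(y,y)^{1/2}$ as $x\to u$ and $y\to v$ simultaneously, with two non-commuting degeneracies, is never carried out. That analysis is the entire content of the theorem --- in particular it is where the projection $P_2(u)$ (rather than, say, some symmetric expression in $u$ and $v$) must emerge --- and "I would extract precisely this number from the leading term" is a plan, not an argument. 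As written, nothing beyond the rank-one case is proved.

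The paper's proof shows how to avoid this two-variable problem entirely. Since the double limit $\lim_{t,s}\gromprod{tu}{sv}{0}$ exists in $[0,\infty]$ by Proposition~\ref{prop:alt_formula_for_gromov_prod}, it equals the iterated limit $\lim_{t\to1}\lim_{s\to1}$. The inner limit is, by the definition of a horofunction, simply $d(0,tu)-\Xi_v(tu)$, and the explicit formula $\Xi_v(z)=\frac12\log\|B(z,z)^{-1/2}B(z,v)v\|$ is already available from Proposition~\ref{prop:singletons_of_bsd}. What remains is a single one-variable limit in $t$, handled by $\lim_{t\to1}(1-t^2)B(tu,tu)^{-1/2}=P_2(u)$ and the continuity of $B(\cdot,v)$; this is where $P_2(u)$ appears, cleanly and with no non-commutativity issues. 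If you want to salvage your route, the decisive missing step is exactly this: let $s$ go to its limit \emph{first}, so that the $v$-degeneracy is absorbed into the already-known horofunction, rather than trying to resolve both degeneracies at once.
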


\begin{proof}
The path $t\colon (0,1) \to D$ defined by $t \mapsto tu$ is a geodesic in $D$
and converges to $\Xi_u$ in the horofunction compactification.
The Carath\'eodory distance between $0$ and $tu$ is $d(0, tu) = \tanh^{-1}t$.
Likewise, the path $s\colon (0,1) \to D$ defined by $s \mapsto sv$ is a geodesic
and converges to $\Xi_v$. 
So, by Proposition~\ref{prop:singletons_of_bsd}, we have the following expression for the iterated limit
\begin{align*}
\lim_{t\to 1}\lim_{s\to 1}\gromprod {tu} {sv} {0}
    &= \lim_{t\to 1} \big(d(0, tu) - \Xi_v(tu)\big) \\
    &= \lim_{t\to 1} \frac {1} {2}
      \log \left(\Big( \frac {1+t} {1-t} \Big)
           \big\| B(tu, tu)^{-1/2} B(tu, v) v \big\|^{-1}\right).
\end{align*}
Using the fact that
\begin{equation*}
\lim_{t\to 1} (1 - t^2) B(tu, tu)^{-1/2} = P_2(u),
\end{equation*}
and that $B(tu, v)$ converges to $B(u, v)$
(see the proof of Theorem~4.4 of \cite{chu_cueto_avellaneda_lemmens}), we find that 
\[
\lim_{t\to 1}\lim_{s\to 1}\,\gromprod {tu} {sv} {0} = \frac {1} {2} \log \frac {4} {\|P_2(u) B(u, v) v\|}. 
\]

As the double limit $\lim_{t,s\to\infty}\, \gromprod {tu} {sv} {0}$ exists in $[0,\infty]$ by Proposition\,\ref{prop:alt_formula_for_gromov_prod},
it is equal to the iterated limit. 
\end{proof}

\begin{lemma}
\label{lem:pierce_one_is_empty}
Let $u$ be a minimal tripotent in a bounded symmetric domain,
and let $b \in V_1(u)$ be such that $\triprod {b} {b} {u} = 0$. Then, $b = 0$.
\end{lemma}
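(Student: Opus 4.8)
The plan is to turn the cubic relation $\triprod{b}{b}{u} = 0$ into a statement about orthogonality of $b$ and $u$, and then read off the conclusion from the eigenvalue structure of the Peirce decomposition of $u$. The only input needed is the characterisation of orthogonality recalled in Section~2: two elements $a, a'$ of a JB*-triple are \emph{orthogonal}, that is $a \smallsquare a' = 0$ (equivalently $a' \smallsquare a = 0$), if and only if $\triprod{a}{a}{a'} = 0$.

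First I would apply this criterion with $a = b$ and $a' = u$: the hypothesis $\triprod{b}{b}{u} = 0$ says precisely that $b$ and $u$ are orthogonal. Since orthogonality is a symmetric relation, $u$ is orthogonal to $b$ as well, and applying the same criterion in the reverse direction (now with $a = u$ and $a' = b$) gives $\triprod{u}{u}{b} = 0$. Next I would use the defining property of the Peirce $1$-space: as $b \in V_1(u)$, we have $u \smallsquare u(b) = \triprod{u}{u}{b} = \tfrac{1}{2} b$. Combining the two identities yields $\tfrac12 b = 0$, and hence $b = 0$.

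There is essentially no obstacle here, as the substantive fact is already packaged in the stated equivalence; the only point requiring a little care is that the criterion is invoked in both directions (passing from $\triprod{b}{b}{u}=0$ to $b \smallsquare u = 0$, and from $u \smallsquare b = 0$ back to $\triprod{u}{u}{b}=0$), which is legitimate precisely because orthogonality is symmetric and each one-sided box condition is equivalent to it. I note that minimality of $u$ plays no role in this argument: all that is used is that $u$ is a tripotent and that $b$ lies in its Peirce $1$-space. Should one prefer an argument that does not cite the orthogonality characterisation, the same conclusion follows from the trace-form inner product on the finite-dimensional triple, with respect to which $a \smallsquare a'$ and $a' \smallsquare a$ are mutually adjoint; using the symmetry of the triple product in its outer variables one computes $\langle \triprod{b}{b}{u}, u \rangle = \langle b, \triprod{u}{u}{b} \rangle = \tfrac12 \|b\|^2$, so that $\triprod{b}{b}{u} = 0$ again forces $b = 0$.
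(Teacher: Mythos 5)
Your proof is correct and is essentially identical to the paper's own argument: both convert $\triprod{b}{b}{u}=0$ into orthogonality of $b$ and $u$ via the stated equivalence, use the symmetry of orthogonality to get $\triprod{u}{u}{b}=0$, and conclude from $b=2\triprod{u}{u}{b}$ since $b\in V_1(u)$. Your observations that minimality of $u$ is not needed and that a trace-form computation gives an alternative route are accurate but incidental.
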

\begin{proof}
The assumption implies that $b$ is orthogonal to $u$.
But the orthogonality relation is symmetrical,
and so $b = 2\triprod {u} {u} {b} = 0$.
\end{proof}

In the next lemma, we obtain a useful formula for the Gromov product
$\gromprod {\Xi_u} {\Xi_v} {0}$ by splitting the minimal tripotent $v$
into its Peirce components with respect to $u$.
Here, we use $\real \mu$ to denote the real part of a complex number $\mu$,
and $\overline \mu$ to denote its conjugate.
Also, recall that if $u$ is a minimal tripotent and $b\in V_1(u)$,
then $\{b,b,u\} \in V_2(u)$ by the Peirce calculus,
and so $\{b,b,u\}=\lambda u$ for some $\lambda\in \C$,
since $u$ is minimal. 

\begin{lemma}
\label{lem:formula_for_gromov_product_in_terms_of_decomposition}
Let $u$ and $v$ be minimal tripotents in a bounded symmetric domain.
We decompose $v = a + b + c$ into its Peirce components
$a := P_2(u) v$, $b := P_1(u) v$, and $c := P_0(u) v$,
and let $\mu$ and $\lambda$ in $\C$ be such that $a = \mu u$
and $\triprod {b} {b} {u} = \lambda u$.
Then,
\begin{equation*}
\gromprod {\Xi_u} {\Xi_v} {0}
    = \frac {1} {2} \log \frac {2} {\big|\real \mu - |\mu|^2 - \lambda \big|}.
\end{equation*}
\end{lemma}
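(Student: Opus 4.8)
The plan is to start from the closed-form expression in Theorem~\ref{thm:formula_for_the_gromov_product_of_singletons}, which gives
\[
\gromprod {\Xi_u} {\Xi_v} {0} = \frac{1}{2} \log \frac{4}{\|P_2(u) B(u, v) v\|},
\]
so that the whole problem reduces to evaluating the single vector $P_2(u) B(u, v) v$. First I would expand the Bergman operator using its definition~\eqref{eqn:bergman_definition}, obtaining
\[
B(u, v) v = v - 2\{u, v, v\} + \{u, \{v, v, v\}, u\} = v - 2\{u, v, v\} + \{u, v, u\},
\]
where the last step uses that $v$ is a tripotent, so $\{v, v, v\} = v$. It then remains to project each of the three terms onto $V_2(u)$ and sum.

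For the projection I would substitute $v = a + b + c$ and read off the $V_2(u)$-components with the Peirce calculus. The first term gives $P_2(u) v = a = \mu u$ directly. For the middle term, I would expand the two copies of $v$ in $\{u, v, v\}$ (the middle one conjugate-linearly, the outer one linearly) and keep only the summands landing in $V_2(u)$; the surviving contributions are $\{u, a, a\}$ and $\{u, b, b\}$, while $\{u, c, c\}$ vanishes by the special rule $\{V_2(u), V_0(u), V\} = \{0\}$ and the mixed terms vanish for index reasons. Here I would invoke two identities: $\{u, a, a\} = |\mu|^2 u$, which follows from $a = \mu u$ and conjugate-linearity in the middle slot; and $\{u, b, b\} = \{b, b, u\} = \lambda u$, the first equality being the symmetry of the triple product in its outer variables. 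Thus $P_2(u)\{u, v, v\} = (|\mu|^2 + \lambda) u$. For the last term, only the $V_2(u)$-part of $v$ contributes to $Q_u(v) = \{u, v, u\}$, giving $\{u, a, u\} = \overline{\mu}\, u$, which already lies in $V_2(u)$.

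Collecting the three pieces yields
\[
P_2(u) B(u, v) v = \big(\mu - 2(|\mu|^2 + \lambda) + \overline{\mu}\big) u = 2\big(\real \mu - |\mu|^2 - \lambda\big) u.
\]
To finish I would note that $\lambda$ is in fact real and non-negative, since $\{b, b, u\} = \lambda u$ exhibits $u$ as an eigenvector of the Hermitian operator $b \smallsquare b$, whose spectrum is non-negative by axiom~(iii); and that $\|u\| = 1$ for any nonzero tripotent. Hence $\|P_2(u) B(u, v) v\| = 2|\real \mu - |\mu|^2 - \lambda|$, and substituting into the formula above gives the claimed expression. The only delicate part is the Peirce bookkeeping in the middle term---in particular correctly discarding the cross-terms and the $\{u, c, c\}$ term, and using the outer-variable symmetry to rewrite $\{u, b, b\}$ as $\{b, b, u\} = \lambda u$; establishing that $\lambda$ is real, so that the absolute value in the statement is meaningful, is the other point that needs care.
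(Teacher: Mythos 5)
Your proposal is correct and follows essentially the same route as the paper: expand $B(u,v)v$ via the definition of the Bergman operator, use the Peirce calculus to compute $P_2(u)B(u,v)v = \big(\mu - 2|\mu|^2 - 2\lambda + \overline{\mu}\big)u$, and substitute into Theorem~\ref{thm:formula_for_the_gromov_product_of_singletons}. The paper's proof is just a terser version of the same computation (it does not bother to record that $\lambda\ge 0$ here, since the modulus in the statement is the complex modulus, but your observation is harmless and is used later in the paper anyway).
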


\begin{proof}
By the Peirce calculus, and the linearity and conjugate-linearity of the
triple product,
\begin{align*}
P_2(u) B(u, v) v
    &= P_2(u) \big(v - 2\{u, v, v\} + \{u, v, u\}\big) \\
    &= a - 2 \{u, a, a\} - 2 \{u, b, b\} + \{u, a, u\} \\
    &= \big( \mu - 2 |\mu|^2 - 2 \lambda + \overline\mu \big) u.
\end{align*}
The result now follows upon applying
Theorem~\ref{thm:formula_for_the_gromov_product_of_singletons}.
\end{proof}

It is easy to calculate the Gromov product when one of the minimal tripotents
is a complex multiple of the other.

\begin{lemma}
\label{lem:calculate_gromov_product_for_complex_multiples}
Let $u$ and $v$ be minimal tripotents in a bounded symmetric domain,
such that $v = \mu u$, for some $\mu \in\C$ with $|\mu| = 1$.
Then,
\begin{equation*}
\gromprod {\Xi_u} {\Xi_v} {0}
    = \frac {1} {2} \log \frac {2} {1 - \real \mu}.
\end{equation*}
\end{lemma}

\begin{proof}
This is a simple calculation using
Lemma~\ref{lem:formula_for_gromov_product_in_terms_of_decomposition}.
\end{proof}

Our strategy will be to relate algebraic properties of minimal tripotents
to the Gromov product of the associated Busemann points.
Since the Gromov product is preserved by distance-preserving maps,
the properties of the tripotents will be as well.

First, we characterise when two minimal tripotents are opposite one another.

\begin{proposition}
\label{prop:characterise_opposite}
Let $u$ and $v$ be minimal tripotents in a JB*-triple. Then, $v = -u$ if and only if $\gromprod {\Xi_u} {\Xi_v} {0} = 0$.
\end{proposition}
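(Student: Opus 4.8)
The plan is to route everything through the explicit formula of Lemma~\ref{lem:formula_for_gromov_product_in_terms_of_decomposition}. Write $v = a + b + c$ for the Peirce decomposition of $v$ relative to $u$, with $a = P_2(u)v = \mu u$ and $\triprod{b}{b}{u} = \lambda u$. The forward direction is immediate: if $v = -u$ then $\mu = -1$ with $|\mu| = 1$, so Lemma~\ref{lem:calculate_gromov_product_for_complex_multiples} gives $\gromprod{\Xi_u}{\Xi_v}{0} = \frac{1}{2}\log\frac{2}{1 - \real\mu} = \frac{1}{2}\log 1 = 0$.

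For the converse, suppose $\gromprod{\Xi_u}{\Xi_v}{0} = 0$. By Lemma~\ref{lem:formula_for_gromov_product_in_terms_of_decomposition} this is equivalent to $|\real\mu - |\mu|^2 - \lambda| = 2$. I would first record two elementary bounds: $|\mu| \le 1$, because $P_2(u)$ is contractive and $\|u\| = 1$; and $\lambda \ge 0$, because the equation $\triprod{b}{b}{u} = (b\smallsquare b)u = \lambda u$ exhibits $u$ as an eigenvector of the operator $b\smallsquare b$, which has non-negative spectrum by axiom (iii). Since $\real\mu - |\mu|^2 - \lambda \le \real\mu \le |\mu| \le 1 < 2$, the only possibility is $\real\mu - |\mu|^2 - \lambda = -2$.

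The crux is the further inequality $|\mu|^2 + \lambda \le 1$. To establish it, I would equip the finite-dimensional $V$ with a Hermitian inner product $\langle\cdot,\cdot\rangle$ for which $(x\smallsquare y)^* = y\smallsquare x$ holds for all $x,y$; then every box operator $x\smallsquare x$ is positive and self-adjoint, and the Peirce spaces of any tripotent, being its eigenspaces, are mutually orthogonal. Applied to $v$, whose box operator $v\smallsquare v$ has spectrum in $\{0, 1/2, 1\}$, this gives $0 \le v\smallsquare v \le \id$, so $\langle (v\smallsquare v)u, u\rangle \le \|u\|^2$. On the other hand, a short Peirce-calculus computation shows that the $V_2(u)$-component of $\triprod{v}{v}{u} = (v\smallsquare v)u$ equals $(|\mu|^2 + \lambda)u$, with all remaining terms lying in $V_1(u)$, which is orthogonal to $u$; hence $\langle (v\smallsquare v)u, u\rangle = (|\mu|^2 + \lambda)\|u\|^2$, and $|\mu|^2 + \lambda \le 1$ follows. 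I expect this inequality, and in particular the choice of an inner product that simultaneously diagonalises both $u\smallsquare u$ and $v\smallsquare v$, to be the main obstacle.

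Combining $\real\mu - |\mu|^2 - \lambda = -2$ with $|\mu|^2 + \lambda \le 1$ then closes the argument: substituting $\lambda = 2 + \real\mu - |\mu|^2$ into $\lambda \le 1 - |\mu|^2$ gives $\real\mu \le -1$, which together with $\real\mu \ge -|\mu| \ge -1$ forces $\mu = -1$ and hence $\lambda = 0$. Finally, $\lambda = 0$ means $\triprod{b}{b}{u} = 0$, so $b = 0$ by Lemma~\ref{lem:pierce_one_is_empty}, while $\mu = -1$ gives $P_2(u)v = -u$; thus $c := P_0(u)v = v + u$ is orthogonal to $u$, and a direct expansion using this orthogonality shows $c$ is a tripotent. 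If $c \ne 0$, then $v = (-u) + c$ would be a sum of two orthogonal nonzero tripotents and so have rank at least two, contradicting the minimality of $v$; hence $c = 0$ and $v = -u$.
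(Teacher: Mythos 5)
Your proof is correct, but the converse direction takes a genuinely different route from the paper's. The paper works directly from Theorem~\ref{thm:formula_for_the_gromov_product_of_singletons}: the hypothesis gives $\|P_2(u)B(u,v)v\|=4$, and since the three $V_2(u)$-projections of $v$, $-2\triprod{u}{v}{v}$, $\triprod{u}{v}{u}$ are complex multiples of $u$ each of modulus at most $1$, equality in the triangle inequality forces $P_2(u)v=-P_2(u)\triprod{u}{v}{v}=P_2(u)\triprod{u}{v}{u}=\mu u$ with $|\mu|=1$; the identity $\mu u=-\triprod{u}{a}{a}-\triprod{u}{b}{b}$ together with non-negativity of the spectra then yields $\mu=-1$ and $\lambda=0$ at once, with no auxiliary inequality needed. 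You instead start from Lemma~\ref{lem:formula_for_gromov_product_in_terms_of_decomposition} and must supply the bound $|\mu|^2+\lambda\le 1$; your trace-form argument for it is sound in the finite-dimensional setting the paper works in (the trace form $\langle x,y\rangle=\mathrm{Trace}(x\smallsquare y)$ is positive definite and satisfies $(x\smallsquare y)^*=y\smallsquare x$, so the Peirce spaces of $u$ are mutually orthogonal and $0\le v\smallsquare v\le\id$), and your Peirce computation of the $V_2(u)$-component of $\triprod{v}{v}{u}$ is correct. That said, the inner product is heavier machinery than necessary: since $P_2(u)\triprod{u}{v}{v}=\triprod{u}{a}{a}+\triprod{u}{b}{b}=(|\mu|^2+\lambda)u$ and $\|P_2(u)\triprod{u}{v}{v}\|\le\|u\smallsquare v\|\,\|v\|\le\|u\|\,\|v\|^2=1$, the inequality $|\mu|^2+\lambda\le 1$ is immediate from contractivity of the Peirce projection and the norm bound on box operators, with no choice of inner product required. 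The endgame (forcing $\mu=-1$, $\lambda=0$, $b=0$, and ruling out a non-zero orthogonal tripotent component $c$ by minimality of $v$) coincides with the paper's.
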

\begin{proof}
When $v = -u$, it is easy to see from
Lemma~\ref{lem:calculate_gromov_product_for_complex_multiples}
that $\gromprod {\Xi_u} {\Xi_v} {0} = 0$.

To prove the converse, assume that the latter equation is true.
So, by Theorem~\ref{thm:formula_for_the_gromov_product_of_singletons},
$\|P_2(u) B(u, v) v\| = 4$.
We have
\begin{equation*}
\label{eqn:three_terms}
P_2(u) B(u, v) v
    = P_2(u) \big(v - 2\{u, v, v\} + \{u, v, u\}\big).
\end{equation*}
Observe that 
the projections to the Peirce space $V_2(u)$ of each of the terms
$v$, $-\triprod {u} {v} {v}$, and $\triprod {u} {v} {u}$ is a complex multiple of $u$.
Moreover, each of these projections has norm at most $1$.
It follows that each of them must have norm exactly $1$,
and in fact must all be equal. Thus, we have
$P_2(u)v = -P_2(u) \triprod {u} {v} {v} = P_2(u) \triprod {u} {v} {u} = \mu u$,
for some $\mu \in \C$ with $|\mu|= 1$.

Write $a := P_2(u) v$, $b := P_1(u) v$, and $c := P_0(u) v$.
By the Peirce calculus,
\begin{equation}
\label{eqn:mu_equation}
\mu u = -P_2(u) \triprod {u} {v} {v}
= -\triprod {u} {a} {a} - \triprod {u} {b} {b}.
\end{equation}
Since $u$ is an eigenvector of both $a \smallsquare a$
and $b \smallsquare b$,
and these operators are Hermitian with non-negative spectrum,
we deduce that $\mu$ is negative, and hence equals $-1$.
So, $a = -u$.
From \eqref{eqn:mu_equation} again,
we get $\triprod {u} {b} {b} = 0$,
and hence by Lemma~\ref{lem:pierce_one_is_empty}, that $b = 0$.

Observe that, since $c$ is in $V_0(u)$, it is orthogonal to $u$.
Using the Peirce calculus again,
\begin{equation*}
a + c = v = \triprod {v} {v} {v}
    = \triprod {a} {a} {a} + \triprod {c} {c} {c}.
\end{equation*}
As $a = -u$, we have $\{a,a,a\} = a$, and so $c=\{c,c,c\}$. Hence, both $a$ and $c$ are tripotents.
Since $v$ is minimal, only one of them can
be non-zero, in the present case necessarily $a$.
We have shown that $v = -u$.
\end{proof}

Next we characterise orthogonality of minimal tripotents.

\begin{proposition}
\label{prop:characterise_orthogonality}
Let $u$ and $v$ be minimal tripotents in a JB*-triple. Then,
$u$ and $v$ are orthogonal if and only if
$\gromprod {\Xi_u} {\Xi_v} {0} = \gromprod {\Xi_u} {\Xi_{-v}} {0} = \infty$.
\end{proposition}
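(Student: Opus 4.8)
The plan is to use Lemma~\ref{lem:formula_for_gromov_product_in_terms_of_decomposition}, which expresses the Gromov product
$\gromprod {\Xi_u} {\Xi_v} {0}$ in terms of the Peirce decomposition
$v = a + b + c$ of $v$ relative to $u$, with $a = \mu u$ and
$\triprod {b} {b} {u} = \lambda u$. The formula gives
\begin{equation*}
\gromprod {\Xi_u} {\Xi_v} {0}
    = \frac {1} {2} \log \frac {2} {\big|\real \mu - |\mu|^2 - \lambda \big|},
\end{equation*}
so this product is $+\infty$ precisely when $\real \mu - |\mu|^2 - \lambda = 0$.
The strategy is to show that $u \smallsquare v = 0$ if and only if
\emph{both} this quantity and its analogue for $-v$ vanish.

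First I would record what orthogonality means at the level of the Peirce data.
Since $u$ is a minimal tripotent, the Peirce space $V_2(u)$ equals $\C u$, and I claim $u \smallsquare v = 0$ holds if and only if $v \in V_0(u)$, i.e.\ $a = 0$ and $b = 0$. Indeed, if $u$ and $v$ are orthogonal then $\triprod {u} {u} {v} = 0$, which forces the Peirce-$2$ and Peirce-$1$ components of $v$ to vanish; conversely if $a = b = 0$ then $v = c \in V_0(u)$ and the orthogonality follows. So the right-hand side of the proposition must be shown equivalent to $a = b = 0$, equivalently to $\mu = 0$ and $\lambda = 0$.

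Next I would exploit that $\lambda$ is real and non-negative. Because $u$ is an eigenvector of the Hermitian operator $b \smallsquare b$ with non-negative spectrum (axiom (iii)), we have $\triprod {b} {b} {u} = \lambda u$ with $\lambda \ge 0$, and by Lemma~\ref{lem:pierce_one_is_empty}, $\lambda = 0$ if and only if $b = 0$. Now I would examine the two finiteness conditions simultaneously. For $v$ we need $\real \mu - |\mu|^2 - \lambda = 0$; for $-v$, whose Peirce-$2$ component is $-a = (-\mu)u$ and whose Peirce-$1$ component $-b$ satisfies $\triprod {-b} {-b} {u} = \lambda u$, we need $-\real \mu - |\mu|^2 - \lambda = 0$. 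Both vanishing forces, by adding, $-2|\mu|^2 - 2\lambda = 0$; since $|\mu|^2 \ge 0$ and $\lambda \ge 0$, this yields $\mu = 0$ and $\lambda = 0$, hence $a = 0$ and $b = 0$, so $u$ and $v$ are orthogonal. Conversely, if $u$ and $v$ are orthogonal then $\mu = \lambda = 0$, and the same data serve for $-v$, so both Gromov products are $+\infty$.

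The main obstacle I anticipate is the bookkeeping for the Peirce decomposition of $-v$ relative to $u$: I must verify that negating $v$ negates $a$ and $b$ but leaves $\lambda$ unchanged (since $\triprod {b} {b} {u}$ is conjugate-linear in the middle slot and linear in the outer slots, replacing $b$ by $-b$ multiplies by $(-1)(-1) = 1$), and confirm that the resulting constant for $-v$ is indeed $-\real\mu - |\mu|^2 - \lambda$. Once this sign analysis is in place, the argument reduces to the elementary observation that a sum of two non-negative quantities, $|\mu|^2 + \lambda$, vanishes exactly when both are zero, which is precisely the algebraic characterisation of orthogonality via Lemma~\ref{lem:pierce_one_is_empty}.
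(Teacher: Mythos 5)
Your proposal is correct and follows essentially the same route as the paper: both directions reduce to the formula of Lemma~\ref{lem:formula_for_gromov_product_in_terms_of_decomposition}, with the converse obtained by combining the two vanishing conditions (you add them, the paper subtracts them first to kill $\real\mu$ --- an immaterial difference) and then invoking non-negativity of $|\mu|^2$ and $\lambda$ together with Lemma~\ref{lem:pierce_one_is_empty}. The only cosmetic divergence is that the paper's forward implication uses $B(u,v)=\id$ for orthogonal tripotents directly in Theorem~\ref{thm:formula_for_the_gromov_product_of_singletons} rather than the decomposition formula, but the content is the same.
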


\begin{proof}
First assume that $u$ and $v$ are orthogonal.
In this case $B(u, v)$ is the identity map, and so we get 
$P_2(u) B(u, v) v = 0$. Using the formula for the Gromov product in
Theorem~\ref{thm:formula_for_the_gromov_product_of_singletons},
we see that $\gromprod {\Xi_u} {\Xi_v} {0} = \infty$.
The same conclusion also holds when $v$ is replaced by $-v$.

Now assume that $u$ and $v$ are minimal tripotents such that
the two equations in the statement of the proposition hold.
We use the same notation as in
Lemma~\ref{lem:formula_for_gromov_product_in_terms_of_decomposition}.
Since $\{u, b, b\}$ is in $V_2(u)$, it is equal
to $\lambda u$ for some $\lambda$ in $\C$.
So, $u$ is an eigenvector of $b \smallsquare b$. Since this operator
is Hermitian and has non-negative spectrum, $\lambda$ must be non-negative.
We also have that $a = \mu u$, for some $\mu\in\C$.
So, by Lemma~\ref{lem:formula_for_gromov_product_in_terms_of_decomposition},
the two equations can be expressed in the form
\begin{equation*}
\real \mu - |\mu|^2 - \lambda = 0
= -\real \mu - |\mu|^2 - \lambda.
\end{equation*}
Taking their difference, we get that $\real \mu$ is zero,
and hence that $|\mu|^2$ and $\lambda$ are also zero, since both
are non-negative.
So, we have that $a=0$ and, using Lemma~\ref{lem:pierce_one_is_empty},
that $b = 0$.
We have shown that $v = c$, and since $c$ is in $V_0(u)$,
it is orthogonal to $u$.
\end{proof}

Finally, we characterise when two minimal tripotents are related by a multiple
$\pm \mathrm{i}$. To do this, the following lemma will be useful.

\begin{lemma}
\label{lem:no_orthogonal_component}
Let $u$ and $v$ be minimal tripotents in a JB*-triple.
Assume there exists a frame with $u$ as one of its elements,
such that $v$ is orthogonal to each element of the frame apart from $u$.
Then, $P_0(u)v = 0$.
\end{lemma}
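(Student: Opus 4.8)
The plan is to pass to the joint Peirce decomposition of $V$ with respect to the whole frame and read off $P_0(u)v$ from the components of $v$ that survive the orthogonality hypothesis. Write the frame as $f_1, \dots, f_r$ with $f_1 = u$, and set $e := f_1 + \cdots + f_r$. Since the frame is a maximal orthogonal system of minimal tripotents, $e$ is a tripotent of rank $r = \rank V$, and the joint Peirce decomposition $V = \bigoplus_{0 \le i \le j \le r} V_{ij}$ is available.

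First I would record how the single-tripotent Peirce spaces of $u = f_1$ sit inside this joint decomposition. From the defining eigenvalue condition, a summand $V_{ij}$ lies in $V_0(f_k)$, $V_1(f_k)$, or $V_2(f_k)$ according as the index $k$ appears zero, one, or two times among $i, j$. In particular, $V_{00} \subseteq V_0(u)$ while $V_{01} \subseteq V_1(u)$ and $V_{11} \subseteq V_2(u)$, so the only summand lying simultaneously in $V_0(u)$ and in $V_0(f_k)$ for every $k \ge 2$ is $V_{00}$.

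Next I would use the hypothesis. Orthogonality of $v$ and $f_k$ is equivalent to $\{f_k, f_k, v\} = 0$, that is, $v \in V_0(f_k)$; so the assumption gives $v \in \bigcap_{k=2}^{r} V_0(f_k)$. By the eigenvalue description, this forces every nonzero joint Peirce component of $v$ to carry only indices in $\{0, 1\}$, whence $v \in V_{00} \oplus V_{01} \oplus V_{11}$. Applying $P_0(u)$ and using the placement recorded above, the summands $V_{01}$ and $V_{11}$ are annihilated, leaving $P_0(u)v = P_{00}v \in V_{00}$. (Note that minimality of $v$ plays no role up to this point.)

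It remains to show $V_{00} = \{0\}$, which is where the maximality of the frame enters and is the only substantive point. Since the cross terms $f_k \smallsquare f_l$ vanish for $k \ne l$ by orthogonality, $e \smallsquare e = \sum_k f_k \smallsquare f_k$ acts on $V_{ij}$ as multiplication by half the number of nonzero indices among $i, j$; hence $V_{00} = V_0(e)$. If this space were nonzero it would be a nonzero finite-dimensional JB*-subtriple, and so would contain a minimal tripotent $w$. As $w \in V_0(e)$, it is orthogonal to $e$, and by the extension of Lemma~\ref{lem:orthogonal_to_parts} to several summands it is orthogonal to each $f_k$. Then $f_1, \dots, f_r, w$ would be a strictly larger orthogonal system of minimal tripotents, contradicting the maximality of the frame. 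Therefore $V_{00} = \{0\}$, and consequently $P_0(u)v = 0$.
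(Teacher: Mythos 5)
Your proof is correct and follows essentially the same route as the paper's: both pass to the joint Peirce decomposition relative to the frame, use orthogonality to kill every component of $v$ carrying an index $k\ge 2$, and reduce the claim to the vanishing of the component in $V_{00}$. The only difference is that you supply an argument for $V_{00}=\{0\}$ from maximality of the frame, a fact the paper's proof simply asserts.
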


\begin{proof}
Let $e_1, \dots, e_r$ be the frame, with $e_1 := u$, and set $M:=\{0,1,\ldots,r\}$.
For each $i \in M$ with $i\neq 0$, we have the following chain of equivalences:
\begin{align*}
\text{$v$ is orthogonal to $e_i$}
    & \Longleftrightarrow P_1(e_i) v = P_2(e_i) v = 0 \\
    & \Longleftrightarrow \sum_{k \in M\setminus\{i\}} P_{ki}(e_1, \dots, e_r) v
          = P_{ii}(e_1, \dots, e_r) v = 0 \\
    & \Longleftrightarrow P_{ki}(e_1, \dots, e_r) v = 0,
\quad\text{for all $k \in M$}.
\end{align*}
Also, since $e_1, \dots, e_r$ is a frame,
$P_{00}(e_1, \dots, e_r) v = 0$.

Now observe that, as $v$ is orthogonal to each $e_i$ with $i\neq 1$, 
\begin{equation*}
P_0(u) v = P_{0}(e_1) v
    = \sum_{\substack{i,j\in M\setminus\{1\} \\ i \le j }}
          P_{ij}(e_1, \dots, e_r) v =0.
\qedhere
\end{equation*}
\end{proof}

\begin{lemma}
\label{lem:characterise_multiplication_by_i}
Let $u$ and $v$ be minimal tripotents in a JB*-triple.
Then, $v = \pm \mathrm{i} u$
if and only if both the following conditions hold:
\begin{itemize}
\item
there exists a frame with $u$ as one of its elements, such that $v$ is orthogonal to each
element of the frame apart from $u$;
\item
$\gromprod {\Xi_u} {\Xi_v} {0} + \gromprod {\Xi_u} {\Xi_{-v}} {0}
    = \log 2$.
\end{itemize}
\end{lemma}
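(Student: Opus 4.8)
The plan is to reduce everything to the Peirce decomposition of $v$ with respect to $u$, working with the scalars $\mu,\lambda$ of Lemma~\ref{lem:formula_for_gromov_product_in_terms_of_decomposition}, where $v = a+b+c$ with $a = P_2(u)v = \mu u$, $b = P_1(u)v$, $c = P_0(u)v$, and $\{b,b,u\} = \lambda u$ with $\lambda \ge 0$. For the forward implication, suppose $v = \pm\mathrm{i}u$. Since $u$ is minimal it extends to a frame $e_1 = u, e_2,\dots,e_r$, and $v\smallsquare e_j = \pm\mathrm{i}(u\smallsquare e_j) = 0$ for $j \ge 2$, which gives the first bullet. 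As $v = \pm\mathrm{i}u \in V_2(u)$, we have $\mu = \pm\mathrm{i}$, $b = P_1(u)v = 0$ and hence $\lambda = 0$; substituting $\real\mu = 0$, $|\mu|^2 = 1$, $\lambda = 0$ into Lemma~\ref{lem:formula_for_gromov_product_in_terms_of_decomposition} applied to both $v$ and $-v$ yields $\gromprod{\Xi_u}{\Xi_v}{0} = \gromprod{\Xi_u}{\Xi_{-v}}{0} = \tfrac12\log 2$, whose sum is $\log 2$; this is the second bullet.

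For the converse, I would first use the frame hypothesis together with Lemma~\ref{lem:no_orthogonal_component} to obtain $c = 0$. Applying Lemma~\ref{lem:formula_for_gromov_product_in_terms_of_decomposition} to $v$ and to $-v$ (noting that $-v$ carries the data $-\mu,\lambda$, since $P_2(u)(-v) = -\mu u$ and $\{-b,-b,u\} = \lambda u$), the second bullet reads
\[
\tfrac12\log\frac{4}{\bigl|\real\mu - |\mu|^2 - \lambda\bigr|\,\bigl|{-\real\mu} - |\mu|^2 - \lambda\bigr|} = \log 2,
\]
which simplifies to $|s^2 - t^2| = 1$, where $s := \real\mu$ and $t := |\mu|^2 + \lambda$.

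The algebraic constraint comes from the tripotent identity $\{v,v,v\} = v$. With $c = 0$, expanding $\{\mu u + b, \mu u + b, \mu u + b\}$ and extracting the $V_2(u)$-component by the Peirce calculus (using $\{u,b,b\} = \{b,b,u\} = \lambda u$, while all mixed terms fall outside $V_2(u)$) gives $\mu(|\mu|^2 + 2\lambda)u = \mu u$, that is, $\mu(|\mu|^2 + 2\lambda - 1) = 0$. If $\mu \ne 0$, then $t = \tfrac12(1 + |\mu|^2)$; since $|\mu| = \|P_2(u)v\| \le \|v\| = 1$ we have $s^2, t^2 \in [0,1]$ and $t > 0$, so $|s^2 - t^2| = 1$ can only hold at $s^2 = 0$, $t^2 = 1$. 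This forces $\real\mu = 0$ and $|\mu| = 1$, hence $\mu = \pm\mathrm{i}$ and then $\lambda = 0$, so $\{b,b,u\} = 0$ and $b = 0$ by Lemma~\ref{lem:pierce_one_is_empty}; therefore $v = \mu u = \pm\mathrm{i}u$.

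The main obstacle is the degenerate case $\mu = 0$, where the purely algebraic relations also admit the spurious value $\lambda = 1$ (which would satisfy $|s^2 - t^2| = 1$). To rule this out I would use that here $v = b$ is itself a minimal tripotent in $V_1(u)$, so $b\smallsquare b$ is Hermitian with spectrum in $\{0,\tfrac12,1\}$ and $\{b,b,u\} = \lambda u$ makes $u$ an eigenvector with eigenvalue $\lambda$. The value $\lambda = 1$ is impossible, since it would give $u \in V_2(b) = \C b \subseteq V_1(u)$, contradicting $u \in V_2(u)$ and $V_1(u)\cap V_2(u) = \{0\}$. Hence $\lambda \in \{0,\tfrac12\}$ and $|s^2 - t^2| = \lambda^2 \le \tfrac14 < 1$, contradicting the second bullet. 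Thus $\mu = 0$ does not occur, and the proof is complete.
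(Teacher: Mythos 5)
Your proof is correct and follows essentially the same route as the paper: orthogonality preserved under unimodular scalars for the forward direction, and for the converse the combination of $c=0$ from Lemma~\ref{lem:no_orthogonal_component}, the tripotent identity $\mu(|\mu|^2+2\lambda-1)=0$, and the Gromov-product constraint $\bigl|(|\mu|^2+\lambda)^2-(\real\mu)^2\bigr|=1$. You are in fact slightly more careful than the paper at one point: the paper passes from $a=(|\mu|^2+2\lambda)a$ to $|\mu|^2+2\lambda=1$, which tacitly assumes $\mu\neq 0$, whereas your separate spectral argument excluding $\mu=0$ (via $\lambda\in\{0,\tfrac12\}$ for a minimal tripotent $b\in V_1(u)$) explicitly closes that degenerate case.
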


\begin{proof}
Assume that $v = \pm \mathrm{i}u$. Take any frame $u_1,\dots,u_r$ with $u = u_1$.
Since orthogonality of two elements is preserved when one of them is
multiplied by a complex scalar, $v$ is orthogonal to each of $u_2, \dots, u_r$.
This establishes the first of the two conditions. The second follows from
Lemma~\ref{lem:calculate_gromov_product_for_complex_multiples}.

Now assume that $u$ and $v$ satisfy the two conditions in the statement
of the lemma.
We decompose $v = a + b + c$,
where $a := P_2(u) v$, $b := P_1(u) v$, and $c := P_0(u) v$.
By Lemma~\ref{lem:no_orthogonal_component},
the first condition implies that $c=0$.

There are complex numbers $\mu$ and $\lambda$ such that
$a = \mu u$ and $\triprod {b} {b} {u} = \lambda u$,
since these are elements of $V_2(u)$ and $u$ is minimal.
Moreover, $\lambda$ must be non-negative since
$b \smallsquare b$ is Hermitian and has non-negative spectrum.
Since $v$ is a tripotent and $c =0$, we have by the Peirce calculus
\begin{equation*}
a =  P_2(u)\{a+b,a+b,a+b\} = \triprod {a} {a} {a} + 2 \triprod {b} {b} {a}
    = |\mu|^2 \mu u + 2\lambda \mu u
    = |\mu|^2 a + 2\lambda a.
\end{equation*}
Thus, $|\mu|^2 + 2\lambda = 1$. This implies that $|\mu|^2 + \lambda \le 1$,
with equality only when $|\mu|=1$ and $\lambda=0$. Moreover, $(\real \mu)^2\leq 1$.

It can be seen from
Lemma~\ref{lem:formula_for_gromov_product_in_terms_of_decomposition} that 
the second condition we have imposed on $u$ and $v$
is equivalent to
\begin{equation*}
\left| \real\mu - |\mu|^2 - \lambda \right|\cdot
\left| -\real\mu - |\mu|^2 - \lambda \right|
    = \left| (|\mu|^2 + \lambda )^2 - ( \real \mu)^2\right| =1.
\end{equation*}
For the expression inside the modulus to take the value $-1$, one would need that
$|\mu|^2 + \lambda = 0$ and $\real\mu=1$, which is clearly impossible.
On the other hand, for the value $1$ to be obtained,
we need that $|\mu|^2 + \lambda = 1$ and $\real\mu=0$.
Combining this with the fact that $|\mu |^2 +2\lambda =1$,
gives that $\lambda =0$ and  $\mu$ is either $\mathrm{i}$ or $-\mathrm{i}$. We conclude that $\{b,b,u\} =0$, which implies that $b=0$ by Lemma~\ref{lem:pierce_one_is_empty}. This completes the proof.
\end{proof}

A flat in a bounded symmetric domain $D$ is a maximal embedded Euclidean space,
when one takes the Bergman distance on $D$.
Every flat has the same dimension, namely the rank $r$ of $D$.
If one takes the Kobayashi/Carath\'eodory distance
instead of the Bergman distance, then each flat is isometric to $\R^r$
with the $\ell_\infty$-norm, $\|\cdot\|_\infty$. 
Moreover, given a frame $e_1,\dots,e_r$ in $D$, the set
\begin{equation*}
F :=
    \big\{ \lambda_1 e_1 + \dots + \lambda_r e_r
        \mid \text{$\lambda_i \in (-1, 1)$, for all $i$} \big\}
\end{equation*}
is a flat. In fact, every flat containing the origin is of this form,
and in this case the isometry is the restriction of the exponential map,
$z\in V\mapsto \tanh(z)$, to the linear span of the flat;
see~\cite[Lemma 4.3 and Corollary 4.8]{loos_bounded_symmetric_domains_and_jordan_pairs}.

For each $a\in D$, the \emph{M\"obius transformation} $g_a \colon D\to D$
given by, 
\begin{equation*}
g_a(x) := a + B(a,a)^{1/2} (\id + x \smallsquare a)^{-1}(x),
\qquad\text{for all $x\in D$},
\end{equation*}
is a biholomorphic map, and hence an isometry of the Kobayashi/Carath\'eodory
distance and of the Bergman distance.
The derivative of $g_a$ at any point $b\in D$ has the following expression
in terms of the Bergman operator:
$g'_a(b) = B(a, a)^{1/2} B(b, -a)^{-1}$;
see~\cite[equation (3.2)]{chu_Jordan_structures_in_geometry_and_analysis}.
We will see that if $a$ and $b$ are points lying in a common flat
that contains $0$,
then $g_a$ and $g_b$ commute, and both maps leave the flat invariant.
\begin{lemma}
\label{lem:bergman_same_flat}
Let $e_1,\dots, e_r$ be a frame, and let $a := a_1 e_1 + \dots + a_r e_r$
and $b := b_1 e_1 + \dots + b_r e_r$ be two elements of the associated flat,
with all coefficients $a_i$ and $b_i$ in $\R$.
Set $a_0 := b_0 := 0$.
Then,
\begin{equation*}
B(a, b)
    = \sum_{0\leq i\leq j\leq r}
           \big( 1 - a_i b_i \big)
           \big( 1 - a_j b_j \big)
           P_{ij}(e_1,\dots, e_r).
\end{equation*}
\end{lemma}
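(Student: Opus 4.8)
The plan is to exploit the joint Peirce decomposition $V = \bigoplus_{0\le i\le j\le r} V_{ij}$ associated with the frame $e_1,\dots,e_r$. Since the $P_{ij}$ are the projections onto these summands, it suffices to prove that $B(a,b)$ acts on each $V_{ij}$ as the scalar $(1-a_ib_i)(1-a_jb_j)$. I would therefore fix indices $i\le j$ and a vector $x\in V_{ij}$, and evaluate the three terms of $B(a,b)(x) = x - 2\{a,b,x\} + \{a,\{b,x,b\},a\}$ separately on $x$, using the defining relation $\{e_k,e_k,x\} = \tfrac12(\delta_{ik}+\delta_{jk})x$ throughout. The convention $a_0=b_0=0$, together with the absence of a tripotent $e_0$, is what makes the boundary cases $i=0$ or $j=0$ fall into line with the general formula.

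The first simplification comes from the mutual orthogonality of the frame: since $e_k\smallsquare e_l = 0$ whenever $k\ne l$, and since all coefficients are real (so conjugate-linearity in the middle slot is harmless), the box operator collapses to $a\smallsquare b = \sum_k a_k b_k\, e_k\smallsquare e_k$. Acting on $x\in V_{ij}$ this gives $\{a,b,x\} = \tfrac12(a_ib_i+a_jb_j)x$, so the linear term contributes $-2\{a,b,x\} = -(a_ib_i+a_jb_j)x$. This already produces the cross terms of $(1-a_ib_i)(1-a_jb_j)$.

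The substantive step is the quadratic term $\{a,\{b,x,b\},a\}$, which equals $Q_aQ_b(x)$. Writing $Q_b = \sum_{k,l} b_kb_l\, Q_{e_k,e_l}$ with $Q_{e_k,e_l}(\,\cdot\,) := \{e_k,\cdot,e_l\}$ (polarization, again using that the $b_l$ are real), I would apply the main triple identity (axiom (ii)) to compute the $e_m\smallsquare e_m$–eigenvalues of $\{e_k,x,e_l\}$ and thereby show that $Q_{e_k,e_l}$ acts nontrivially on $V_{ij}$ only when the multiset $\{k,l\}$ equals $\{i,j\}$, and that it then maps $V_{ij}$ into itself. On an off-diagonal space $V_{ij}$ with $1\le i<j$ this leaves $Q_b|_{V_{ij}} = 2b_ib_j Q_{e_i,e_j}$ and $Q_a|_{V_{ij}} = 2a_ia_j Q_{e_i,e_j}$, so $Q_aQ_b|_{V_{ij}} = 4a_ia_jb_ib_j\,(Q_{e_i,e_j})^2$. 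The crux — and where I expect the main obstacle — is the identity $(Q_{e_i,e_j})^2 = \tfrac14\,\id$ on $V_{ij}$. I would derive it from the rank-two tripotent $c := e_i + e_j$: on $V_2(c)$ the quadratic operator $Q_c$ is the canonical conjugation with $Q_c^2 = P_2(c)$, while $Q_{e_i}$ and $Q_{e_j}$ vanish on $V_{ij}\subseteq V_1(e_i)\cap V_1(e_j)$, so the polarization $Q_c = Q_{e_i}+2Q_{e_i,e_j}+Q_{e_j}$ reduces to $Q_c|_{V_{ij}} = 2Q_{e_i,e_j}$ and hence $(2Q_{e_i,e_j})^2 = \id$ there. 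On a diagonal space $V_{ii}=\C e_i$ (for $i\ge1$) a direct computation with the conjugation $Q_{e_i}$ gives $Q_aQ_b = a_i^2b_i^2\,\id$, and on the spaces carrying an index $0$ the operator $Q_b$ simply vanishes for want of a tripotent $e_0$.

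Assembling the three contributions on $V_{ij}$ then yields $B(a,b)|_{V_{ij}} = \big(1-(a_ib_i+a_jb_j)+a_ia_jb_ib_j\big)\id = (1-a_ib_i)(1-a_jb_j)\id$ in every case, and summing over $0\le i\le j\le r$ against the projections $P_{ij}$ gives the stated formula. The only real delicacy is keeping track of the conjugate-linearity of $Q$ and the factor $\tfrac14$ coming from $(Q_{e_i,e_j})^2$; everything else is a routine application of the Peirce calculus.
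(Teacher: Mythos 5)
Your proposal is correct, and it shares the paper's overall skeleton: both proofs reduce the claim to showing that $B(a,b)$ acts as the scalar $(1-a_ib_i)(1-a_jb_j)$ on each joint Peirce space $V_{ij}$, and both compute the linear term identically, namely $a\smallsquare b = \tfrac12(a_ib_i+a_jb_j)\id$ on $V_{ij}$ via orthogonality of the frame. Where you genuinely diverge is in the quadratic term $Q_aQ_b$. The paper expands $\{e_k,\{e_m,z,e_n\},e_l\}$ with the Jordan identity and sums the resulting Kronecker deltas over all four indices to land directly on $a_ia_jb_ib_j\,z$; you instead polarize $Q_a$ and $Q_b$, use the box-operator rules to kill all terms except $Q_{e_i,e_j}$, and then obtain $(2Q_{e_i,e_j})^2=\id$ on $V_{ij}$ from the identity $Q_c^2=P_2(c)$ for the tripotent $c=e_i+e_j$, using that $V_{ij}\subseteq V_2(c)$ and that $Q_{e_i}$, $Q_{e_j}$ vanish on $V_1(e_i)\cap V_1(e_j)$. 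Your route is more structural and makes the factorization of the eigenvalue transparent, at the cost of invoking the rank-two tripotent $c$; the paper's is a self-contained brute-force computation from axiom (ii). One small imprecision: on $V_{0j}$ with $j\ge 1$, the index-matching argument leaves the term $b_j^2Q_{e_j}$, which vanishes because $V_{0j}\subseteq V_1(e_j)$ and $\{V_2(e_j),V_1(e_j),V_2(e_j)\}=\{0\}$ --- not merely ``for want of a tripotent $e_0$'' --- but this is covered by the same Peirce-calculus observation you already use in the off-diagonal case, so the argument goes through.
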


\begin{proof}
Let $z \in P_{ij}$, with $i, j \in \{0, \dots, r\}$.
We use the convention that $e_0 := 0$.
Using the orthogonality of the $e_k$ and that $z$ is
an eigenvector of each $e_k \smallsquare e_k$, we get
\begin{equation*}
a \smallsquare b(z)
    = \sum_k a_k b_k \{e_k, e_k, z\}
    = \sum_k a_k b_k (\delta_{ik} + \delta_{jk}) z
    = \frac {1} {2} \big(a_i b_i + a_j b_j\big) z.
\end{equation*}
From the Jordan identity, we have, for $k, l, m, n \in \{1, \dots, r\}$,
\begin{align*}
\big\{e_k, \{e_m, z, e_n\}, e_l\big\}
    &= \big\{ \{z, e_m, e_k \}, e_n, e_l\big\} \\
    & \qquad\qquad
     + \big\{ e_k, e_n, \{ z, e_m, e_l \}\big\} 
          -\big\{ z, e_m, \{ e_k, e_n, e_l \}\big\}.
\end{align*}
So,
\begin{align*}
Q_a Q_b(z)
    &= \sum_{m, n, k, l} a_k a_l b_m b_n
          \big\{e_k, \{e_m, z, e_n\}, e_l \big\} \\
    &= \frac {1} {4} \sum_{m, n, k, l} a_k a_l b_m b_n
       \Big[
      \delta_{mk}\delta_{nl}(\delta_{im}+\delta_{jm})(\delta_{in}+\delta_{jn})
      \\
    & \qquad\qquad
    + \delta_{ml}\delta_{nk}(\delta_{im}+\delta_{jm})(\delta_{in}+\delta_{jn})
    - 2 \delta_{kn}\delta_{nl}\delta_{lm}(\delta_{im}+\delta_{jm})
       \Big] z \\
    &= \frac {1} {4}
        \big[ 2(a_i b_i + a_j b_j)^2 - 2(a_i^2 b_i^2 + a_j^2 b_j^2 ) \big] z \\
    &= a_i a_j b_i b_j z.
\end{align*}
Combining this with the result at the start, we have
\begin{equation*}
B(a, b) z
    = \big( 1 - a_i b_i - a_j b_j
          + a_i a_j b_i b_j \big) z
    = \big( 1 - a_i b_i \big) \big( 1 - a_j b_j \big) z,
\end{equation*}
and the conclusion follows.
\end{proof}

\begin{lemma}
\label{lem:add_mobius_maps}
Let $e_1,\dots, e_r$ be a frame,
and let
\begin{equation*}
a := \sum_i \tanh(\alpha_i) e_i,
\qquad
b := \sum_i \tanh(\beta_i) e_i,
\qquad\text{and}\quad
c = \sum_i \tanh(\alpha_i + \beta_i) e_i
\end{equation*}
be elements of the associated flat, with each $\alpha_i$ and $\beta_i$ in $\R$.
Then, $g_a \after g_b = g_c$.
\end{lemma}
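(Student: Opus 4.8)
The plan is to read the desired identity as an equality of two biholomorphic automorphisms of $D$, namely $g_a\after g_b$ and $g_c$, and to reduce it to matching their $1$-jets at the origin. Since each $g_a$ is biholomorphic, so is the composite $g_a\after g_b$, and by the classical uniqueness theorem of H.\ Cartan two automorphisms of a bounded domain that agree at an interior point and have the same derivative there must coincide. Thus there are exactly two things to verify: that $g_a(b)=c$, so that both maps send $0$ to $c$, and that $(g_a\after g_b)'(0)=g_c'(0)$.

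For the value, I would compute the action of $g_a$ on the flat explicitly. Writing $a=\sum_k a_ke_k$ and $x=\sum_k x_ke_k$ with all $a_k,x_k\in\R$, a short Peirce computation gives $\{e_i,e_j,e_k\}=\delta_{ij}\delta_{jk}e_k$ (orthogonality kills the terms with $i\neq j$, and $e_i\smallsquare e_i$ acts on $V_{kk}$ as $\delta_{ik}$). Hence $x\smallsquare a$ is diagonal on $\mathrm{span}_\C\{e_1,\dots,e_r\}$, sending $e_k\mapsto a_kx_ke_k$, so $(\id+x\smallsquare a)^{-1}(x)=\sum_k \frac{x_k}{1+a_kx_k}\,e_k$. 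By Lemma~\ref{lem:bergman_same_flat} (with $b=a$), the operator $B(a,a)^{1/2}$ acts on $V_{kk}$ as multiplication by $1-a_k^2$, and assembling the definition of $g_a$ yields the coordinatewise formula $g_a(x)=\sum_k \frac{a_k+x_k}{1+a_kx_k}\,e_k$. In particular $g_a$ leaves the flat invariant, and taking $x=b$ with $a_k=\tanh\alpha_k$ and $b_k=\tanh\beta_k$, the addition formula for $\tanh$ gives $g_a(b)=\sum_k\tanh(\alpha_k+\beta_k)e_k=c$.

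For the derivatives, the chain rule together with the stated formula $g'_a(b)=B(a,a)^{1/2}B(b,-a)^{-1}$, and $g_b'(0)=B(b,b)^{1/2}$ (since $B(0,\cdot)=\id$), gives $(g_a\after g_b)'(0)=B(a,a)^{1/2}B(b,-a)^{-1}B(b,b)^{1/2}$, whereas $g_c'(0)=B(c,c)^{1/2}$. All four Bergman operators are simultaneously diagonalised by the joint Peirce decomposition through Lemma~\ref{lem:bergman_same_flat}, so the required equality reduces, on each space $V_{ij}$, to a product over the two indices of the single scalar identity $\frac{(1-a_k^2)(1-b_k^2)}{(1+a_kb_k)^2}=1-c_k^2$. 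This in turn follows from $c_k=\tanh(\alpha_k+\beta_k)$ together with $1-\tanh^2=\mathrm{sech}^2$ and the identity $1+\tanh\alpha_k\tanh\beta_k=\frac{\cosh(\alpha_k+\beta_k)}{\cosh\alpha_k\cosh\beta_k}$, which I would record as a one-line hyperbolic computation.

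The only genuine subtlety is the passage from the flat to all of $D$: the flat is not open in $D$, so agreement of $g_a\after g_b$ and $g_c$ on it does not by itself force agreement everywhere, and a naive analytic-continuation argument would only extend the identity to the complex span of the frame intersected with $D$. This is precisely the obstacle that Cartan's uniqueness theorem removes, by letting us work with only the value and the derivative at the single interior point $0$. Everything else is then the two explicit computations above, each of which collapses to scalar identities indexed by the frame thanks to the diagonalisation supplied by Lemma~\ref{lem:bergman_same_flat}.
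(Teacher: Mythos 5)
Your proposal is correct and follows essentially the same route as the paper: verify $g_a(b)=c$ by the explicit coordinatewise action of $g_a$ on the flat, match the derivatives at $0$ via $g_a'(b)=B(a,a)^{1/2}B(b,-a)^{-1}$ and the diagonalisation of the Bergman operators from Lemma~\ref{lem:bergman_same_flat}, and conclude with Cartan's uniqueness theorem. The scalar hyperbolic identity you reduce to is exactly the ``elementary algebra'' step in the paper's proof.
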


\begin{proof}
For each $i$, write $a_i := \tanh \alpha_i$ and $b_i := \tanh \beta_i$.
Let $y = \sum_i y_i e_i$, with each $y_i := b_i/(1 + b_i a_i)$.
Observe that $(\id + b \smallsquare a) y = b$.
We conclude that $(\id + b \smallsquare a)^{-1} b = y$.

For any $z := \sum_i z_i e_i$, with each $z_i \in \R$,
we have
\begin{equation*}
B(a, a) z = \sum_i (1 - a_i^2)^2 z_i e_i.
\end{equation*}
It follows that
\begin{equation*}
B(a, a)^{1/2} z = \sum_i (1 - a_i^2) z_i e_i.
\end{equation*}
Applying this to $y$, we get
\begin{align*}
g_a(b)
    &= a + B(a, a)^{1/2} (\id + b \smallsquare a)^{-1} b \\
    &= \sum_i \Big( a_i + \frac {(1 - a_i^2) b_i} {1 + b_i a_i} \Big) e_i \\
    &= \sum_i \frac {a_i + b_i} {1 + b_i a_i} e_i \\
    &= \sum_i \tanh(\alpha_i + \beta_i) e_i \\
    &= c.
\end{align*}
So, the maps $g_a \after g_b$ and $g_c$ agree at $0$.

Let $z$ be in the joint Peirce space $V_{ij}$, with $i, j \in \{0, \dots, r\}$.
The derivative of $g_c$ at $0$ applied to $z$ is
\[ g'_c(0) z = B(c, c)^{1/2} z 
   = \left(\Big(1 - \Big(\frac {a_i + b_i} {1 + a_i b_i} \Big)^2\Big)
       \Big(1 - \Big(\frac {a_j + b_j} {1 + a_j b_j} \Big)^2\Big)\right)^{1/2} z,
\]
using Lemma~\ref{lem:bergman_same_flat}.
Similarly, the derivative of $g_a \after g_b$ at $0$ applied to $z$ is
\begin{align*}
(g_a \after g_b)'(0) z    &= g'_a(b) g'_b(0) z \\
    &= B(a, a)^{1/2} B(b, -a)^{-1} B(b, b)^{1/2} z \\
    &= \left( \frac {(1 - a_i^2) (1 - a_j^2) (1 - b_i^2) (1 - b_j^2)}
       {(1 + a_i b_i)^{2} (1 + a_j b_j)^{2}} \right)^{1/2} z.
\end{align*}
Elementary algebra shows that these two expressions are equal.
Using that $V$ decomposes as the sum of the Peirce spaces,
we conclude that $g_a \after g_b$ and $g_c$ have the same derivative at $0$.

It now follows by Cartan's uniqueness theorem that $g_a \after g_b = g_c$.
\end{proof}

We have seen that every frame gives rise to a flat.
It also defines a collection of minimal tripotents and their
associated Busemann points. We need to study how these objects are related.
Indeed, we will characterise when a point is in the flat
in terms of values of the Busemann points there.

According to~\cite[Lemma 1.6]
{friedman_russo_structure_of_the_predual_of_a_jbwstar_triple},
if $e$ is a tripotent in a JB*-triple $V$, and $x$ is an element with $\|x\|=1$
and $P_2(e) x = e$, then $P_1(e) x = 0$.
We will also need that if $x$ and $y$ in $V$ are orthogonal, then
\begin{equation}
\label{eqn:norm_is_max_if_vectors_are_orthogonal}
\|x + y\| = \max \big( \|x\|, \|y\| \big);
\end{equation}
see~\cite[Corollary 3.1.21]{chu_Jordan_structures_in_geometry_and_analysis}.

\begin{proposition}
\label{prop:characterise_both_horofunctions_equal_zero}
Let $e$ be a minimal tripotent, and let $x\in D$.
Then, $\Xi_{e}(x) = \Xi_{-e}(x) = 0$
if and only if $x \in V_0(e)$.
\end{proposition}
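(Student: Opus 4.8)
The plan is to work directly from the explicit description of the singleton Busemann points in Proposition~\ref{prop:singletons_of_bsd}, namely $\Xi_e(z) = \tfrac12\log\|B(z,z)^{-1/2}B(z,e)e\|$, combined with the Peirce calculus of $e$. Note first that $-e$ is again a minimal tripotent, with $V_2(-e)=V_2(e)$, and that by conjugate-linearity of the triple product in the middle variable one has $\Xi_{-e}(z)=\tfrac12\log\|B(z,z)^{-1/2}B(z,-e)(-e)\|$. For the \emph{easy direction} ($\Leftarrow$), suppose $x\in V_0(e)$, so $x$ is orthogonal to $e$; then $\{x,x,e\}=0$ (orthogonality), and $\{x,e,x\}=0$ since $\{V_0(e),V_2(e),V_0(e)\}\subseteq V_{-2}(e)=\{0\}$. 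Feeding these into the definition \eqref{eqn:bergman_definition} of the Bergman operator gives $B(x,e)e=e$ and $B(x,x)e=e$, so $B(x,x)^{-1/2}e=e$ and $\Xi_e(x)=\tfrac12\log\|e\|=0$; the same computation with $-e$ gives $\Xi_{-e}(x)=0$.

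For the \emph{hard direction} ($\Rightarrow$) I would decompose $x$ into its Peirce components with respect to $e$, writing $x=\mu e+b+c$ with $\mu e=P_2(e)x$ (using that $e$ is minimal, so $V_2(e)=\mathbb{C}e$), $b=P_1(e)x$, and $c=P_0(e)x$. A direct Peirce-calculus computation, using $\{x,e,e\}=\mu e+\tfrac12 b$ and $\{x,e,x\}=\mu^2 e+\mu b+\{b,e,b\}$, then yields
\[
B(x,e)e=(1-\mu)^2 e-(1-\mu)b+\{b,e,b\},\qquad
B(x,-e)(-e)=-(1+\mu)^2 e-(1+\mu)b-\{b,e,b\},
\]
where the three summands lie respectively in $V_2(e)$, $V_1(e)$, and $V_0(e)$. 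The hypothesis becomes $\|B(x,x)^{-1/2}B(x,e)e\|=\|B(x,x)^{-1/2}B(x,-e)(-e)\|=1$, and the goal is to deduce $\mu=0$ and $b=0$, since then $x=c\in V_0(e)$. As a sanity check of the mechanism, in the pure case $b=c=0$ one computes $B(x,x)^{-1/2}$ acts on $V_2(e)$ by the scalar $(1-|\mu|^2)^{-1}$, giving $\Xi_e(\mu e)=\tfrac12\log\frac{|1-\mu|^2}{1-|\mu|^2}$ and $\Xi_{-e}(\mu e)=\tfrac12\log\frac{|1+\mu|^2}{1-|\mu|^2}$; the two vanishing conditions read $|\mu|^2=\real\mu$ and $|\mu|^2=-\real\mu$, forcing $\mu=0$. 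I also expect the inequality $\Xi_e(z)+\Xi_{-e}(z)\ge 0$ to be useful: since $-e$ is opposite to $e$, Proposition~\ref{prop:characterise_opposite} gives $\gromprod{\Xi_e}{\Xi_{-e}}{0}=0$, and then Proposition~\ref{prop:alt_formula_for_gromov_prod} gives $\inf_z[\Xi_e(z)+\Xi_{-e}(z)]=0$, so a point where both vanish is a global minimiser of the sum.

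The \textbf{main obstacle} is that $B(x,x)^{-1/2}$ does \emph{not} respect the Peirce decomposition of $e$ once $b\neq 0$, so one cannot simply read off the norm from the three orthogonal-looking pieces above. To control this I would lower-bound the norm using the norm-one state $\phi_e$ associated with the minimal tripotent $e$ (the functional with $P_2(e)y=\phi_e(y)\,e$), via $\|B(x,x)^{-1/2}v\|\ge|\phi_e(B(x,x)^{-1/2}v)|$ for $v=B(x,e)e$ and $v=B(x,-e)(-e)$; alternatively one can pass to the JB*-subtriple generated by $e$ and $b$ and carry out the estimate there, as in the guiding model of $2\times 2$ matrices where $B(x,x)^{-1/2}w=(\id-xx^*)^{-1/2}w(\id-x^*x)^{-1/2}$ and the relevant matrix has rank one. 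In either approach I anticipate the two equalities forcing $\real\mu=|\mu|^2$ together with $-\real\mu\ge|\mu|^2$, hence $\mu=0$, and then $\{b,b,e\}=0$, whence $b=0$ by Lemma~\ref{lem:pierce_one_is_empty}. Establishing the sharp norm lower bound in the presence of the mixing term is the step I expect to require the most care.
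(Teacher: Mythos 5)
Your easy direction and your setup for the hard direction coincide with the paper's: the same Peirce decomposition $x=\mu e+b+c$ and the same formulas for $B(x,e)e$ and $B(x,-e)(-e)$ (your expressions agree with the paper's, since $2\triprod{b}{e}{e}=b$). However, the step you flag as the main obstacle --- controlling the factor $B(x,x)^{-1/2}$ --- is precisely where your argument stops, and neither of your two suggested strategies (the state $\phi_e$, or passing to the subtriple generated by $e$ and $b$) is carried out; as written, there is a genuine gap. The paper's resolution is much simpler than what you anticipate and requires no sharp lower bound at all: since $\|B(x,x)^{1/2}\|\le 1$ (see the discussion before Proposition 3.2.13 of \cite{chu_Jordan_structures_in_geometry_and_analysis}), the hypothesis $\|B(x,x)^{-1/2}B(x,e)e\|=1$ immediately yields the \emph{inequality} $\|B(x,e)e\|\le 1$, and likewise $\|B(x,-e)e\|\le 1$. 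One never needs to know how $B(x,x)^{-1/2}$ interacts with the Peirce decomposition.

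From there the paper concludes as follows. The Peirce projections are contractive, and the $V_2(e)$-components of $B(x,e)e$ and $B(x,-e)e$ are $(1-\mu)^2e$ and $(1+\mu)^2e$, so $|1-\mu|\le 1$ and $|1+\mu|\le 1$, forcing $\mu=0$ (note this is weaker than the equalities $\real\mu=\pm|\mu|^2$ you predict; only the two inequalities are available at this stage, but they suffice). With $\mu=0$ the $V_2(e)$-component of $B(x,e)e$ is exactly $e$, so $\|B(x,e)e\|=1$, and then \cite[Lemma 1.6]{friedman_russo_structure_of_the_predual_of_a_jbwstar_triple} (if $\|y\|=1$ and $P_2(e)y=e$ then $P_1(e)y=0$) gives $0=P_1(e)B(x,e)e=-2\triprod{b}{e}{e}=-b$. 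This replaces your intended route through $\triprod{b}{b}{e}=0$ and Lemma~\ref{lem:pierce_one_is_empty}, which you would in any case first need to extract from the norm conditions. Your observation that $\inf_z[\Xi_e(z)+\Xi_{-e}(z)]=0$ via Propositions~\ref{prop:characterise_opposite} and~\ref{prop:alt_formula_for_gromov_prod} is correct but is not used, and does not by itself close the gap.
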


\begin{proof}
First, assume that $x\in D$ is in $V_0(e)$.
Using the Peirce calculus, we get that
\begin{equation*}
B(x, e)e
    = e - 2 \triprod {x} {e} {e} + \triprod {x} {\triprod {e} {e} {e}} {x}
    = e.
\end{equation*}
Similarly, $B(x, x) e = e$, which implies that $B(x, x)^{-1} e = e$.
Since $B(x, x)^{-1}$ is a positive Hermitian operator
(see~\cite[Lemma 1.2.22]{chu_Jordan_structures_in_geometry_and_analysis}),
it has positive spectrum,
and we deduce that $B(x, x)^{-1/2} e = e$.
Therefore,
\begin{equation*}
\Xi_e(x)
    = \frac {1} {2} \log \big\| B(x,x)^{-1/2} B(x, e) e \big\|
    = 0.
\end{equation*}
That $\Xi_{-e}(x)$ is also zero is proved similarly.

Now let $x\in D$ be such that $\Xi_{e}(x) = \Xi_{-e}(x) = 0$.
So, in particular,
\begin{equation*}
\big\| B(x,x)^{-1/2} B(x, e) e \big\| = 1.
\end{equation*}
Recall that $\|B(x, x)^{1/2}\| \le 1$;
see the discussion before Proposition 3.2.13
of~\cite{chu_Jordan_structures_in_geometry_and_analysis}.
We deduce that $\|B(x, e) e\| \le 1$.

Write $x = a + b + c$, with $a \in V_2(e)$, $b\in V_1(e)$, and $c\in V_0(e)$.
Since $e$ is minimal, we have $a = \mu e$, for some $\mu \in \C$.
By the Peirce calculus,
\begin{align*}
B(x, e) e
    &= e - 2 \mu e - 2 \triprod {b} {e} {e}
       + \mu^2 e + 2 \mu \triprod {b} {e} {e} + \triprod {b} {e} {b} \\
    &= (1 - \mu)^2 e
      - 2(1 - \mu) \triprod {b} {e} {e}
      + \triprod {b} {e} {b}.
\end{align*}
So, the projection of $B(x, e) e$ onto the Peirce 2-space $V_2(e)$ of $e$ is
$(1 - \mu)^2 e$. Since this projection does not increase the norm,
we have $|1 - \mu| \le 1$.

Using similar reasoning, we also get that $\|B(x, -e) e\| \le 1$, with
\begin{equation*}
B(x, -e) e
    = (1 + \mu)^2 e
      + 2(1 + \mu) \triprod {b} {e} {e}
      + \triprod {b} {e} {b},
\end{equation*}
and so $|1 + \mu| \le 1$.
We conclude that $\mu$ is zero, and hence so also is $a$.
So,
\begin{equation*}
B(x, e) e
    = e - 2 \triprod {b} {e} {e} + \triprod {b} {e} {b}.
\end{equation*}
The projection of this vector onto $V_2(e)$ is $e$, and hence its norm
is at least $1$. Combining this with what we had before,
its norm is actually equal to $1$. Applying~\cite[Lemma 1.6]
{friedman_russo_structure_of_the_predual_of_a_jbwstar_triple},
we get that $0 = P_1(e) B(x, e) e = -2\triprod {b} {e} {e} = -b$.
We have shown that $x \in V_0(e)$.
\end{proof}

Recall that the inverse hyperbolic tangent function is given by
\begin{equation*}
\tanh^{-1}x = \frac {1} {2} \log \frac {1 + x} {1 - x}.
\end{equation*}

\begin{lemma}
\label{lem:calculate_singleton_in_flat}
Let $e_1,\dots,e_r$ be a frame of a bounded symmetric domain,
and let $x := \lambda_1 e_1 + \dots + \lambda_r e_r$ 
be in the associated flat, with each $\lambda_i$ in $(-1, 1)$.
Then $\Xi_{e_i}(x) = -\tanh^{-1} \lambda_i$, for all $i$.
\end{lemma}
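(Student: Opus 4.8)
The plan is to evaluate the explicit formula for the singleton Busemann point from Proposition~\ref{prop:singletons_of_bsd} at the point $x$, reducing everything to a Bergman operator computation governed by Lemma~\ref{lem:bergman_same_flat}. By Proposition~\ref{prop:singletons_of_bsd},
\[
\Xi_{e_i}(x) = \frac{1}{2}\log\big\|B(x,x)^{-1/2}B(x,e_i)e_i\big\|,
\]
so the whole problem is to evaluate the norm on the right-hand side.

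First I would compute $B(x,x)$ and $B(x,e_i)$ using Lemma~\ref{lem:bergman_same_flat}. Both $x$ and $e_i$ lie in the real linear span of the frame $e_1,\dots,e_r$, the former with coefficients $\lambda_k$ and the latter with coefficients $\delta_{ki}$; although $e_i$ lies on the boundary of the flat rather than inside it, the formula of Lemma~\ref{lem:bergman_same_flat} is purely algebraic in the real coefficients and so applies verbatim with $b=e_i$. Setting $\lambda_0 := 0$, this gives that $B(x,x)$ acts on the joint Peirce space $V_{kl}(e_1,\dots,e_r)$ as multiplication by $(1-\lambda_k^2)(1-\lambda_l^2)$, and that $B(x,e_i)$ acts on $V_{kl}$ as multiplication by $(1-\lambda_i\delta_{ki})(1-\lambda_i\delta_{li})$.

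The key observation is that $e_i$ lies in the joint Peirce space $V_{ii}(e_1,\dots,e_r)$, which follows from the orthogonality of the frame elements, since $\{e_k,e_k,e_i\}=\delta_{ki}e_i=\frac{1}{2}(\delta_{ik}+\delta_{ik})e_i$ for every $k$. Consequently $B(x,e_i)e_i=(1-\lambda_i)^2e_i$. Since $B(x,x)$ is a positive operator acting diagonally on the joint Peirce spaces, its inverse square root acts on $V_{ii}$ as multiplication by $\big[(1-\lambda_i^2)(1-\lambda_i^2)\big]^{-1/2}=(1-\lambda_i^2)^{-1}$. Combining these and using $\|e_i\|=1$, I would obtain
\[
\big\|B(x,x)^{-1/2}B(x,e_i)e_i\big\|
 = \frac{(1-\lambda_i)^2}{1-\lambda_i^2}
 = \frac{1-\lambda_i}{1+\lambda_i}.
\]

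Taking $\tfrac12\log$ and recognising that $\frac{1}{2}\log\frac{1-\lambda_i}{1+\lambda_i}=-\tanh^{-1}\lambda_i$ then yields the claim. I expect no serious obstacle: the computation is essentially bookkeeping, and the only two points requiring genuine care are the justification that Lemma~\ref{lem:bergman_same_flat} may be invoked with the boundary point $e_i$ as one of its arguments, and the verification that $e_i\in V_{ii}$, which is what makes both Bergman operators act on $e_i$ as scalars.
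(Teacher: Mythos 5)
Your proposal is correct and follows essentially the same route as the paper: evaluate the singleton formula from Proposition~\ref{prop:singletons_of_bsd}, observe that $e_i\in V_{ii}$ so that both Bergman operators act on it as scalars, and take the logarithm. The only cosmetic difference is that the paper computes $B(x,e_i)e_i=(1-\lambda_i)^2e_i$ directly from the definition~\eqref{eqn:bergman_definition} using orthogonality of the frame, whereas you invoke Lemma~\ref{lem:bergman_same_flat} with the boundary point $e_i$ as second argument --- which is legitimate, as you note, since that lemma's proof is purely algebraic in the real coefficients.
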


\begin{proof}
Since the $e_k$ are mutually orthogonal,
from the definition of the Bergman operator we have
\begin{equation*}
B(x, e_i) e_i
    = e_i - 2 \lambda_i e_i + \lambda^2_i e_i = (1 - \lambda_i)^2 e_i,
\qquad\text{for all $i$}.
\end{equation*}
The joint Peirce projection $P_{jk}(e_1, \dots, e_r) e_i$
equals $e_i$ when $j = k = i$,
and equals zero otherwise.
So, using Lemma~\ref{lem:bergman_same_flat},
we get $B(x, x)^{-1/2} e_i = (1 - \lambda_i^2)^{-1} e_i$.
Combining these formulae with Proposition~\ref{prop:singletons_of_bsd},
we see that
\begin{equation*}
\Xi_{e_i}(x)
    = \frac {1} {2} \log \frac {(1 - \lambda_i)^2} {1 - \lambda_i^2}
    = \frac {1} {2} \log \frac {1 - \lambda_i} {1 + \lambda_i}
    = -\tanh^{-1} \lambda_i.
\qedhere
\end{equation*}
\end{proof}

\begin{lemma}
\label{lem:translations_acting_on_singletons}
Let $e_1,\dots,e_r$ be a frame of a bounded symmetric domain $D$,
and let $x := \lambda_1 e_1 + \dots + \lambda_r e_r$ 
be in the associated flat, with each $\lambda_i$ in $(-1, 1)$.
Then,
\begin{equation*}
\Xi_{e_i}\big(g_x(y)\big) = \Xi_{e_i}(y) - \tanh^{-1}\lambda_i,
\qquad\text{for all $y\in D$ and all $i$}.
\end{equation*}
\end{lemma}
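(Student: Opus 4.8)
The plan is to exploit that each M\"obius transformation $g_x$ is a \emph{surjective} Carath\'eodory isometry of $D$ with inverse $g_{-x}$, and to track how the singleton horofunction $\Xi_{e_i}$ transforms under it. Recall from Proposition~\ref{prop:singletons_of_bsd} that $\Xi_{e_i}$ is the Busemann point obtained as the limit of the geodesic $t\mapsto te_i$. For any surjective isometry $g$ of a proper geodesic space with basepoint $b$, and any Busemann point $\xi=\lim_n\psi_{z_n}$ coming from an almost-geodesic $(z_n)$, I would first establish the cocycle identity $\xi(g(y))=\eta(y)+\xi(g(b))$, where $\eta$ is the Busemann point that is the limit of the (also almost-geodesic) sequence $(g^{-1}(z_n))$. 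This follows by writing $d(g(y),z_n)=d(y,g^{-1}(z_n))$, inserting $\pm\, d(b,g^{-1}(z_n))$, and using $d(b,g^{-1}(z_n))=d(g(b),z_n)$ so that the second group of terms tends to $\xi(g(b))$ while the first tends to $\eta(y)$.

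Applying this with $g=g_x$, $b=0$, and $z_n=t_ne_i$ (so that $g(b)=g_x(0)=x$) gives $\Xi_{e_i}(g_x(y))=\eta(y)+\Xi_{e_i}(x)$, where $\eta$ is the limit of $t\mapsto g_{-x}(te_i)$. By Lemma~\ref{lem:calculate_singleton_in_flat}, $\Xi_{e_i}(x)=-\tanh^{-1}\lambda_i$, which is precisely the additive constant appearing in the statement. Thus the whole lemma reduces to the single claim that $\eta=\Xi_{e_i}$, i.e. that the image geodesic $t\mapsto g_{-x}(te_i)$ converges to the \emph{same} singleton Busemann point.

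To prove this, I would locate the image geodesic inside the flat of the frame. Writing $te_i=g_{te_i}(0)$ and using Lemma~\ref{lem:add_mobius_maps}, the point $g_{-x}(te_i)=(g_{-x}\after g_{te_i})(0)$ lies in the flat, with frame coordinates equal to the constant $-\tanh^{-1}\lambda_j$ in each direction $j\neq i$ and equal to $\tanh^{-1}t-\tanh^{-1}\lambda_i$ in direction $i$. Since the flat is isometric to $(\R^r,\|\cdot\|_\infty)$ via $\sum_j\tanh(s_j)e_j\mapsto(s_j)_j$, the two rays $t\mapsto te_i$ and $t\mapsto g_{-x}(te_i)$ differ by the \emph{constant} coordinate vector $(-\tanh^{-1}\lambda_j)_j$, and hence stay at constant distance $C=\max_j|\tanh^{-1}\lambda_j|$. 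I would then invoke (and, since it is not in the excerpt, prove by a short triangle-inequality estimate inserted into the detour-cost formula) the general principle that two almost-geodesics remaining at bounded distance converge to Busemann points in the same part; concretely, $H(\Xi_{e_i},\eta)$ and $H(\eta,\Xi_{e_i})$ are both bounded by $2C$ and so finite. Because $\Xi_{e_i}$ is a singleton, its part contains no other point, forcing $\eta=\Xi_{e_i}$ and completing the proof.

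The main obstacle is this last step, $\eta=\Xi_{e_i}$: both the bookkeeping that places $g_{-x}(te_i)$ in the flat with the correct coordinates (via Lemma~\ref{lem:add_mobius_maps}) and the bounded-distance/same-part argument need care, since a priori the boundary homeomorphism induced by $g_{-x}$ need not fix $\Xi_{e_i}$. A secondary technical point is checking that the two limits in the cocycle identity genuinely separate, which holds because $\xi(g(b))$ is an honest limit and $(g^{-1}(z_n))$ is an almost-geodesic converging to the Busemann point $\eta$.
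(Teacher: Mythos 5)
Your proof is correct, and it follows the same overall skeleton as the paper's (split $\Xi_{e_i}(g_x(y))$ into a horofunction term plus the constant $\Xi_{e_i}(x)=-\tanh^{-1}\lambda_i$, then show that the relevant image sequence still converges to $\Xi_{e_i}$), but the key step is handled by a genuinely different mechanism. The paper takes the almost-geodesic $z_n$ along the flat, its image $y_n:=g_x(z_n)$, and interleaves them into a single sequence $w_n$; an explicit $\ell_\infty$ distance computation in the flat shows $w_n$ is itself an almost-geodesic, whence $z_n$ and $y_n$ converge to the same horofunction by the back-and-forth criterion (Proposition~\ref{prop:back_and_forth}). You instead place $g_{-x}(te_i)$ in the flat via Lemma~\ref{lem:add_mobius_maps}, observe that the two rays differ by a constant coordinate vector and so stay at bounded Carath\'eodory distance, bound both detour costs by $2C$ to conclude the limits lie in the same part, and then use that the part of $\Xi_{e_i}$ is a singleton to force equality. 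Both routes are sound; yours avoids the slightly fiddly interleaving estimates but pays for it by invoking an extra structural ingredient (the singleton property of parts over minimal tripotents, i.e.\ Theorem~\ref{Hilbert}/Proposition~\ref{prop:singletons_of_bsd}), which the paper's argument does not need -- the interleaving argument would work verbatim for non-minimal tripotents, where the part is not a singleton. Your supporting claims all check out: the cocycle identity splits into two genuinely convergent limits because $g_{-x}(t_ne_i)$ is the isometric image of a geodesic and hence an almost-geodesic, and the ``bounded distance implies same part'' principle follows from the $1$-Lipschitz property of horofunctions together with $H(\eta,\eta)=0$ along an almost-geodesic for $\eta$.
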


\begin{proof}
The flat $F$ associated with the frame $e_1, \dots, e_r$ is isometric
to the normed space $\R^r$ with the supremum norm $||\cdot||_\infty$.
The isometry is the exponential map at $0$,
namely $\exp_0 \colon \R^r \to F$, given by
\begin{equation*}
\exp_0 \big( (p_1, \dots, p_r) \big) = \tanh(p_1) e_1 + \dots + \tanh(p_r) e_r.
\end{equation*}
For each $i$, let $\mu_i := \tanh^{-1}\lambda_i$,
and define $M := \max_i |\mu_i|$.
The map $\exp_0^{-1}\after g_x\after\exp_0$  is a translation by $(\mu_1, \dots, \mu_r)$.

For simplicity, we consider the case where $i = 1$;
the other cases are similar.
The sequence $z_n : = \exp_0 ( (2n M, 0, \dots, 0))$ is an almost-geodesic
converging to $\Xi_{e_1}$ in the horofunction boundary.
Its image $y_n := g_{x}(z_n)$
under the map $g_{x}$ is also an almost-geodesic.

Let $w_n$ be the sequence obtained by taking alternate terms
of the sequences $z_n$ and $y_n$, that is,
$w_n := z_n$ for $n$ even, and $w_n := y_n$ for $n$ odd.
Let $m$ and $n$ be elements of $\N$ such that $m \le n$.
If $m$ and $n$ are either both even or both odd,
then $d(w_m, w_n) = 2(n - m) M$.
If $m$ is odd and $n$ is even, then $d(w_m, w_n) = 2(n - m) M + \mu_1$,
while
if $m$ is even and $n$ is odd, then $d(w_m, w_n) = 2(n - m) M - \mu_1$.
So, we see that $w_n$ is also an almost-geodesic.

It follows that the three sequences
converge to the same horofunction.
Therefore, using that $g_{x}$ preserves the distance,
\begin{align*}
\Xi_{e_1}\big(g_{x}(y)\big)
    &= \lim_{n\to\infty}
           \Big(d\big(g_{x}(y), y_n\big) - d(0, y_n)\Big) \\
    &= \lim_{n\to\infty} \big(d(y, z_n) - d(0, z_n)\big)
       - \lim_{n\to\infty} \big(d(g_{-x}(0), z_n) - d(0, z_n)\big) \\
    &= \Xi_{e_1}(y) - \Xi_{e_1}(-x).
\end{align*}
The result now follows upon applying
Lemma~\ref{lem:calculate_singleton_in_flat}.
\end{proof}

\begin{proposition}
\label{prop:characterise_sum_equal_zero}
Let $e_1,\dots,e_r$ be a frame of a bounded symmetric domain $D$,
and let $x\in D$. Then $\Xi_{e_i}(x) + \Xi_{-e_i}(x) = 0$, for all $i$,
if and only if $x$ is in the flat defined by $e_1,\dots,e_r$.
\end{proposition}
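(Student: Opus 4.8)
The plan is to treat the two implications separately, the backward one being the substantial part. I would first set up the forward direction as a direct computation using Lemma~\ref{lem:calculate_singleton_in_flat}. Suppose $x = \sum_j \lambda_j e_j$ lies in the flat, with each $\lambda_j \in (-1,1)$. Then Lemma~\ref{lem:calculate_singleton_in_flat} gives $\Xi_{e_i}(x) = -\tanh^{-1}\lambda_i$ immediately. To evaluate $\Xi_{-e_i}(x)$, I would observe that $-e_1, \dots, -e_r$ is also a frame (negation preserves minimality and orthogonality) spanning the same flat, and that the coordinate of $x$ along $-e_i$ in this frame is $-\lambda_i$. Applying Lemma~\ref{lem:calculate_singleton_in_flat} to the negated frame then yields $\Xi_{-e_i}(x) = -\tanh^{-1}(-\lambda_i) = \tanh^{-1}\lambda_i$, so the two values cancel for every $i$.

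For the backward implication, the idea is to use a M\"obius transformation along the flat to translate $x$ to a point where all the relevant singleton horofunctions vanish simultaneously, and then invoke Proposition~\ref{prop:characterise_both_horofunctions_equal_zero}. I would set $\alpha_i := \Xi_{e_i}(x)$, which is finite since $x\in D$, and let $v := \sum_i \tanh(\alpha_i)\, e_i$, a point of the flat. By Lemma~\ref{lem:translations_acting_on_singletons}, $\Xi_{e_i}(g_v(x)) = \Xi_{e_i}(x) - \tanh^{-1}(\tanh\alpha_i) = 0$ for every $i$. Applying the same lemma to the negated frame $-e_1, \dots, -e_r$, in which $v$ has coordinate $-\tanh\alpha_i = \tanh(-\alpha_i)$ along $-e_i$, together with the hypothesis $\Xi_{-e_i}(x) = -\alpha_i$, gives $\Xi_{-e_i}(g_v(x)) = \Xi_{-e_i}(x) + \alpha_i = 0$. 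Thus $g_v(x)$ is a point at which both $\Xi_{e_i}$ and $\Xi_{-e_i}$ vanish for every $i$.

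By Proposition~\ref{prop:characterise_both_horofunctions_equal_zero}, this forces $g_v(x) \in V_0(e_i)$ for all $i$, that is, $g_v(x)$ lies in the joint Peirce space $V_{00}(e_1,\dots,e_r)$. Since $e_1,\dots,e_r$ is a maximal orthogonal system of minimal tripotents, $V_{00}$ can contain no nonzero tripotent, for otherwise it would contain a minimal tripotent orthogonal to the whole frame, contradicting maximality; being a subtriple, it must therefore be $\{0\}$. Hence $g_v(x) = 0$, so $x = g_{-v}(0) = -v = \sum_i (-\tanh\alpha_i)\, e_i$, which lies in the flat, completing the argument.

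The main obstacle I anticipate is the bookkeeping for $\Xi_{-e_i}$ under the translation $g_v$: Lemma~\ref{lem:translations_acting_on_singletons} is phrased for a single fixed frame, so one must pass to the negated frame and carefully re-express $v$ there, keeping track of the sign of each coordinate. A secondary point requiring care is the triviality of $V_{00}$ for a frame, which I would justify through the maximality of the orthogonal system rather than by any direct computation.
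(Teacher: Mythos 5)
Your proposal is correct and follows essentially the same route as the paper: the forward direction via Lemma~\ref{lem:calculate_singleton_in_flat} applied to both the frame and its negation, and the backward direction by translating $x$ along the flat with a M\"obius map so that all the singleton horofunctions $\Xi_{\pm e_i}$ vanish, then invoking Proposition~\ref{prop:characterise_both_horofunctions_equal_zero} and the maximality of the frame to conclude the translated point is $0$. The only cosmetic difference is that you apply the single map $g_v$ where the paper composes the commuting maps $g_{\lambda_j e_j}$ (these agree by Lemma~\ref{lem:add_mobius_maps}), and you spell out why $V_{00}(e_1,\dots,e_r)=\{0\}$, which the paper leaves implicit.
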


\begin{proof}
Let $x := \lambda_1 e_1 + \dots + \lambda_r e_r$
be in the flat, with each $\lambda_i$ in $(-1, 1)$,
and take $j \in\{1, \dots, r\}$.
By Lemma~\ref{lem:calculate_singleton_in_flat}, we have
$\Xi_{e_j}(x) = -\tanh^{-1}\lambda_j$.
But $-e_1,\dots,-e_r$ is also a frame, and it gives rise to the same flat.
With respect to this frame, the coordinates of $x$
are $(-\lambda_1,\dots,-\lambda_r)$.
Using the same lemma again, we get $\Xi_{-e_j}(x) = -\tanh^{-1}(-\lambda_j)$.
We now use that the inverse hyperbolic tangent is an odd function
to get that the sum of $\Xi_{e_j}(x)$ and $\Xi_{-e_j}(x)$ is zero.

To prove the converse, let $x\in D$ and assume that $\Xi_{e_j}(x)+\Xi_{-e_j}(x) =0$  for each $j$.
For each $j$, let $\mu_j := \Xi_{e_j}(x) = -\Xi_{-e_j}(x)$,
and define $\lambda_j := \tanh \mu_j$.
The maps $\{g_{\lambda_j e_j}\}$; $j$ commute, by Lemma  \ref{lem:add_mobius_maps}. 
From Lemma~\ref{lem:translations_acting_on_singletons}
we know that, for $z\in D$ and $j,k\in\{1, \dots, r\}$,
\begin{align*}
\Xi_{e_k} \big( g_{\lambda_j e_j}(z)\big) &= 
\begin{cases}
\Xi_{e_k}(z), \qquad & \text{if $j\neq k$}; \\
\Xi_{e_k}(z) - \mu_k, \qquad & \text{if $j = k$}
\end{cases} \\
\text{and}\qquad
\Xi_{-e_k} \big( g_{\lambda_j e_j}(z)\big) &= 
\begin{cases}
\Xi_{-e_k}(z), \qquad & \text{if $j\neq k$}; \\
\Xi_{-e_k}(z) + \mu_k, \qquad & \text{if $j = k$}.
\end{cases}
\end{align*}
The point
$
y := g_{\lambda_r e_r} \after \cdots \after g_{\lambda_1 e_1} (x)$ satisfies $\Xi_{e_k} (y) = \Xi_{-e_k} (y) = 0$, for all $k$.
From Proposition~\ref{prop:characterise_both_horofunctions_equal_zero},
we get that $y$ is in $V_0(e_k)$, for each $k$. Since the $e_k$ form
a frame, it follows that $y = 0$. But $0$ is in the flat defined by
the frame, and therefore so also is
$x = g_{-\lambda_1 e_1} \after \cdots \after g_{-\lambda_r e_r} (0)$.
\end{proof}

\section{Carath\'eodory distance-preserving maps}

In this section, we use the Gromov product to study Carath\'eodory
distance-preserving maps between bounded symmetric domains of equal rank.
We show that flats are mapped to flats and the Bergman distance is preserved. 

Recall that each Carath\'eodory distance-preserving map $\phi$ 
between bounded symmetric domains of equal rank takes singleton Busemann
points to other such points (see Lemma~\ref{lem:singletons_to_singletons})
and thus induces a map, which we have also denoted by $\phi$,
between their associated minimal tripotents.

\begin{lemma}
\label{lem:preserves_orthogonality}
Let $\phi\colon D \to D'$ be a Carath\'eodory distance-preserving map
between two finite-dimensional bounded symmetric domains of equal rank.
Assume that $\phi(0) = 0$.
Then, two tripotents $u$ and $v$ in $D$ are orthogonal if and only if
$\phi(u)$ and $\phi(v)$ are orthogonal in $D'$.
\end{lemma}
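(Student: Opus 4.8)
The plan is to translate orthogonality of two minimal tripotents into an equivalent condition on the Gromov products of the associated singleton Busemann points, and then exploit the fact that $\phi$ preserves these products. Throughout, $u$ and $v$ denote the minimal tripotents on which the induced map $\phi$ acts. By Proposition~\ref{prop:characterise_orthogonality}, the tripotents $u$ and $v$ are orthogonal if and only if
\[
\gromprod {\Xi_u} {\Xi_v} {0} = \gromprod {\Xi_u} {\Xi_{-v}} {0} = \infty,
\]
and, likewise, $\phi(u)$ and $\phi(v)$ are orthogonal in $D'$ exactly when the analogous pair of Gromov products, taken with respect to the origin of $D'$, are both infinite. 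Since $\phi(0)=0$, the induced map on Busemann points preserves the Gromov product and carries each singleton $\Xi_w$ to $\Xi_{\phi(w)}$; in particular $\gromprod {\Xi_u} {\Xi_v} {0} = \gromprod {\Xi_{\phi(u)}} {\Xi_{\phi(v)}} {0}$.

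The difficulty is the presence of $-v$ (and, after applying $\phi$, of $-\phi(v)$) in the criterion: a priori there is no reason for the combinatorially defined map $\phi$ on minimal tripotents to commute with negation. The hard part will therefore be to establish that $\phi(-v) = -\phi(v)$. To do this I would invoke Proposition~\ref{prop:characterise_opposite}, which states that two minimal tripotents are opposite precisely when the Gromov product of their singletons vanishes. Applying it with both arguments equal to $v$ gives $\gromprod {\Xi_v} {\Xi_{-v}} {0} = 0$. Preservation of the Gromov product then yields $\gromprod {\Xi_{\phi(v)}} {\Xi_{\phi(-v)}} {0} = 0$, and a second application of Proposition~\ref{prop:characterise_opposite}, now in $D'$, forces $\phi(-v) = -\phi(v)$.

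With this identity in hand the proof closes at once. We have
\[
\gromprod {\Xi_u} {\Xi_{-v}} {0}
    = \gromprod {\Xi_{\phi(u)}} {\Xi_{\phi(-v)}} {0}
    = \gromprod {\Xi_{\phi(u)}} {\Xi_{-\phi(v)}} {0},
\]
so both of the Gromov products appearing in the orthogonality criterion transfer correctly under $\phi$. Consequently the pair of infiniteness conditions characterising orthogonality of $u$ and $v$ holds if and only if the corresponding pair for $\phi(u)$ and $\phi(v)$ holds, and Proposition~\ref{prop:characterise_orthogonality} gives the desired equivalence. I expect the only genuine obstacle to be the step $\phi(-v) = -\phi(v)$; once negation is shown to be respected, the remainder is a direct transfer of the Gromov-product criterion across the distance-preserving map.
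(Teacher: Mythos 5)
Your proposal is correct and follows essentially the same route as the paper: the paper's proof likewise first uses Proposition~\ref{prop:characterise_opposite} and preservation of the Gromov product to deduce $\phi(-u)=-\phi(u)$ and $\phi(-v)=-\phi(v)$, and then concludes via the Gromov-product characterisation of orthogonality in Proposition~\ref{prop:characterise_orthogonality}. You have merely spelled out the negation step that the paper states more tersely.
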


\begin{proof}
Proposition~\ref{prop:characterise_opposite} characterises when two minimal
tripotents are opposite one another in terms of the Gromov product
of their associated Busemann points. The latter is preserved by $\phi$,
and so $\phi(-u) = -\phi(u)$ and $\phi(-v) = -\phi(v)$.
The conclusion now follows from the characterisation of orthogonality in Proposition~\ref{prop:characterise_orthogonality},
again in terms of the Gromov product.
\end{proof}

\begin{lemma}
\label{lem:new_flats_mapped_to_flats}
Let $\phi\colon D \to D'$ be a Carath\'eodory distance-preserving map
between two finite-dimensional bounded symmetric domains of equal rank $r$.
Assume that $\phi(0) = 0$. Let $e_1,\dots,e_r$ be a frame in $D$,
and let $x = \lambda_1 e_1 + \dots + \lambda_r e_r$,
with each $\lambda_i \in (-1, 1)$. Then,
\begin{equation}
\label{eqn:coefficients_of_image_point}
\phi(x) = \lambda_1 \phi(e_1) + \dots + \lambda_r \phi(e_r).
\end{equation}
\end{lemma}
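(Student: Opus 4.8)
The plan is to transport the coordinate description of points in a flat, provided by Lemma~\ref{lem:calculate_singleton_in_flat} and Proposition~\ref{prop:characterise_sum_equal_zero}, across the map $\phi$ by exploiting the fact that $\phi$ carries singleton Busemann points to singleton Busemann points. First I would record that the images $\phi(e_1),\dots,\phi(e_r)$ form a frame of $D'$: by Lemma~\ref{lem:singletons_to_singletons} each $\phi(e_i)$ is a minimal tripotent, by Lemma~\ref{lem:preserves_orthogonality} these tripotents are mutually orthogonal, and since there are $r$ of them with $\rank D' = r$ they constitute a maximal orthogonal system of minimal tripotents, that is, a frame. Write $u_i := \phi(e_i)$.

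The crux is the boundary identity
\[
\Xi_{\phi(u)}\big(\phi(y)\big) = \Xi_u(y),
\qquad\text{for all } y \in D \text{ and every minimal tripotent } u \text{ of } D.
\]
To establish it I would take an almost-geodesic $(z_n)$ in $D$ converging to $\Xi_u$; by Corollary~\ref{prop:mapping_of_busemann_points} the image $(\phi(z_n))$ is an almost-geodesic converging to $\Xi_{\phi(u)}$. Evaluating the limiting horofunction at $\phi(y)$, and using that $\phi$ preserves the distance and fixes the basepoint $0$,
\[
\Xi_{\phi(u)}\big(\phi(y)\big)
   = \lim_{n\to\infty}\big(d(\phi(y),\phi(z_n)) - d(0,\phi(z_n))\big)
   = \lim_{n\to\infty}\big(d(y,z_n) - d(0,z_n)\big)
   = \Xi_u(y),
\]
with no additive-constant ambiguity, because the basepoint $0$ is used in both domains and $\phi(0)=0$.

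With the identity in hand the rest is bookkeeping. Applying it with $u = e_i$ and $y = x$, and then with $u = -e_i$ (noting $\phi(-e_i) = -u_i$, which follows from Proposition~\ref{prop:characterise_opposite} since $\phi$ preserves the Gromov product based at $0$), gives
\[
\Xi_{u_i}(\phi(x)) = \Xi_{e_i}(x)
\qquad\text{and}\qquad
\Xi_{-u_i}(\phi(x)) = \Xi_{-e_i}(x).
\]
By Lemma~\ref{lem:calculate_singleton_in_flat} the right-hand sides are $-\tanh^{-1}\lambda_i$ and $+\tanh^{-1}\lambda_i$, so their sum vanishes for every $i$. Proposition~\ref{prop:characterise_sum_equal_zero}, applied in $D'$ to the frame $u_1,\dots,u_r$, then shows that $\phi(x)$ lies in the flat defined by that frame. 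Writing $\phi(x) = \sum_i \nu_i u_i$ with each $\nu_i \in (-1,1)$ and evaluating $\Xi_{u_i}$ via Lemma~\ref{lem:calculate_singleton_in_flat} once more yields $-\tanh^{-1}\nu_i = \Xi_{u_i}(\phi(x)) = -\tanh^{-1}\lambda_i$, whence $\nu_i = \lambda_i$ and $\phi(x) = \sum_i \lambda_i \phi(e_i)$, as claimed.

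The only step requiring genuine care is the boundary identity; everything else is routine application of earlier results. In particular one must verify that the induced boundary map really sends the singleton $\Xi_u$ to the singleton $\Xi_{\phi(u)}$ (the content of Lemma~\ref{lem:singletons_to_singletons} together with Corollary~\ref{prop:mapping_of_busemann_points}), and that evaluation of horofunctions commutes with $\phi$, for which fixing the basepoint is essential.
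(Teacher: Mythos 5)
Your proof is correct and follows essentially the same route as the paper's: evaluate the singleton horofunctions $\Xi_{\pm\phi(e_i)}$ at $\phi(x)$ via the identity $\Xi_{\phi(u)}(\phi(y)) = \Xi_u(y)$, invoke Proposition~\ref{prop:characterise_sum_equal_zero} to place $\phi(x)$ in the flat of the image frame, and read off the coefficients with Lemma~\ref{lem:calculate_singleton_in_flat}. You merely spell out two steps the paper leaves implicit, namely the almost-geodesic justification of the boundary identity and the fact that $\phi(-e_i) = -\phi(e_i)$.
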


\begin{proof}
It follows from Lemma~\ref{lem:preserves_orthogonality} that
the $\phi(e_i)$ form a frame of $D'$.

As $\phi(0)=0$, we have for each $i$ that 
\begin{equation}\label{eq:opposite}
\Xi_{\phi(e_i)} \big(\phi(x) \big) = \Xi_{e_i}(x) = -\tanh^{-1} \lambda_i,
\end{equation}
by Lemma~\ref{lem:calculate_singleton_in_flat},
and a similar equation holds for the opposite tripotents $-e_i$.
In particular,
\begin{equation*}
\Xi_{\phi(e_i)} \big(\phi (x) \big) + \Xi_{-\phi(e_i)} \big( \phi (x) \big) = 0,
\qquad\text{for all $i$}.
\end{equation*}
So, by Proposition~\ref{prop:characterise_sum_equal_zero},
$\phi(x)$ lies in the flat defined by the $\phi(e_i)$'s.
This means that $\phi(x)$ can be expressed as a real linear combination
of the $\phi(e_i)$. Indeed, using \eqref{eq:opposite} and  Lemma~\ref{lem:calculate_singleton_in_flat} again, we see that \eqref{eqn:coefficients_of_image_point} must hold.
\end{proof}

This lemma has the following consequences.

\begin{lemma}
\label{lem:flats_mapped_to_flats}
Let $\phi\colon D \to D'$ be a Carath\'eodory distance-preserving map
between two finite-dimensional bounded symmetric domains of equal rank.
If $F$ is a flat in $D$, then $\phi(F)$ is a flat in $D'$.
\end{lemma}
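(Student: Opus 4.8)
The plan is to reduce the general statement to the special case already settled in Lemma~\ref{lem:new_flats_mapped_to_flats}, which treats only flats through the origin under maps that fix the origin, and then to shed both of those restrictions using the homogeneity of $D$ and $D'$ supplied by the Möbius transformations. Recall that every flat through the origin is associated with a frame $e_1,\dots,e_r$ and consists of the real combinations $\lambda_1 e_1 + \dots + \lambda_r e_r$ with each $\lambda_i \in (-1,1)$; for an origin-fixing distance-preserving map, Lemma~\ref{lem:new_flats_mapped_to_flats} gives $\phi(x)=\sum_i\lambda_i\phi(e_i)$, so as the $\lambda_i$ range over $(-1,1)^r$ the image is exactly the flat associated with the frame $\phi(e_1),\dots,\phi(e_r)$.

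To handle a general flat $F$, first I would fix a point $p\in F$ and set $q:=\phi(p)$. Each Möbius transformation $g_a$ is biholomorphic, hence an isometry of both the Carath\'eodory and the Bergman distances, and therefore carries flats to flats; in particular $\tilde F := g_{-p}(F)$ is a flat, and it passes through the origin because $g_{-p}(p)=0$. Now consider the conjugated map
\[
\psi := g_{-q}\after\phi\after g_{p}\colon D\to D'.
\]
As a composition of distance-preserving maps it is itself distance-preserving, it goes between the same two domains of equal rank, and it fixes the origin, since $\psi(0)=g_{-q}(\phi(p))=g_{-q}(q)=0$. Thus $\psi$ satisfies the hypotheses of Lemma~\ref{lem:new_flats_mapped_to_flats}; writing $\tilde F$ in terms of a frame and applying that lemma, we conclude that $\psi(\tilde F)$ is a flat of $D'$.

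It then remains only to transport this conclusion back. Using $g_{p}=g_{-p}^{-1}$ one has $g_{p}(\tilde F)=F$, so $\psi(\tilde F)=g_{-q}(\phi(F))$, and hence $\phi(F)=g_{q}(\psi(\tilde F))$; since $g_q$ is again a Bergman isometry, it sends the flat $\psi(\tilde F)$ to a flat, which is the desired result. I expect the only point requiring any care to be the bookkeeping of the conjugation --- namely verifying that $\psi$ truly fixes the origin and that the maps $g_{\pm p},g_{\pm q}$ send flats to flats because they are isometries of the Bergman metric --- rather than any genuinely new difficulty: the substantive content has already been isolated in Lemma~\ref{lem:new_flats_mapped_to_flats}, and the present step is essentially an invocation of homogeneity.
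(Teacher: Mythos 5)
Your proposal is correct and follows essentially the same route as the paper: conjugate by Möbius transformations to reduce to an origin-fixing map and a flat through the origin, apply Lemma~\ref{lem:new_flats_mapped_to_flats}, and transport back using that Möbius transformations (being biholomorphic, hence Bergman isometries) carry flats to flats. The paper's proof is just a more compressed version of the same bookkeeping.
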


\begin{proof}
Let $x\in F$.
The map $\psi := g_{-\phi(x)} \after \phi \after g_x$ preserves the
Carath\'eodory distance and satisfies $\psi(0) = 0$.
Moreover, $g_{-x}(F)$ is a flat containing $0$.
By Lemma~\ref{lem:new_flats_mapped_to_flats},
this flat is mapped by $\psi$ to another flat,
namely $g_{-\phi(x)}(\phi(F))$. We deduce that $\phi(F)$ is a flat.
\end{proof}

Recall that, if $e_1, \dots, e_r$ is a frame of $D$
and if $x\in D$ is given by $x = \lambda_1 e_1 + \dots + \lambda_r e_r$,
with each $\lambda_i \in (-1, 1)$,
then the Bergman distance between $0$ and $x$ is
\begin{equation*}
d_B(0, x)
    = \big(
      (\tanh^{-1}\lambda_1)^2 +  \dots + (\tanh^{-1}\lambda_r)^2
      \big)^{1/2}.
\end{equation*}

\begin{lemma}
\label{lem:distance_preserving_for_bergman_metric}
Let $\phi\colon D \to D'$ be a Carath\'eodory distance-preserving map
between two finite-dimensional bounded symmetric domains of equal rank.
Then, $\phi$ is also distance-preserving for the Bergman distance.
\end{lemma}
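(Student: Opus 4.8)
The plan is to reduce the whole statement to computing the Bergman distance from the origin, where Lemma~\ref{lem:new_flats_mapped_to_flats} and the explicit coordinate formula for $d_B$ give complete control, and then to remove the basepoint by conjugating with M\"obius transformations.

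First I would reduce to the case where the map fixes the origin. Fix $x,y\in D$ and set $\psi := g_{-\phi(x)}\after\phi\after g_x$. Since the M\"obius transformations are isometries of both the Carath\'eodory and the Bergman distance, $\psi$ is again Carath\'eodory distance-preserving, and $\psi(0)=g_{-\phi(x)}(\phi(x))=0$. Writing $z:=g_{-x}(y)$, we have $g_x(z)=y$ and hence $\psi(z)=g_{-\phi(x)}(\phi(y))$. Using that $g_x$ and $g_{\phi(x)}$ preserve the Bergman distance, we obtain $d_B(x,y)=d_B(0,z)$ and $d_B'(\phi(x),\phi(y))=d_B'(0,\psi(z))$, where $d_B$ and $d_B'$ denote the Bergman distances on $D$ and $D'$. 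Thus it suffices to prove that $d_B(0,z)=d_B'(0,\psi(z))$ for every Carath\'eodory distance-preserving $\psi\colon D\to D'$ with $\psi(0)=0$ and every $z\in D$.

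Next I would place $z$ in a flat through the origin. By the spectral decomposition in JB*-triples, $z$ can be written as $z=\sum_{i=1}^r\lambda_i e_i$, where $e_1,\dots,e_r$ is a frame and each $\lambda_i\in[0,1)\subset(-1,1)$; in particular $z$ lies in the flat associated with this frame. Lemma~\ref{lem:new_flats_mapped_to_flats} then applies, since $\psi(0)=0$: the tripotents $\psi(e_1),\dots,\psi(e_r)$ form a frame of $D'$ and $\psi(z)=\sum_{i=1}^r\lambda_i\psi(e_i)$. So $\psi(z)$ has exactly the same coordinates $(\lambda_1,\dots,\lambda_r)$ relative to the frame $\psi(e_1),\dots,\psi(e_r)$ as $z$ has relative to $e_1,\dots,e_r$. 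Finally, since the coordinate formula for the Bergman distance recalled just before the statement depends only on these coordinates through $\sum_i(\tanh^{-1}\lambda_i)^2$, I would conclude
\[
d_B(0,z)=\Big(\sum_{i=1}^r(\tanh^{-1}\lambda_i)^2\Big)^{1/2}=d_B'(0,\psi(z)),
\]
which completes the argument.

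The only genuinely non-computational point is the appeal to the spectral decomposition to guarantee that an arbitrary $z$ lies in a flat through the origin with real coordinates; everything else is bookkeeping built on Lemma~\ref{lem:new_flats_mapped_to_flats} and the coordinate formula for $d_B$. I expect the main thing to watch is ensuring that the frame chosen for $z$ is a \emph{full} frame, padding the spectral decomposition with zero-coefficient minimal tripotents if necessary, so that the hypotheses of Lemma~\ref{lem:new_flats_mapped_to_flats} are met; given that, the proof is a short assembly of the preceding results.
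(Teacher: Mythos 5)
Your proof is correct and follows essentially the same route as the paper: conjugating by M\"obius transformations to reduce to a basepoint-fixing map, writing $z = g_{-x}(y)$ in a frame, and applying Lemma~\ref{lem:new_flats_mapped_to_flats} together with the coordinate formula for the Bergman distance. Your version merely spells out the spectral-decomposition step and the preservation of coordinates more explicitly than the paper does.
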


\begin{proof}
Let $x$ and $y$ be points in $D$.
The map $\psi := g_{-\phi(x)} \after \phi \after g_{x}$ preserves the
Carath\'eodory distance and satisfies $\psi(0) = 0$.
Let $e_1, \dots, e_r$ be a frame of $D$ such that
$z := g_{-x}(y) = \lambda_1 e_1 + \dots + \lambda_r e_r$,
with each $\lambda_i \in (-1, 1)$.
By Lemma~\ref{lem:new_flats_mapped_to_flats},
we have $d_B(0, \psi(z)) = d_B(0, z)$.
Using that $g_{\phi(x)}$ preserves the Bergman distance on $D'$
and $g_{x}$ preserves the Bergman distance on $D$,
we get that $d_B(\phi(x), \phi(y)) = d_B(x, y)$.
\end{proof}

In Theorem~11.1
of~\cite{helgason_differential_geometry_lie_groups_and_symmetric_spaces},
it is shown that a surjective distance-preserving map
from a Riemannian manifold onto itself is automatically a diffeomorphism
that preserves the Riemannian structure.
Actually, the same proof gives that a distance-preserving map
from one $C^\infty$ Riemannian manifold into another is $C^\infty$
and preserves the Riemannian structure, even if it is not surjective.

\begin{lemma}
\label{lem:c_infinity}
If $\phi\colon D \to D'$ be a Carath\'eodory distance-preserving map
between two finite-dimensional bounded symmetric domains of equal rank, then $\phi$ is a $C^\infty$ map.
\end{lemma}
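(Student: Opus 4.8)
The plan is to reduce the statement to the purely Riemannian-geometric fact recalled in the paragraph immediately preceding the lemma: any distance-preserving map between $C^\infty$ Riemannian manifolds, equipped with their geodesic distances, is automatically $C^\infty$, and this holds even without surjectivity, by the argument underlying Theorem~11.1 of~\cite{helgason_differential_geometry_lie_groups_and_symmetric_spaces}. The content of the lemma is then just a matter of fitting our situation into the hypotheses of that theorem.

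First I would recall that each bounded symmetric domain $D$, being a Hermitian symmetric space, carries its Bergman metric, a real-analytic (in particular $C^\infty$) Kähler–Einstein Riemannian metric whose associated geodesic distance is exactly the Bergman distance $d_B$ used throughout the previous section. Thus $(D, d_B)$ and $(D', d_B)$ are the metric spaces underlying two $C^\infty$ Riemannian manifolds, and ``distance-preserving for the Bergman distance'' coincides with being a metric isometry between these Riemannian manifolds in the sense required by the cited theorem.

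Next I would invoke Lemma~\ref{lem:distance_preserving_for_bergman_metric}, which tells us that $\phi$ preserves the Bergman distance, that is $d_B(\phi(x), \phi(y)) = d_B(x, y)$ for all $x, y \in D$. Hence $\phi$ is a (possibly non-surjective) distance-preserving map between two $C^\infty$ Riemannian manifolds. Applying the non-surjective version of~\cite[Theorem~11.1]{helgason_differential_geometry_lie_groups_and_symmetric_spaces} quoted above then yields that $\phi$ is $C^\infty$, completing the proof.

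The only step that requires any care, and which I would regard as the sole (mild) obstacle, is the identification of the combinatorial/Jordan-theoretic quantity $d_B$ used earlier with the genuine Riemannian geodesic distance of the Bergman metric, so that the hypotheses of the cited theorem are literally satisfied; this is standard for symmetric spaces. Once this identification is in place there is no genuine analytic difficulty, and the conclusion is immediate. It is worth noting that the cited theorem in fact gives slightly more, namely that $\phi$ preserves the Bergman Riemannian structure, a fact that may be useful downstream.
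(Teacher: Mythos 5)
Your proposal is correct and follows exactly the paper's own argument: apply Lemma~\ref{lem:distance_preserving_for_bergman_metric} to see that $\phi$ preserves the Bergman distance, then invoke the non-surjective version of~\cite[Theorem~11.1]{helgason_differential_geometry_lie_groups_and_symmetric_spaces} for distance-preserving maps between $C^\infty$ Riemannian manifolds. The extra remarks about identifying $d_B$ with the Riemannian geodesic distance are sensible but do not change the substance.
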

\begin{proof}
By Lemma~\ref{lem:distance_preserving_for_bergman_metric},
the map $\phi$ preserves the Bergman distance.
Under this distance, the bounded symmetric domains are
$C^\infty$ Riemannian manifolds, and distance-preserving maps between
such manifolds are $C^\infty$.
\end{proof}

\section{Proof of the main results}

To prove Theorem~\ref{thm:respects_product} we need to capture
the structure of a bounded symmetric domain $D$ as a product irreducible factors,
in terms of its minimal tripotents.
To do this we define the following equivalence relation on the set of minimal tripotents
of a JB*-triple.
We say that $u \chainlinked v$ if there is a finite sequence $e_1, \dots, e_n$
of minimal tripotents such that $e_1 = u$ and $e_n = v$,
and such that no two consecutive elements are orthogonal.
Let $M_i$, $i\in I$, be the equivalence classes of minimal tripotents,
and let $V_i$ be the real linear span of $M_i$, for each $i$.
Observe that, if $i$ and $j$ are distinct, then every element of $V_i$ is orthogonal to every element of $V_j$,
and hence the number of distinct $V_i$'s is at most the rank of $V$. 

Recall that the set of minimal tripotents of a product of JB*-triples
is the union of the sets of minimal tripotents of the factors.
That is, each minimal tripotent is of the form
$(e_1, 0)$ or $(0, e_2)$, where $e_1$ and $e_2$ are minimal tripotents
of the respective factors.
Minimal tripotents coming from different factors are of course orthogonal.

A \emph{subtriple} of a JB*-triple is a closed subspace that is also closed
with respect to the triple product.

\begin{lemma}
\label{lem:in_same_component_if_non_ortho_chain}
Let $V$ be a finite-dimensional JB*-triple,
and let $V_i$ be the subspaces defined above.
Then, each $V_i$ is an irreducible  subtriple of $V$,
and $V = V_1 \oplus \dots \oplus V_n$.
\end{lemma}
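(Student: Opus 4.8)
The plan is to first establish the vector-space decomposition $V = V_1 \oplus \dots \oplus V_n$, then promote it to a decomposition into ideals (which in particular gives that each $V_i$ is a subtriple), and finally deduce irreducibility from the chain-connectedness built into the definition of the classes $M_i$. To begin, I would observe that each $V_i$ is a complex subspace: since a minimal tripotent $e$ and any nonzero scalar multiple $\mu e$ satisfy $\triprod{e}{e}{\mu e} = \mu e \neq 0$, they are non-orthogonal and hence lie in the same class, so $M_i$ is stable under multiplication by unit scalars and its real span $V_i$ is complex-linear. As $V$ is finite-dimensional, every element is a linear combination of minimal tripotents (the spectral decomposition, see~\cite{chu_Jordan_structures_in_geometry_and_analysis}), and each minimal tripotent lies in exactly one $M_i$; thus $V = V_1 + \dots + V_n$. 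The sum is direct because distinct $V_i$ are mutually orthogonal and any self-orthogonal element $x$ satisfies $\|x\|^2 = \|x \smallsquare x\| = 0$ by axiom (iv), so orthogonal subspaces meet only in $0$.

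Next I would upgrade this to a decomposition by ideals. Writing $\triprod{x}{y}{z} = (x \smallsquare y)(z)$ and using the symmetry of the triple product in its outer variables, the orthogonality relation $V_i \smallsquare V_j = 0$ for $i \neq j$ immediately yields $\{V_i, V_j, V_k\} = 0$ whenever the three indices are not all equal. The remaining point, which I expect to be the main obstacle, is to show $\{V_i, V_i, V_i\} \subseteq V_i$, that is, that the triple product does not leak into other factors. For $w := \triprod{x}{y}{z}$ with $x,y,z \in V_i$ and a minimal tripotent $a \in M_j$ with $j \neq i$, I would apply the main identity (axiom (ii)) to $a \smallsquare a(w)$ and use that $a \smallsquare a$ annihilates $V_i$ (again a consequence of orthogonality and outer symmetry) to conclude $a \smallsquare a(w) = 0$. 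Hence $w$ is orthogonal to every minimal tripotent of every $V_j$ with $j \neq i$, and therefore to all of $V_j$. Writing $w = \sum_l w_l$ with $w_l \in V_l$, orthogonality of the distinct factors gives $w_j \smallsquare w = w_j \smallsquare w_j$, which vanishes since $w \perp V_j$; thus $w_j \smallsquare w_j = 0$ and $w_j = 0$ by axiom (iv). This forces $w \in V_i$, so each $V_i$ is an ideal, in particular a closed subtriple, and $V = V_1 \oplus \dots \oplus V_n$ as JB*-triples.

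For irreducibility I would first record that the minimal tripotents of the subtriple $V_i$ are exactly the elements of $M_i$: since $V_i$ is a direct summand, $\triprod{u}{u}{z} = 0$ for $z \in V_j$ with $j\neq i$, so $V_2(u) \subseteq V_i$ for every tripotent $u \in V_i$, and minimality of $u$ in $V_i$ therefore coincides with minimality in $V$. Suppose now $V_i = W_1 \oplus W_2$ were a direct sum of two nonzero ideals. The cross triple products vanish (as $W_1 \cap W_2 = 0$ and both absorb), so $W_1$ and $W_2$ are mutually orthogonal; moreover any minimal tripotent $f$ of $V_i$ lies in a single summand, since if $f = f_1 + f_2$ with nonzero orthogonal tripotents $f_1 \in W_1$, $f_2 \in W_2$, then $V_2(f)$ would contain the independent elements $f_1, f_2$, contradicting minimality. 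Choosing minimal tripotents $u \in W_1$ and $v \in W_2$, both of which lie in $M_i$, the defining chain $u \chainlinked v$ of pairwise non-orthogonal minimal tripotents must pass from $W_1$ to $W_2$ at some consecutive pair, contradicting their orthogonality. Hence no such splitting exists, and each $V_i$ is irreducible.
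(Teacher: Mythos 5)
Your proposal is correct and follows essentially the same route as the paper: complex-linearity of the $V_i$ from stability of $M_i$ under unit scalars, spanning via minimal tripotents, closure under the triple product by applying the Jordan identity to $\{a,a,\{e_1,e_2,e_3\}\}$ for $a$ orthogonal to the $e_k$, and irreducibility from the impossibility of splitting a chain-connected class into two mutually orthogonal subsets. You supply more detail than the paper at a few points (directness of the sum, the deduction that $w\in V_i$ from its orthogonality to the other factors, and the minimal-tripotent analysis in the irreducibility step); the only slight imprecision is invoking the chain relation for an arbitrary nonzero multiple $\mu e$, whereas only unit-modulus multiples are tripotents --- but these already span $\mathbb{C}e$, so the argument stands.
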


\begin{proof}
If $u$ is a minimal tripotent and $\lambda\in \C$, with $|\lambda|=1$,
then $u\chainlinked \lambda u$.
It follows that each $V_i$ is a complex linear subspace of $V$.

Since every element of $V$ can be written as a linear combination
of minimal tripotents, it can also be written as a sum of elements,
one from each of the $V_i$.

To see that each $V_i$ is closed under triple products,
fix $i$ and let $u, v, w \in V_i$.
We can write $\{u, v, w\}$ as a real linear combination of
elements of the form $\{e_1, e_2, e_3\}$,
where each of the $e_j$ are minimal tripotents in $M_i$.
Consider such an element, and let $c$ be in $M_j$, with $j \neq i$.
So, $c$ is orthogonal to each of $e_1$, $e_2$, and $e_3$.
Therefore, by the Jordan identity,
\begin{equation*}
\big\{c, c, \{e_1, e_2, e_3\}\big\}
    = \big\{\{c,c,e_1\}, e_2, e_3\big\} - \big\{e_1, \{c,c,e_2\}, e_3\big\}
         + \big\{e_1, e_2, \{c,c,e_3\}\big\}
    = 0.
\end{equation*}
So, $c$ is orthogonal to $\{e_1, e_2, e_3\}$.
We deduce that $\{u, v, w\}$ is orthogonal to every minimal tripotent
apart from those in $M_i$, and it follows that $\{u, v, w\}$ lies in $V_i$.

If $V_i$ were reducible, for some $i$, then we could partition its minimal
tripotents into two subsets in such a way that every element of one subset was
orthogonal to every element of the other; however this is clearly impossible.
\end{proof}

Recall that if $D = D_1 \times \dots \times D_n$
is a product of bounded symmetric domains, and each $D_i$ has rank $r_i$,
then $D$ has rank $r_1 + \dots + r_n$, and every frame of $D$
has exactly $r_i$ minimal tripotents coming from $D_i$.
Let $x := (x_1, \dots, x_n)$ be in $D$, with each $x_i$ in $D_i$.
The M\"obius transformation $g_x$ decomposes as
$g_x (y)  = (g_{x_1} (y_1), \dots, g_{x_n} (y_n))$,
for all $y = (y_1, \dots, y_n)$ in $D$.

\begin{proof}[Proof of Theorem~\ref{thm:respects_product}]
We first establish the result for the map $\psi := g_{-\phi(0)} \after \phi$.
The statement for $\phi$ then follows immediately,
because M\"obius transformations act on each component separately.
Observe that $\psi(0) = 0$, so $\psi$ preserves the Gromov product.

Let $u$ and $v$ be minimal tripotents belonging to the same factor $D_i$
of $D$.
By Lemma~\ref{lem:in_same_component_if_non_ortho_chain},
we have $u \chainlinked v$, so there exists a sequence $u=e_1,\ldots,e_n=v$ of minimal tripotents such that no two consecutive elements are orthogonal.
By Lemma~\ref{lem:preserves_orthogonality},
no two consecutive elements of the sequence $\psi(e_1),\ldots,\psi(e_n)$
are orthogonal.
Hence, $\psi(u) \chainlinked \psi(v)$,
and it follows from Lemma~\ref{lem:in_same_component_if_non_ortho_chain} that $\psi(u)$ and $\psi(v)$ are in the same factor of $D'$. 
Define the map $\indexfn$ as follows. For each factor $D_i$ of $D$,
choose a minimal tripotent $e$ in $D_i$, and set $\indexfn(i) = k$,
where $D'_k$ is the factor of $D'$ containing $\psi(e)$.
From what we have seen in the previous paragraph, this map is well-defined.

To show that $\indexfn$ is surjective,
let $D'_k$ be a factor of $D'$ and take any frame $e_1, \dots, e_r$ of $D$.
So, $\psi(e_1),\dots,\psi(e_r)$ is a frame of $D'$, and hence
contains a minimal tripotent of $D'_k$, say $\psi(e_j)$.
We then have $\indexfn(l) = k$,
where $D_l$ is the factor of $D$ containing $e_j$.

Fix a factor $D'_i$ of $D'$, and denote by $P_i$ the projection onto
this factor.
For $x \in D_{\indexfn^{-1}(i)}$,
define $\psi_i(x) := P_i \psi(y)$,
where $y$ is any element of $D$ such that $y_{\indexfn^{-1}(i)} = x$.
To show that this is well-defined, take any such $y$,
and let $e_1, \dots, e_r$ be a frame such that
$y = \lambda_1 e_1 + \dots + \lambda_r e_r$, with each $\lambda_i \in (-1, 1)$.
We order the elements of the frame so that
$e_1, \dots, e_s$ are the ones contained in $D_{\indexfn^{-1}(i)}$,
with $s \le r$. By the definition of the map $J$,
the minimal tripotents $e_1, \dots, e_s$ are exactly the elements of the frame
that are mapped to tripotents of $D_{\indexfn^{-1}(i)}$.
Applying Lemma~\ref{lem:new_flats_mapped_to_flats},
we have that $P_i \psi(y) = \lambda_1 \psi(e_1) + \dots + \lambda_s \psi(e_s)$.
So we see that $\psi_i(y)$ only depends on the components
in $\indexfn^{-1}(i)$.
\end{proof}

\begin{lemma}
\label{lem:real_linear}
Let $\phi\colon D \to D'$ be a Carath\'eodory distance-preserving map
between two finite-dimensional bounded symmetric domains of equal rank $r$,
contained in JB*-triples $V$ and $V'$, respectively.
If $\phi(0) = 0$, then $\phi$ is the restriction to $D$ of a real linear map
from $V$ to $V'$.
\end{lemma}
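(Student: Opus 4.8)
The plan is to produce a single global real linear map $L\colon V\to V'$ by differentiating $\phi$ at the origin, and then to show that $\phi$ and $L$ agree on all of $D$ by checking agreement flat by flat. Since $\phi(0)=0$ and $\phi$ is $C^\infty$ by Lemma~\ref{lem:c_infinity}, its differential $L := D\phi(0)$ is a well-defined real linear map from $V$ to $V'$. The remaining task is to verify that $\phi(x) = L(x)$ for every $x\in D$.

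First I would fix an arbitrary frame $e_1,\dots,e_r$ of $D$, with associated flat $F = \{\sum_i \lambda_i e_i : \lambda_i\in(-1,1)\}$. Lemma~\ref{lem:new_flats_mapped_to_flats} gives $\phi(\sum_i \lambda_i e_i) = \sum_i \lambda_i \phi(e_i)$, so the restriction $\phi|_F$ coincides with the real linear map $A_F$ sending each $e_i$ to $\phi(e_i)$ on the real span $W := \mathrm{span}_{\R}(e_1,\dots,e_r)$. Because $F$ is a relatively open neighbourhood of $0$ in $W$ and $\phi|_F$ is the restriction of a linear map, the differential of $\phi$ at $0$ in the directions of $W$ is exactly $A_F$; that is, $L|_W = A_F$, and hence $\phi(x) = L(x)$ for all $x\in F$.

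It then remains to observe that every point of $D$ lies in such a flat through the origin. Given $x\in D$, its spectral decomposition in the JB*-triple $V$ writes $x = \sum_i \lambda_i e_i$ with the $e_i$ mutually orthogonal minimal tripotents and each $\lambda_i\in(0,1)$, the inequality $\lambda_i<1$ coming from $\|x\|<1$. Extending this orthogonal system to a frame by adjoining further orthogonal minimal tripotents (with zero coefficients) places $x$ in the flat $F$ of that frame, so $\phi(x)=L(x)$ by the previous step. As $x\in D$ was arbitrary, $\phi$ is the restriction to $D$ of the real linear map $L$.

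I expect the main conceptual point to be the patching step: although $\phi$ is linear on each individual flat through the origin, the per-flat linear maps $A_F$ must be shown to be restrictions of one and the same global linear map on $V$, whose real dimension far exceeds that of any single $W$. Taking $L$ to be the single object $D\phi(0)$ and verifying $L|_W = A_F$ for every frame is what makes this work, and it is precisely here that the smoothness supplied by Lemma~\ref{lem:c_infinity} is used. The only other ingredient is the standard fact that the spectral decomposition covers $D$ by flats through the origin, which is routine in finite dimensions.
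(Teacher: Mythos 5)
Your proposal is correct and follows essentially the same route as the paper: both take $L := D\phi(0)$ (available via Lemma~\ref{lem:c_infinity}), use Lemma~\ref{lem:new_flats_mapped_to_flats} to see that $\phi$ is linear on each flat through the origin so that $L$ agrees with $\phi$ there, and cover $D$ by such flats via the spectral decomposition. The paper phrases the agreement step as a tangent-vector computation along the curve $t\mapsto tx$, but this is the same argument as your $L|_W = A_F$ observation.
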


\begin{proof}
By Lemma~\ref{lem:c_infinity}, the map $\phi$ is $C^\infty$.
Denote by $D\phi(0) \colon V \to V'$ its derivative at $0$,
which is a real linear map.
Let $x\in D$. So, there exists a frame $e_1, \dots, e_r$
such that $x = \lambda_1 e_1 + \dots + \lambda_r e_r$,
with each $\lambda\in(-1, 1)$.
For each $t\in [-1,1]$, let
\begin{equation*}
\gamma(t) := \sum_i t \lambda_i e_i.
\end{equation*}
So, $\gamma$ is a smooth curve such that $\gamma(0) = 0$ and $\gamma(1) = x$.
Its tangent vector at $0$ is $x$.
By Lemma~\ref{lem:new_flats_mapped_to_flats},
\begin{equation*}
\phi\big(\gamma(t)\big) := \sum_i t \lambda_i \phi(e_i),
\qquad\text{for all $t\in (-1,1)$}.
\end{equation*}
The tangent vector of this curve at $0$ is $\phi(x)$.
Therefore, $D\phi(0)$ and $\phi$ agree at $x$.
We conclude that $\phi$ is the restriction of $D\phi(0)$ to $D$.
\end{proof}

\begin{lemma}
\label{lem:singletons_map_is_continuous}
Let $\phi \colon D \to D'$ be a Carath\'eodory distance-preserving map
between two finite dimensional bounded symmetric domains of equal rank.
Assume that $\phi(0) = 0$.
Then, the induced map on the set of minimal tripotents is continuous.
\end{lemma}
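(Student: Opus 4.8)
The plan is to exploit the fact that, once normalised so that $\phi(0)=0$, the map $\phi$ is no longer merely an abstract distance-preserving map but the restriction of a genuine real-linear map, and that the induced tripotent map is nothing but this linear map evaluated on the appropriate part of the unit sphere. Concretely, by Lemma~\ref{lem:real_linear} there is a real-linear map $L := D\phi(0)\colon V \to V'$ whose restriction to $D$ coincides with $\phi$. Since $V$ is finite-dimensional, $L$ is automatically norm-continuous. Thus the whole lemma reduces to the identification of the induced map on minimal tripotents with the restriction of $L$ to the set of minimal tripotents.

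To carry out this identification, I would fix a minimal tripotent $e$ and extend it to a frame $e = e_1, e_2, \dots, e_r$. For any $\lambda \in (0,1)$ the point $\lambda e$ lies in $D$, so Lemma~\ref{lem:new_flats_mapped_to_flats} applies and gives $\phi(\lambda e) = \lambda \phi(e_1) = \lambda \phi(e)$, where $\phi(e)$ denotes the image minimal tripotent furnished by Lemma~\ref{lem:singletons_to_singletons}. On the other hand, real-linearity gives $\phi(\lambda e) = L(\lambda e) = \lambda L(e)$. Cancelling $\lambda$ yields $\phi(e) = L(e)$, so the induced map on minimal tripotents is exactly $L$ restricted to that set.

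Once this is in place, continuity is immediate. The set of minimal tripotents carries the subspace topology inherited from the norm of $V$, and the induced map is the restriction of the continuous linear map $L$; hence it is continuous. In particular, if a sequence of minimal tripotents converges in norm, its images under the induced map converge to the image of the limit.

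The main obstacle is precisely the identification step, and its subtlety is conceptual rather than computational: the tripotent map is defined in terms of boundary data, namely the singleton Busemann points at infinity and their correspondence with minimal tripotents of $\partial D$, whereas $L$ is defined by differentiation at the interior point $0$. Reconciling these two descriptions is exactly what the homogeneity computation above accomplishes. The only care required is to keep $\lambda e$ strictly inside $D$, so that both Lemma~\ref{lem:new_flats_mapped_to_flats} and the linear extension from Lemma~\ref{lem:real_linear} are legitimately applicable.
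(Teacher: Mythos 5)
Your argument is correct, and there is no circularity: Lemma~\ref{lem:real_linear} is proved before this lemma and does not depend on it. Both your proof and the paper's rest on the same key identity $\phi(\lambda e)=\lambda\phi(e)$ for $\lambda\in(0,1)$, obtained from Lemma~\ref{lem:new_flats_mapped_to_flats} by taking a frame containing $e$; this is what ties the boundary-defined tripotent map to the values of $\phi$ at interior points. Where you diverge is in how continuity is then extracted. The paper finishes elementarily: if $e_n\to e$ in norm, then $\lambda e_n\to\lambda e$ in $D$, so $\phi(\lambda e_n)\to\phi(\lambda e)$ simply because a distance-preserving map is continuous on $D$, and dividing by $\lambda$ gives $\phi(e_n)\to\phi(e)$. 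You instead identify the induced tripotent map with the restriction of the real-linear map $L=D\phi(0)$ from Lemma~\ref{lem:real_linear} and invoke the continuity of $L$ on the finite-dimensional space $V$. This is valid, and it yields the marginally stronger conclusion that the tripotent map extends to a continuous real-linear map on all of $V$; the cost is that it routes through heavier machinery, since Lemma~\ref{lem:real_linear} rests on Lemma~\ref{lem:c_infinity}, i.e.\ on the smoothness of Bergman-distance-preserving maps between Riemannian manifolds, whereas the paper's proof needs nothing beyond the homogeneity identity and the trivial continuity of $\phi$ on $D$.
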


\begin{proof}
Let $e$ be a minimal tripotent of the JB*-triple $V$ of which $D$ is the
open unit ball, and let $\lambda \in (-1, 1)$.
By considering any frame containing $e$
and using Lemma~\ref{lem:new_flats_mapped_to_flats},
we get that $\phi(\lambda e) = \lambda \phi(e)$.

Now fix $\lambda \in (0, 1)$,
and let $e_n$ be a sequence of minimal tripotents of $V$ converging to $e$
in the norm topology of $V$. So, $\lambda e_n$ converges to $\lambda e$.
Hence, $\phi(\lambda e_n)$ converges to $\phi(\lambda e)$,
since $\phi$ is continuous on $D$. Therefore, by what we have seen above,
$\lambda \phi(e_n)$ converges to $\lambda\phi(e)$, and the conclusion follows
upon dividing by $\lambda$.
\end{proof}

\begin{proof}[Proof of Theorem~\ref{thm:holomorphic_or_antiholomorphic}]
By composing $\phi$ with a M\"obius transformation if necessary,
we can assume that $\phi(0) = 0$.

Recall that $\phi$ induces a map, also denoted by $\phi$,
from the set of Busemann points of $D$ to those of $D'$
with the property that every almost geodesic converging
to a Busemann point $\xi$ of $D$ is mapped to an almost-geodesic
converging to $\phi(\xi)$;
see Section~\ref{sec:isometric_embeddings}.
Moreover, by Lemma~\ref{lem:singletons_to_singletons},
singletons are mapped to singletons.
Since the singletons are in one-to-one correspondence with the minimal
tripotents, we get an induced map from the minimal tripotents of $D$
to those of $D'$.
By Lemma~\ref{lem:singletons_map_is_continuous}, this map,
still denoted by $\phi$, is continuous.

Combining~Lemma~\ref{lem:characterise_multiplication_by_i} with
Propositions~\ref{prop:characterise_opposite}
and~\ref{prop:characterise_orthogonality},
and using that $\phi$ preserves the Gromov product, we see that if
$u$ and $v$ are minimal tripotents of $D$ satisfying $u = \pm \mathrm{i} v$,
then $\phi(u) = \pm \mathrm{i} \phi(v)$.
In other words, for each minimal tripotent $e$ of $D$, we have that either
$\phi(\mathrm{i}e) = \mathrm{i}\phi(e)$ or $\phi(\mathrm{i}e) = -\mathrm{i}\phi(e)$.
Define $c(e)$ to be $\mathrm{i}$ in the former case, and $-\mathrm{i}$ in the latter.
Thus, we obtain a function $c$ from the set $E$ of minimal tripotents of $D$
to $\{\mathrm{i}, -\mathrm{i}\}$,
This map is continuous since $\phi$ is continuous on $E$.

We have assumed that $D$ is irreducible, and therefore $E$ is connected.
We deduce that  the map $c$ is constant, that is,
takes either only the value $\mathrm{i}$ or only the value $-\mathrm{i}$
on the whole of $E$.
We will treat the former case and show that $\phi$ is holomorphic.
In the latter case, it can be shown in the same way that $\phi$ is
anti-holomorphic.

Let $x\in D$. So, there exists a frame $e_1, \dots, e_r$
such that $x = \lambda_1 e_1 + \dots + \lambda_r e_r$,
with each $\lambda_i\in(-1, 1)$.
Hence,
$\mathrm{i}x = \lambda_1 \mathrm{i} e_1 + \dots + \lambda_r \mathrm{i}e_r$.
Applying Lemma~\ref{lem:new_flats_mapped_to_flats}, and using that $c$
is identically equal to $\mathrm{i}$, we get
\begin{align*}
\phi(\mathrm{i}x)
  &= \lambda_1 \phi(\mathrm{i} e_1) + \dots + \lambda_r \phi(\mathrm{i} e_r) \\
  &= \lambda_1 \mathrm{i} \phi(e_1) + \dots + \lambda_r \mathrm{i} \phi(e_r) \\
  &= \mathrm{i} \phi(x).
\end{align*}

From Lemma~\ref{lem:real_linear}, we know that $\phi$ on $D$ is the restriction
of a real linear map.
We have proved that it is actually the restriction of a complex
linear map, and hence it is holomorphic.
\end{proof}

Kaup~\cite{kaup_a_riemann_mapping_theorem_for_bounded_symmetric_domains_in_complex_banach_spaces}
showed that every \emph{surjective} complex linear map between JB*-triples
that preserves the norm is a triple homomorphism.
This is not necessarily true however for maps that are not surjective.
Nevertheless, Chu and Mackey~\cite{chu_mackey_isometries_between_jbstar_triples}
have shown the following.

\begin{theorem}[Chu---Mackey]
\label{thm:chu_mackey}
Let $\phi \colon V \to V'$ be a complex linear map between JB*-triples
that preserves the norm. Denote by $D$ and $D'$ the open unit balls
of $V$ and $V'$, respectively. Then, $\phi$ is a triple homomorphism
if and only if $\phi(D)$ is invariant under the M\"obius transformation
$g_{\phi(x)}$, for all $x\in D$.
\end{theorem}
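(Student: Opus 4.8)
The plan is to prove the two implications separately, with the backward direction carrying the real content, and to organise both around the image subspace $W := \phi(V)$. Since $\phi$ is complex linear and norm-preserving, it is injective, $W$ is a closed complex subspace of $V'$ (being the image of an isometry out of a Banach space), and the identity $\|\phi(x)\| = \|x\|$ gives $\phi(D) = B_W$, where $B_W := \{w \in W : \|w\| < 1\}$ is the open unit ball of $W$. Note also that every point of $B_W$ is of the form $\phi(x)$ with $x \in D$, so the hypothesis ``$\phi(D)$ is invariant under $g_{\phi(x)}$ for all $x \in D$'' says precisely that $B_W$ is invariant under $g_w$ for every $w \in B_W$.

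For the forward direction, suppose $\phi$ is a triple homomorphism. Then $W$ is a JB*-subtriple of $V'$, and for each $w \in B_W$ the Bergman operator $B(w,w)$ leaves $W$ invariant because $w \smallsquare w$ and $Q_w$ do. A positive Hermitian operator leaving a closed subspace invariant also leaves it invariant under its square root (the square root is a norm limit of polynomials in the operator), so $B(w,w)^{1/2}$ restricts on $W$ to the corresponding operator of the subtriple $W$. Consequently $g_w$ restricts on $B_W$ to the Möbius transformation of $W$, which maps $B_W$ into itself; hence $\phi(D) = B_W$ is invariant under every $g_{\phi(x)}$.

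For the converse, the goal reduces to the purely structural claim that a closed complex subspace $W \subseteq V'$ whose unit ball is invariant under all the maps $\{g_w : w \in B_W\}$ must be a JB*-subtriple. To prove this I would use a tangency argument. Fix $a \in W$ and $z \in B_W$. For $|t|$ small enough that $ta \in B_W$, the hypothesis gives $g_{ta}(B_W) = B_W$, so the curve $t \mapsto g_{ta}(z)$ stays in $B_W \subseteq W$ and therefore its derivative at $t = 0$ lies in $W$. Differentiating the defining formula $g_{ta}(z) = ta + B(ta,ta)^{1/2}(\id + z \smallsquare ta)^{-1}(z)$, and using \eqref{eqn:bergman_definition} together with $B(ta,ta) = \id + O(t^2)$ (so that $B(ta,ta)^{1/2} = \id + O(t^2)$) and $(\id + z \smallsquare ta)^{-1}(z) = z - t\{z,a,z\} + O(t^2)$, this derivative equals $a - \{z,a,z\}$. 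Since $a \in W$, we conclude $\{z,a,z\} = Q_z(a) \in W$ for all $z \in B_W$ and $a \in W$. Scaling in $z$ (via $Q_{\lambda z} = \lambda^2 Q_z$) extends this to all $z \in W$, and polarising through $\{x,a,y\} = \tfrac12\big(Q_{x+y}(a) - Q_x(a) - Q_y(a)\big)$ shows $W$ is closed under the triple product, hence is a JB*-subtriple. Then $\phi \colon V \to W$ is a surjective complex-linear norm-preserving map between JB*-triples, so by Kaup's theorem (stated above) it is a triple isomorphism; as $W$ carries the restricted triple product, $\phi \colon V \to V'$ preserves the triple product, i.e.\ is a triple homomorphism.

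The conceptual heart, and the step I expect to require the most care, is the backward tangency computation: it is exactly the point where the \emph{geometric} invariance condition is decoded into the \emph{algebraic} triple product, through the infinitesimal generator $z \mapsto a - \{z,a,z\}$ of the Möbius flow. Making this rigorous requires justifying that $t \mapsto g_{ta}(z)$ is differentiable and that the Taylor expansion of the Möbius formula may be read off termwise, which rests on the real-analytic dependence of $g_w(z)$ on $w$ near the origin and on $W$ being a closed subspace (so that a $W$-valued curve has its derivative in $W$). The remaining technical point, common to both directions, is the functional-calculus fact that a closed subspace invariant under a positive operator is invariant under its square root and that the restricted square root is the square root of the restriction; this is routine but should be stated explicitly.
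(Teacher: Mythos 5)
The paper contains no proof of this statement: Theorem~\ref{thm:chu_mackey} is quoted, with attribution, from Chu and Mackey~\cite{chu_mackey_isometries_between_jbstar_triples}, and is used as a black box in the proof of Theorem~\ref{thm:triple_homomorphism}. So there is no internal argument to compare yours against; what can be judged is whether your reconstruction is correct, and it is.

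Your reduction to the range $W = \phi(V)$ is sound: a linear isometry out of a Banach space has closed range, and $\phi(D)$ is exactly the open unit ball $B_W$ of $W$. In the forward direction, note that you do not really need to identify the restriction of $g_w$ to $B_W$ with the intrinsic M\"obius transformation of the subtriple $W$: it suffices that every operator occurring in the defining formula preserves the closed subspace $W$ (the Neumann series for $(\id + z \smallsquare w)^{-1}$, the powers of $B(w,w)$, and the norm-limit of polynomials giving $B(w,w)^{1/2}$), together with the fact that $g_w$ maps $D'$ into $D'$; this yields $g_w(B_W) \subseteq B_W$, and equality follows by applying the same to $g_{-w} = g_w^{-1}$. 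The backward direction carries the content, and your tangency computation is right: for real $t$ one has $z \smallsquare (ta) = t\,(z \smallsquare a)$ and $B(ta,ta) = \id + O(t^2)$, so
\begin{equation*}
g_{ta}(z) = z + t\bigl(a - \{z,a,z\}\bigr) + O(t^2),
\end{equation*}
and since the curve lies in the closed subspace $W$, its derivative $a - Q_z(a)$ lies in $W$, whence $Q_z(a) \in W$. (This is the classical fact that $a - Q_z(a)$ is the infinitesimal generator of the M\"obius flow, which is precisely why the geometric invariance hypothesis decodes into the algebra.) Homogeneity of $Q_z$ in $z$ and polarisation over the symmetric outer variables then show $W$ is a closed subtriple; combined with the standard fact that a closed subtriple of a JB*-triple is itself a JB*-triple, Kaup's surjective theorem (quoted in the paper immediately before the statement) applied to $\phi \colon V \to W$ finishes the argument. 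The technical points you flag---real-analyticity of $t \mapsto g_{ta}(z)$ near $t=0$, and the compatibility of the square root with invariant closed subspaces via functional calculus---are genuine but routine. Strategically this is the natural (and, as far as the literature goes, the known) route: invariance forces the range to be a subtriple, and surjective isometries onto JB*-triples are triple isomorphisms.
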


For each $x\in D$, denote by $S_x$ the geodesic symmetry in $x$.
This means that, for each $y\in D$, the points $y$, $x$, and $S_x (y)$
lie on a Bergman geodesic, and $d_B(y, x) = d_B(x, S_x (y))$,
where $d_B$ is the Bergman distance.

\begin{lemma}
\label{lem:relation_mobius_flip}
If $w$ and $z$ in $D$ are such that $z = S_w (0)$,
then $g_z = S_w \after  S_0$.
\end{lemma}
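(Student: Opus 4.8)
The plan is to express both geodesic symmetries through Möbius transformations and then reduce the identity to the composition rule of Lemma~\ref{lem:add_mobius_maps}. Two preliminary facts drive the argument: the symmetry at the origin is $S_0 = -\id$, and the symmetry at a general point is the corresponding conjugate, $S_w = g_w \circ S_0 \circ g_{-w}$. For the first, any $x$ lies in a flat through the origin, on which $\exp_0$ is an isometry onto $(\R^r, \|\cdot\|_\infty)$; the reflection through $0$ there is $p\mapsto -p$, and transporting it back gives $x\mapsto -x$. For the second, since $g_w$ is a biholomorphic Bergman isometry with $g_w(0)=w$ and inverse $g_{-w}$, the map $g_w \circ S_0 \circ g_{-w}$ is an involutive isometry fixing $w$ whose derivative at $w$ is the conjugate of $dS_0(0)=-\id$, hence equals $-\id$. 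By the characterisation of the geodesic symmetry as the reflection reversing geodesics through $w$, this map is exactly $S_w$.

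The key computational step is the conjugation identity
\[
S_0 \circ g_a \circ S_0 = g_{-a}, \qquad\text{for all } a\in D,
\]
which I would verify directly from the definition of $g_a$. Writing $S_0 \circ g_a \circ S_0(x) = -g_a(-x)$, one uses that $B(-a,-a)=B(a,a)$ and $x\smallsquare(-a) = -(x\smallsquare a)$ — both immediate from the formulas for the Bergman and box operators together with the (conjugate-)linearity of the triple product — to obtain $-g_a(-x) = -a + B(a,a)^{1/2}(\id - x\smallsquare a)^{-1}(x) = g_{-a}(x)$.

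With these in hand the conclusion follows quickly. Applying the conjugation identity with $a=-w$ gives $S_0 \circ g_{-w} \circ S_0 = g_w$, so
\[
S_w \circ S_0 = g_w \circ (S_0 \circ g_{-w} \circ S_0) = g_w \circ g_w.
\]
Choosing a frame $e_1,\dots,e_r$ and reals $\alpha_i$ with $w = \sum_i \tanh(\alpha_i) e_i$, Lemma~\ref{lem:add_mobius_maps} yields $g_w \circ g_w = g_c$, where $c = \sum_i \tanh(2\alpha_i) e_i$, and the computation within the proof of that lemma also gives $g_w(w) = c$. Since $z = S_w(0) = g_w(S_0(g_{-w}(0))) = g_w(S_0(-w)) = g_w(w) = c$, we conclude $g_z = g_c = g_w \circ g_w = S_w \circ S_0$. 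I expect the main obstacle to be the conjugation identity together with the clean justification that $S_w = g_w \circ S_0 \circ g_{-w}$; once these are in place, assembling the result via Lemma~\ref{lem:add_mobius_maps} is routine.
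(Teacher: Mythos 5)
Your proof is correct, and it reaches the conclusion by a genuinely different route from the paper. Both arguments use Lemma~\ref{lem:add_mobius_maps} to identify $g_z$ with $g_w \after g_w$ (equivalently, to see that $z = \sum_i \tanh(2\alpha_i)e_i$ when $w = \sum_i\tanh(\alpha_i)e_i$), but they diverge in how they show $S_w \after S_0 = g_w \after g_w$. The paper treats $g_z$ and $S_w \after S_0$ as two holomorphic maps, checks that both send $-w$ to $w$, computes the derivative of $g_z$ at $-w$ on each joint Peirce space via Lemma~\ref{lem:bergman_same_flat} to see it is the identity, notes the same for $(S_w\after S_0)'(-w)$, and concludes by Cartan's uniqueness theorem. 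You instead establish the algebraic conjugation identity $S_0 \after g_a \after S_0 = g_{-a}$ directly from the formula for the M\"obius transformation (using $B(-a,-a)=B(a,a)$ and $x\smallsquare(-a)=-(x\smallsquare a)$), which together with $S_w = g_w\after S_0\after g_{-w}$ yields $S_w\after S_0 = g_w\after g_w$ with no derivative computation and no appeal to Cartan's theorem. The trade-off is that your route must make explicit the standard facts $S_0=-\id$ and $S_w = g_w\after S_0\after g_{-w}$, which you justify adequately (via flats and via conjugation of the metric definition of the symmetry); the paper uses essentially the same facts implicitly when it asserts the value and derivative of $S_w\after S_0$ at $-w$. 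Your version is arguably more self-contained, at the cost of a slightly longer algebraic verification; the paper's is shorter granted Cartan's uniqueness theorem and the Peirce-space computation of $B(z,z)^{1/2}B(-w,-z)^{-1}$.
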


\begin{proof}
Let $w = w_1 e_1 + \dots + w_r e_r$ be written in terms of
some frame $e_1, \dots, e_r$, with each $w_i \in (-1, 1)$.
So, $z = z_1 e_1 + \dots + z_r e_r$, where $z_i = 2 w_i / (1 + w_i^2)$
for all $i$.
By Lemma~\ref{lem:add_mobius_maps}, $g_z = g_w \after g_w$.
We deduce that $g_z$ maps $-w$ to $w$.
Observe that the same is also true for $S_w \after  S_0$.

Now we compare the derivatives at $-w$.
Let $x$ be in the joint Peirce space $V_{ij}$,
for some $i,j\in\{0, \dots, r\}$.
From Lemma~\ref{lem:bergman_same_flat},
\begin{equation*}
g'_z(-w) x = B(z, z)^{1/2} B(-w, -z)^{-1} x 
 = \frac {\big(1 - z_i^2\big)^{1/2} \big(1 - z_j^2\big)^{1/2}}
             {(1 - w_i z_i) (1 - w_j z_j)} x = x.
\end{equation*}
We deduce that $g'_z(-w)$ is the identity map.
Since $(S_w \after  S_0)'(-w)$ is also the identity map,
the two derivatives are equal at $-w$.
The result now follows from Cartan's uniqueness theorem.
\end{proof}

\begin{lemma}
\label{lem:preserves_reflection}
Let $\phi\colon D \to D'$ be a Bergman distance-preserving map
between two finite-dimensional bounded symmetric domains.
Then, $\phi(S_x (y)) = S_{\phi(x)} (\phi(y))$, for all $x$ and $y$ in $D$.
\end{lemma}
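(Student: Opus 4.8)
The plan is to characterise the geodesic symmetry $S_x$ purely metrically and then exploit that a distance-preserving map respects all metric relations. The essential structural input is that a finite-dimensional bounded symmetric domain equipped with its Bergman metric is a Hadamard manifold: it is complete, simply connected, and of nonpositive sectional curvature, being a Hermitian symmetric space of noncompact type. Consequently both $D$ and $D'$ are uniquely geodesic, and, since the exponential map at each point is a diffeomorphism onto the whole space, geodesics extend uniquely. In particular, for points $y$ and $x$ there is a unique point obtained by prolonging the geodesic from $y$ through $x$ a further Bergman distance $d_B(x,y)$; this is exactly $S_x(y)$, so the definition given just before the lemma is unambiguous.

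First I would record the metric characterisation: a point $z$ equals $S_x(y)$ if and only if
\[
d_B(x,z) = d_B(x,y) \quad\text{and}\quad d_B(y,z) = d_B(y,x) + d_B(x,z).
\]
The second condition says precisely that $x$ lies between $y$ and $z$, i.e.\ on the geodesic segment joining them, while the first makes $x$ the midpoint of that segment; together, in a uniquely geodesic space admitting unique geodesic prolongation, they pin down $z$ uniquely. The betweenness condition also excludes the degenerate solution $z = y$ (unless $x = y$), isolating the genuine reflection.

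Next I would use that $\phi$ preserves $d_B$, and hence preserves the betweenness relation $d_B(a,c) = d_B(a,b) + d_B(b,c)$. Fix $x,y\in D$ and set $z := S_x(y)$. Applying $\phi$ to the two displayed equations and using $d_B(\phi(a),\phi(b)) = d_B(a,b)$ throughout yields
\[
d_B(\phi(x),\phi(z)) = d_B(\phi(x),\phi(y)) \quad\text{and}\quad d_B(\phi(y),\phi(z)) = d_B(\phi(y),\phi(x)) + d_B(\phi(x),\phi(z)).
\]
Thus $\phi(z)$ satisfies, in $D'$, exactly the metric characterisation of the reflection of $\phi(y)$ through $\phi(x)$. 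Since $D'$ is also a Hadamard manifold, that characterisation has a unique solution, namely $S_{\phi(x)}(\phi(y))$. Hence $\phi(S_x(y)) = \phi(z) = S_{\phi(x)}(\phi(y))$, as required.

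The main obstacle is not any single computation but ensuring that the nonpositive-curvature properties are cleanly in hand: one needs both unique geodesics and unique geodesic prolongation in each domain, so that the metric description of $S_x$ is a genuine if-and-only-if with a unique solution, and so that betweenness really means lying on the geodesic. Once these standard facts about Hermitian symmetric spaces of noncompact type are invoked, the rest is a direct transfer of metric relations under the distance-preserving map, using neither the complex nor the Jordan structure.
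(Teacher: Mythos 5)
Your proof is correct and follows essentially the same route as the paper: the paper's (very terse) argument is that $y$, $x$, $S_x(y)$ lie equally spaced along a Bergman geodesic, hence so do their images, and uniqueness of geodesic prolongation in the (Hadamard) target forces the conclusion. You have simply spelled out the metric characterisation of $S_x(y)$ and the uniqueness argument that the paper leaves implicit.
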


\begin{proof}
The points $y$, $x$, and $S_x (y)$ lie equally spaced along a Bergman geodesic
in $D$. Therefore their images $\phi(y)$, $\phi(x)$, and $\phi(S_x (y))$ lie
equally spaced along a Bergman geodesic in $D'$.
The conclusion follows.
\end{proof}

\begin{proof}[Proof of Theorem~\ref{thm:triple_homomorphism}]
By Lemma~\ref{lem:real_linear}, the map $\phi$ is the restriction of a real
linear map, which we also denote by $\phi$, between $V$ and $V'$.
So it agrees with its derivative at the origin, which is a complex linear map
since $\phi$ is assumed to be holomorphic.

Let $x = \lambda_1 e_1 + \dots + \lambda_r e_r$ be in $D$,
with $e_1,\dots,e_r$ a frame of $D$, and each $\lambda_i \in (-1, 1)$.
The norm of $x$ is $||x|| = \max(|\lambda_1|, \dots, |\lambda_r|)$,
and a similar expression holds for the norm in $V'$.
From Lemma~\ref{lem:new_flats_mapped_to_flats}, we get that the norm
is preserved by $\phi$ for elements of $D$.
The same is true for all elements of $V$, by linearity.

Let $z$ be in $D$. By Lemma~\ref{lem:preserves_reflection},
we have $S_{\phi(z)} (\phi(w)) = \phi(S_z (w))$, for all $w\in D$.
This shows that the set $\phi(D)$ is invariant under the point symmetry
$S_{\phi(z)}$, for any $z\in D$.

Let $x$ be in $D$, and let $y$ be the midpoint of $0$ and $x$ along the unique
Bergman geodesic between these two points. So, $S_y (0) = x$.
From Lemma~\ref{lem:preserves_reflection},
we get that $S_{\phi(y)}(0) = \phi(x)$.
By Lemma~\ref{lem:relation_mobius_flip},
this implies that $g_{\phi(x)} = S_{\phi(y)} \after S_0$.
Since it is a composition of maps that
each leave $\phi(D)$ invariant, $g_{\phi(x)}$ also leaves $\phi(D)$ invariant.

The conclusion now follows upon applying Theorem~\ref{thm:chu_mackey}.
\end{proof}

\noindent{\large\bf Declarations} 
\paragraph{\bf Conflict of interest statement} On behalf of all authors, the corresponding author states that there is no
Conflict of interest. 
\paragraph{\bf  Data availability statement}  The manuscript has no associated data. 
\bibliographystyle{plain}
\bibliography{hermitian}

\end{document}